\newdimen\prevdp
\def\leftlabel#1{\noalign{\prevdp=\prevdepth
		\kern-\prevdp\nointerlineskip\vbox to0pt{\vss\hbox{#1}}\kern\prevdp}}
\newcommand {\mre}	{\mathrm{Re}}
\newcommand {\mri}	{\mathrm{Im}}
\newtheorem{example}[theorem]{Example}
\newtheorem{remark}[theorem]{Remark}
\newtheorem{assumption}[theorem]{Assumption}
\newcommand{\sn}{\mathsf{n}}
\newcommand{\sm}{\mathsf{m}}
\newcommand{\scc}{\mathsf{c}}
\newcommand{\sk}{\mathsf{k}}
\newcommand{\sd}{\mathsf{d}}
\newcommand{\stan}{\mathsf{st}}
\newcommand{\EIGOPT}{\textsf{EigOpt}}
\newcommand{\GRANSO}{\textsf{GRANSO}}
\title{Minimization of the Pseudospectral Abscissa of a Quadratic Matrix Polynomial}
\author{Volker Mehrmann\footnotemark[2] \and Emre Mengi\footnotemark[3]}
\date{\today}
\begin{document}
\maketitle

\begin{abstract}
\noindent
For a quadratic matrix polynomial dependent on parameters and a given tolerance $\epsilon > 0$, the minimization of the $\epsilon$-pseudospectral abscissa over the set of
permissible parameter values is discussed, with applications in damping optimization and brake squeal reductions in mind. 
An approach is introduced that is based on nonsmooth and global optimization
(or smooth optimization techniques such as BFGS if there are many parameters)
equipped with a globally convergent criss-cross algorithm to compute the $\epsilon$-pseudospectral
abscissa objective when the matrix polynomial is of small size. 
For the setting when the matrix polynomial is large,  a subspace framework
is introduced, and it is argued formally that
it solves the minimization problem globally. The subspace framework 
restricts the parameter-dependent matrix polynomial to small subspaces, and thus solves
the minimization problem for such restricted small matrix polynomials. It then expands the
subspaces using the minimizers for the restricted polynomials. The proposed approach
makes the global minimization of the $\epsilon$-pseudospectral abscissa possible for a quadratic matrix polynomial dependent on a few parameters and for sizes up to at least a few hundreds. This is illustrated
on several examples originating from damping optimization.
\end{abstract}

\begin{keywords}
quadratic eigenvalue problem, pseudospectral abscissa, damping optimization, robust stability, subspace framework, nonsmooth optimization, global optimization
\end{keywords}

\begin{AMS}
65F15, 93D09, 90C26
\end{AMS}

\footnotetext[2]{
Institut f\"ur Mathematik MA 4-5, TU Berlin, Str. des 17. Juni 136,
D-10623 Berlin, FRG.
\texttt{mehrmann@math.tu-berlin.de}. }
\footnotetext[3]{Ko\c{c} University, Department of Mathematics, Rumeli Feneri Yolu 34450, Sar{\i}yer, Istanbul, Turkey.
\texttt{emengi@ku.edu.tr}.}

\medskip 

\section{Introduction}

We consider a quadratic matrix polynomial that depends on parameters, and,
in particular, we are interested in minimizing the $\epsilon$-pseudospectral
abscissa of such a matrix polynomial over the parameters. Our work is motivated by
applications such as damping optimization \cite{NakVT23,Tom23} 
and the brake squealing problem \cite{GraMQ2015}.
Having a negative $\epsilon$-pseudospectral abscissa after minimization
guarantees robust stability, i.e. the asymptotic stability of not only the associated second-order 
linear time-invariant system, but also all nearby systems.

The \emph{parameter-dependent matrix polynomial} under consideration is of the form
\begin{equation}\label{eq:par_dep_mat_poly}
	P(\lambda; \nu)
		\;	=	\;	\lambda^2 M(\nu) \: + \:  \lambda C(\nu) \: + \:	K(\nu)
\end{equation}
where $\nu \in \Omega \subseteq {\mathbb R}^{\sd}$ represents a vector of parameters,
$\Omega$ is an open subset of ${\mathbb R}^{\sd}$, and the coefficients have the special form
\begin{equation}\label{eq:par_dependent_mats}
	\begin{split}
	&
	M(\nu)	
		\;	=	\;\:	\psi_{1}(\nu) M_{1} +	\dots		+	\psi_{\sm}(\nu)	M_{\sm}	\,	,	\\[.3em]
	&
	C(\nu)	
	\;\:	=	\;\:	\zeta_{1}(\nu) C_{1} +	\dots		+	\zeta_{\scc}(\nu)	 C_{\scc}	\,	,	\\[.3em]
	&
	K(\nu)	
		\;\,	=	\;\:	\kappa_{1}(\nu) K_{1} +	\dots		+	\kappa_{\sk}(\nu)	 K_{\sk}	\,	,
	\end{split}
\end{equation}
for given (parameter-independent) matrices $M_{i}, C_{j}, K_{\ell} \in {\mathbb C}^{\sn\times \sn}$, and real-analytic functions
$\psi_{i}, \zeta_j , \kappa_\ell : \Omega \rightarrow {\mathbb R}$ for $i = 1, \dots , \sm$,
$j = 1, \dots , \scc$, $\: \ell = 1, \dots , \sk$. 
In a context where the dependence
of $P(\lambda; \nu)$ on $\nu$ is not important, we may leave out this explicit dependence on $\nu$, and simply write
\begin{equation}\label{eq:mat_poly}
	P(\lambda) \; = \;  \lambda^2 M + \lambda C + K
\end{equation}	
for matrices $M, C, K \in {\mathbb C}^{\sn\times \sn}$. 

There are slightly different (but closely related) definitions of the \emph{$\epsilon$-pseudospectrum} of the matrix polynomial $P(\lambda)$ 
in (\ref{eq:mat_poly}) in the literature, \cite{Men06,TisH01,TisM01}.
In this work, for computational convenience, we use the definition from
 \cite{Men06}:
\begin{equation}\label{eq:defn_psa}
	\begin{split}
	\Lambda_{\epsilon}(P)
			\;	:= &		\;
	\{	\lambda \in {\mathbb C}	\;	|	\;
			\text{det}(P(\lambda) + \Delta P(\lambda)) = 0 \\[.2em]
& \;\;\;\;\;
\text{ for some } \Delta P(\lambda) = w_m \lambda^2 \Delta M + w_c \lambda \Delta C	+ w_k \Delta K  \\[.2em]
& \;\;\;\;\;
\text{ with } \, \Delta M , \Delta C , \Delta K \in {\mathbb C}^{\sn\times \sn}  \;
			\text{ s.t. } 
				\| \left[
					\begin{array}{ccc}
					\Delta M	&	\Delta C	&	\Delta K
					\end{array}
				\right] \|_2 \leq \epsilon 	
	\}		\;	,
	\end{split}
\end{equation}
with $\| \cdot \|_2$ representing the matrix 2-norm, and
nonnegative real scalars $w_m, w_c, w_k$ that can be regarded as weights.
It follows from the arguments in the proof of \cite[Lemma 8]{GenSVD02}
that $\Lambda_{\epsilon}(P)$ can be characterized as
\begin{equation}\label{eq:char}
	\Lambda_{\epsilon}(P)
		\;	=	\;
	\{
		\lambda \in {\mathbb C}	\;	|	\;
			\sigma_{\min}(P(\lambda))		\leq	\epsilon p_w(| \lambda |)	
	\} 
\end{equation}
where $\sigma_{\min}(A)$ denotes the smallest singular value of the matrix $A$, and	
\begin{equation}\label{eq:scaling_function}
	p_w(z)		\; := \;	 \sqrt{w_m^2 z^4 + w_c^2 z^2 + w_k^2} ,
\end{equation}
as well as $w := (w_m , w_c , w_k)$. 

Extended definitions of the $\epsilon$-pseudospectrum for more general  polynomial eigenvalue problems,
as well as general nonlinear eigenvalue problems are also presented in the literature \cite{Men06,Mic06,TisH01,TisM01}. 

The \emph{$\epsilon$-pseudospectral abscissa} of $P$ is defined as
\begin{equation}\label{eq:defn_ps_abs}
	\begin{split}
	\alpha_{\epsilon}(P)
		\;	&	:=	\;
	\sup \{ \mre(z)	\;	|	\;		z	\in	\Lambda_\epsilon(P)		\}	\\[.3em]
		\;	&	\phantom{:}=	\;
	\sup \{ \mre(z)	\;	|	\;		z	\in	{\mathbb C}	\text{ s.t. }
						\sigma_{\min}(P(z))		\leq	\epsilon p_w(| z |)
			\}.
	\end{split}
\end{equation}
The main problem that we deal with is the minimization of the $\epsilon$-pseudospectral
abscissa of $P(\lambda; \nu)$ in (\ref{eq:par_dep_mat_poly}) over the set of permissible
parameter values, i.e.
\[
	\min\{ \, \alpha_{\epsilon}(P( \, \cdot \, ; \nu))	\;	|	\;	\nu	\in \underline{\Omega} 	\, \}
\]
with $\underline{\Omega} \subseteq \Omega$ denoting a compact set of permissible
parameter values. We propose approaches for both small-scale problems when $\sn$ is small, 
and moderate- to large-scale problems when $\sn$ is not small. Throughout this work,
we keep the following boundedness assumption.
\begin{assumption}\label{ass:bounded_ps}
$\Lambda_{\epsilon}(P( \, \cdot \, ; \nu))$ is bounded for all $\nu \in \underline{\Omega}$.
\end{assumption}
Under this boundedness assumption, a rightmost point in $\Lambda_{\epsilon}(P( \, \cdot \, ; \nu))$
must exist, i.e. the supremum in the definition of $\alpha_{\epsilon}(P( \, \cdot \, ; \nu))$
(see (\ref{eq:defn_ps_abs}) above) must be attained, so it can be replaced by the maximum 
for all $\nu \in \underline{\Omega}$.

The ideas and approaches presented in this paper can be extended in a straightforward way to a parameter-dependent
nonlinear eigenvalue problem when $P(\lambda; \nu)$ in (\ref{eq:par_dep_mat_poly})
is replaced by a more general matrix-valued function
\begin{equation}\label{eq:NLEVP}
	T(\lambda; \nu)
		\;	=	\;	g_0(\lambda) A_0(\nu)  \: + \:  g_1(\lambda) A_1(\nu)	\: + \: 	\dots	 \: + \:
						g_\ell(\lambda) A_\ell(\nu)
\end{equation}
for given meromorphic functions $g_j : {\mathcal D} \rightarrow {\mathbb C}$ for $j = 0, 1, \dots, \ell$ 
with a domain ${\mathcal D}$ that is a dense subset of ${\mathbb C}$.
The polynomial eigenvalue problem is a special case of (\ref{eq:NLEVP})
with $g_j(\lambda) = \lambda^j$ for $j = 0, 1, \dots , \ell$.

To have a concise presentation, we restrict ourselves to quadratic matrix polynomials 
with $g_j(\lambda) = \lambda^j$ for $j = 0, 1, 2$.
The outcome of our work is an efficient approach to optimize the robust stability 
of a parameter-dependent quadratic eigenvalue problem, viewed as an important
challenging problem for instance by the damping optimization community \cite{Nak02,NakTT13,Tom23}.
The new approach is designed systematically, especially well-suited for large problems, 
and has theoretical global convergence guarantee.

The current work can be regarded as a generalization of \cite{AliM22} that concerns
the minimization of the $\epsilon$-pseudospectral abscissa of a matrix dependent
on parameters. We extend the analytical properties of the $\epsilon$-pseudospectral
abscissa functions from the matrix setting to the quadratic matrix polynomial setting.
The algorithms are built on top of these analytical properties, and implementing them
requires considerable effort and involves many subtleties.


Our method comes with publicly available MATLAB implementations \cite{MehM24} 
of the proposed algorithms. We illustrate the effectiveness of the proposed algorithms 
in practice by applying them 
to real damping optimization problems.

\medskip

\noindent
\textbf{Outline.} In Section~\ref{sec:small_probs} we propose algorithms for small problems
involving coefficients of small dimension $\sn$.  We first describe a
globally convergent criss-cross algorithm to compute the $\epsilon$-pseudospectral abscissa 
and the rightmost point in the $\epsilon$-pseudospectrum. 
Then, we discuss how $\alpha_{\epsilon}(P(\cdot ; \nu))$ can be minimized in the small-scale
setting, where we present conditions ensuring the real analyticity of $\alpha_{\epsilon}(P(\cdot ; \nu))$,
and derive expressions for the derivatives of $\alpha_{\epsilon}(P(\cdot ; \nu))$.

Section~\ref{sec:large_compute} is devoted to large-scale computation of the
$\epsilon$-pseudospectral abscissa and the corresponding rightmost point,
in particular to the development of a subspace framework for this purpose when the dimension of the matrix polynomial is not small. 

Section~\ref{sec:large_minimize} focuses on
the minimization of $\alpha_{\epsilon}(P(\cdot ; \nu))$ in the case that the parameter-dependent
matrix polynomial at hand is not of small dimension. A second subspace framework for minimizing
$\alpha_{\epsilon}(P(\cdot ; \nu))$ is introduced, and a formal argument is given
in support of its convergence to a global minimizer of $\alpha_{\epsilon}(P(\cdot ; \nu))$.

Section~\ref{sec:apply_damping_opt} concerns the applications of the proposed approaches
for $\epsilon$-pseudospectral abscissa minimization to several examples 
originating from damping optimization. An alternative to directly minimizing
$\alpha_{\epsilon}(P(\cdot ; \nu))$ over $\nu$ is minimizing the $\epsilon$-pseudospectral
abscissa of a linearization of $P(\cdot ; \nu)$ over $\nu$. We provide a comparison of
these two $\epsilon$-pseudospectral abscissa minimization problems 
in Section \ref{sec:psa_linearizations}, and an example for which the minimizers of
these two minimization problems differ considerably.

We summarize our results, and potential directions for future research in Section~\ref{sec:conclusion}.

\section{Small-scale problems}\label{sec:small_probs}
In this section, we deal with a matrix polynomial that has small size,
and describe approaches for the computation and minimization
of its $\epsilon$-pseudospectral abscissa.
\subsection{Computation of the $\epsilon$-pseudospectral abscissa for small-scale problems}
Inspired by the ideas in \cite{BurLO05} for the matrix case, in \cite{Men06} a criss-cross algorithm is 
presented to compute the $\epsilon$-pseudospectral abscissa $\alpha_{\epsilon}(P)$ for a small-scale
matrix polynomial $P(\lambda)$. We briefly recall this algorithm starting with its
two main ingredients, namely a vertical search and a horizontal search.

The vertical search for a given $x \in {\mathbb R}$, stated below as a theorem, is helpful
to locate the intersection points of the boundary of $\Lambda_{\epsilon}(P)$ in the complex plane
with the vertical line $\mre(z) = x$.
\begin{theorem}[Vertical Search \cite{Men06}]\label{thm:vert_src}
For every $x \in {\mathbb R}$, the following statements are equivalent:
\begin{enumerate}
\item[\bf (i)]
One of the singular values of $\, \{ 1/  p_w( | x + {\rm i} y |) \} P(x + {\rm i} y)$ for $y \in {\mathbb R}$
equals $\epsilon$.
\item[\bf (ii)]
The matrix polynomial ${\mathcal M}(\lambda; x) = \sum_{j=0}^{4} \lambda^j {\mathcal M}_j(x)$
with
\begin{equation*}
	{\mathcal M}_j(x)
		\;	:=		\;
	\left\{
		\begin{array}{cl}
		\left[
			\begin{array}{cc}
				-\epsilon p^2_w(x)	I	&	\; [P(x)]^\ast		\\[.25em]
				P(x)					&	-\epsilon I
			\end{array}
		\right]	&	\quad			\text{if } 	j	=	0	\;	,		\\[1.3em]
		\left[
			\begin{array}{cc}
				0				&	\; -[ P'(x) ]^\ast	\\[.25em]
				P'(x)				&	\; 0
			\end{array}
		\right]	&	\quad			\text{if } 	j	=	1	\;	,	\\[1.6em]
		\left[
			\begin{array}{cc}
			\epsilon (2 w_m^2 x^2		+	w_c^2)  I		&	\; M^\ast	\\[.25em]
			M				&	0
			\end{array}
		\right]	&	\quad			\text{if } 	j	=	2	\;	,		\\[1.9em]
		0		&	\quad			\text{if } 	j	=	3	\;	,		\\[1.3em]
		\left[
			\begin{array}{cc}
				-\epsilon w_m^2 I				&	\; 0		\\
					0			&	\; 0
			\end{array}
		\right]	&	\quad			\text{if } 	j	=	4	\;
		\end{array}
	\right.
\end{equation*}
has ${\rm i} y$ as an eigenvalue.
\end{enumerate} 
\end{theorem}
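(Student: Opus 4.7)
The plan is to verify the equivalence by explicitly computing the matrix polynomial $\mathcal{M}(iy;x)$ at a point $\lambda = iy$ on the imaginary axis, recognize it as a $2\sn \times 2\sn$ Hermitian block matrix whose singularity encodes the singular value condition in (i), and then conclude using a Schur complement argument.

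First I would observe the shift identity
\[
    P(x + \lambda) \;=\; \lambda^2 M + \lambda \, P'(x) + P(x),
\]
where $P'(x) = 2xM + C$, so that specializing to $\lambda = iy$ gives
\[
    P(x+iy) \;=\; -y^2 M + iy\, P'(x) + P(x),
    \qquad
    [P(x+iy)]^{\ast} \;=\; -y^2 M^{\ast} - iy\, [P'(x)]^{\ast} + [P(x)]^{\ast}.
\]
The second identity uses that $\overline{iy} = -iy$ and $y \in \mathbb{R}$. These two expressions are exactly what one should expect to appear as the off-diagonal blocks of $\mathcal{M}(iy;x)$.

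Next I would substitute $\lambda = iy$ into $\mathcal{M}(\lambda;x) = \sum_{j=0}^{4} \lambda^j \mathcal{M}_j(x)$ and read off each block. The off-diagonal blocks collect precisely the terms above, giving $[P(x+iy)]^{\ast}$ in the top right and $P(x+iy)$ in the bottom left. The bottom-right block is simply $-\epsilon I$ since only $\mathcal{M}_0$ contributes there. For the top-left block one collects the scalar multiples of $I$ from $\mathcal{M}_0, \mathcal{M}_2, \mathcal{M}_4$, obtaining
\[
    -\epsilon \bigl[\, w_m^2 x^4 + w_c^2 x^2 + w_k^2 \;+\; y^2(2 w_m^2 x^2 + w_c^2) \;+\; w_m^2 y^4 \,\bigr] I
    \;=\; -\epsilon \bigl[ w_m^2 (x^2+y^2)^2 + w_c^2 (x^2+y^2) + w_k^2 \bigr] I,
\]
which by (\ref{eq:scaling_function}) equals $-\epsilon\, p_w^2(|x+iy|)\, I$. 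Thus
\[
    \mathcal{M}(iy;x)
    \;=\;
    \begin{bmatrix}
        -\epsilon\, p_w^2(|x+iy|)\, I  &  [P(x+iy)]^{\ast}  \\[.2em]
        P(x+iy)                        &  -\epsilon I
    \end{bmatrix}.
\]

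Finally, assuming $\epsilon > 0$ so that the bottom-right block is invertible, the Schur complement formula yields
\[
    \det \mathcal{M}(iy;x)
    \;=\;
    (-\epsilon)^{\sn} \det\!\Bigl( -\epsilon\, p_w^2(|x+iy|)\, I + \tfrac{1}{\epsilon}\,[P(x+iy)]^{\ast} P(x+iy) \Bigr)
    \;=\;
    (-1)^{\sn} \det\!\Bigl( [P(x+iy)]^{\ast} P(x+iy) - \epsilon^2 p_w^2(|x+iy|)\, I \Bigr).
\]
This vanishes if and only if $\epsilon^2 p_w^2(|x+iy|)$ is an eigenvalue of $[P(x+iy)]^{\ast} P(x+iy)$, i.e.\ iff $\epsilon\, p_w(|x+iy|)$ is a singular value of $P(x+iy)$, which is equivalent to (i). The only mildly delicate points are bookkeeping the coefficient of $I$ in the top-left block and handling the boundary case $p_w(|x+iy|) = 0$; the latter cannot occur when $w_k > 0$, and otherwise can be treated by noting that the determinant identity above is a polynomial identity in $y$ valid regardless.
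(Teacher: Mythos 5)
Your computation is correct and complete: substituting $\lambda = {\rm i} y$ does reproduce the Hermitian block matrix $\left[\begin{smallmatrix} -\epsilon p_w^2(|x+{\rm i}y|) I & [P(x+{\rm i}y)]^\ast \\ P(x+{\rm i}y) & -\epsilon I \end{smallmatrix}\right]$, and the Schur complement with the invertible block $-\epsilon I$ (valid since $\epsilon>0$) shows its singularity is equivalent to $\epsilon\, p_w(|x+{\rm i}y|)$ being a singular value of $P(x+{\rm i}y)$, which is statement (i). The paper itself does not prove this theorem but cites it from \cite{Men06}, where the argument runs in the same spirit (encoding the weighted singular value condition in this block matrix and expanding in powers of $\lambda$ along the vertical line), so your proposal is essentially the standard route, with only the harmless caveats you already note about $p_w = 0$ and the implicit assumption that $\det {\mathcal M}(\lambda;x)$ does not vanish identically in $\lambda$.
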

According to Theorem~\ref{thm:vert_src}, the set of intersection points of the boundary of $\Lambda_{\epsilon}(P)$
with the vertical line $\mre(z) = x$ can be obtained by extracting the purely imaginary
eigenvalues ${\rm i} y_1, \dots {\rm i} y_{\eta(x)}$ of ${\mathcal M}(\lambda; x)$,
then checking whether
\[
	 \sigma_{\min}(P(x + {\rm i} y_j))
	 	\;	=	\;	\epsilon \left\{ p_w( | x + {\rm i} y_j | ) \right\}
\]
for $j = 1, \dots , \eta(x)$.

We next state, as a theorem, the horizontal search, which concerns finding the
rightmost intersection point of the $\epsilon$-pseudospectrum with a horizontal line
in the complex plane.
\begin{theorem}[Horizontal Search \cite{Men06}]\label{thm:horiz_src}
For a given $y \in {\mathbb R}$, if the set 
\[
	{\mathcal H}(y)
		\;	:=			\;
	 \Lambda_{\epsilon}(P)  \cap  \left\{ \mri(z) = y \right\}
\]
is not empty, then the matrix polynomial
${\mathcal N}(\lambda; y) = \sum_{j=0}^{4} \lambda^j {\mathcal N}_j(y)$
with
\begin{equation*}
	{\mathcal N}_j(y)
		\;	:=		\;
	\left\{
		\begin{array}{cl}
		\left[
			\begin{array}{cc}
				-\epsilon p^2_w(y)	I	&	\;\; [P({\rm i} y)]^\ast		\\[.25em]
				P({\rm i} y)					&	-\epsilon I
			\end{array}
		\right]	&	\quad			\text{if } 	j	=	0	\;	,		\\[1.3em]
		\left[
			\begin{array}{cc}
				0				&	\; -{\rm i} [ P'({\rm i} y) ]^\ast	\\[.25em]
				-{\rm i} P'({\rm i} y)				&	\;\; 0
			\end{array}
		\right]	&	\quad			\text{if } 	j	=	1	\;	,		\\[1.6em]
		\left[
			\begin{array}{cc}
	\epsilon (2 w_m^2 y^2		+	w_c^2)  I				&	\; -M^\ast	\\[.25em]
	-M				&	\;\; 0
			\end{array}
		\right]	&	\quad			\text{if } 	j	=	2	\;	,		\\[1.9em]
		0		&	\quad			\text{if } 	j	=	3	\;	,		\\[1.3em]
		\left[
			\begin{array}{cc}
				-\epsilon w_m^2 I				&	\; 0		\\
					0			&	\; 0
			\end{array}
		\right]	&	\quad			\text{if } 	j	=	4
		\end{array}
	\right.	
\end{equation*}
has a purely imaginary eigenvalue. Moreover, if ${\mathcal H}(y)$ is not empty, then
\begin{equation}\label{eq:right_psa}
	\max \, \mre\{ {\mathcal H}(y) \}
		\;	=	\;
	\max\{ x	\;	|	\;	x \in {\mathbb R} \, \text{ s.t. } \,
							{\rm i} x  \in \Lambda({\mathcal N}(\lambda; y))	\}	
\end{equation}
where $\Lambda({\mathcal N}(\lambda; y))$ denotes the set of finite eigenvalues of 
the matrix polynomial ${\mathcal N}(\lambda; y)$.
\end{theorem}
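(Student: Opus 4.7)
The plan is to establish the pointwise identity
\[
    {\mathcal N}(\ri x;\, y) \;=\; \begin{bmatrix} -\epsilon\, p_w(|x+\ri y|)^2\, I & P(x+\ri y)^\ast \\[0.2em] P(x+\ri y) & -\epsilon I \end{bmatrix}\qquad\text{for every } x\in\mathbb R,
\]
then use a Schur-complement computation to reduce its singularity to the pseudospectral condition $\sigma_j(P(x+\ri y)) = \epsilon\, p_w(|x+\ri y|)$, and close with the standard observation that under Assumption~\ref{ass:bounded_ps} the rightmost point of ${\mathcal H}(y)$ must lie on $\partial\Lambda_{\epsilon}(P)$. The overall structure mirrors the argument behind the vertical search (Theorem~\ref{thm:vert_src}), with the substitution $\lambda \mapsto \ri x$ replacing $\lambda \mapsto \ri y$.

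The identity is verified block by block using $(\ri x)^2 = -x^2$ and $(\ri x)^4 = x^4$. Since $P$ is quadratic, a Taylor expansion at $\ri y$ gives $P(x+\ri y) = M x^2 + P'(\ri y)\,x + P(\ri y)$ with $P'(\ri y) = 2\ri y M + C$, and the $(2,1)$-block of ${\mathcal N}(\ri x; y)$ evaluates to $P(\ri y) + \ri x\bigl(-\ri P'(\ri y)\bigr) + (\ri x)^2(-M) = x^2 M + x P'(\ri y) + P(\ri y) = P(x+\ri y)$; the same computation on conjugate transposes yields the $(1,2)$-block as $P(x+\ri y)^\ast$. For the $(1,1)$-block I would expand $p_w(|x+\ri y|)^2 = w_m^2 x^4 + (2 w_m^2 y^2 + w_c^2) x^2 + p_w^2(y)$ and check that $-\epsilon w_m^2 (\ri x)^4 + \epsilon(2 w_m^2 y^2 + w_c^2)(\ri x)^2 - \epsilon p_w^2(y) = -\epsilon\, p_w(|x+\ri y|)^2$. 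The absent $\lambda^3$ contribution matches ${\mathcal N}_3 = 0$, and the $(2,2)$-block is immediate.

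Next I would apply the Schur complement to the $-\epsilon I$ bottom-right block to obtain
\[
    \det {\mathcal N}(\ri x; y) \;=\; (-1)^{\sn}\, \det\!\bigl(P(x+\ri y)^\ast P(x+\ri y) - \epsilon^2\, p_w(|x+\ri y|)^2\, I\bigr),
\]
so $\ri x$ is a finite eigenvalue of ${\mathcal N}(\lambda; y)$ if and only if some $\sigma_j(P(x+\ri y))$ equals $\epsilon\, p_w(|x+\ri y|)$. Any such $x$ satisfies $\sigma_{\min}(P(x+\ri y)) \leq \epsilon\, p_w(|x+\ri y|)$, hence $x+\ri y \in {\mathcal H}(y)$, which gives the inequality $\max\{x : \ri x \in \Lambda({\mathcal N}(\lambda;y))\} \leq \max\mre\{{\mathcal H}(y)\}$. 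For the reverse inequality, Assumption~\ref{ass:bounded_ps} makes $\Lambda_{\epsilon}(P)$, and hence ${\mathcal H}(y)$, compact, so the supremum in $\mre\{{\mathcal H}(y)\}$ is attained at some $x^\ast + \ri y$; this point must lie on $\partial\Lambda_{\epsilon}(P)$, since otherwise a small rightward shift would remain in $\Lambda_{\epsilon}(P)$, contradicting maximality. The boundary equality $\sigma_{\min}(P(x^\ast + \ri y)) = \epsilon\, p_w(|x^\ast+\ri y|)$ then forces $\ri x^\ast$ to be a purely imaginary finite eigenvalue of ${\mathcal N}(\lambda; y)$, establishing both the existence claim and the equality in (\ref{eq:right_psa}).

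The main obstacle is the block-by-block algebra in the first step: the sign conventions in ${\mathcal N}_1$ (with $\ri$-factors in both off-diagonal blocks) and the scalar coefficient $\epsilon(2w_m^2 y^2 + w_c^2)$ in the $(1,1)$-block of ${\mathcal N}_2$ must be carefully reconciled with the powers $(\ri x)^2 = -x^2$ and $(\ri x)^4 = x^4$; a single dropped sign or misplaced $\ri$ would break the identification. Once the identity is in hand, the Schur-complement and rightmost-point-on-boundary arguments are routine and parallel those underlying Theorem~\ref{thm:vert_src}.
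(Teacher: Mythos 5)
Your proof is correct: the identity ${\mathcal N}(\ri x;y)$ equals the bordered matrix built from $P(x+\ri y)$, the Schur-complement reduction to $\det\bigl(P(x+\ri y)^\ast P(x+\ri y)-\epsilon^2 p_w^2(|x+\ri y|)I\bigr)=0$, and the continuity argument forcing $\sigma_{\min}=\epsilon p_w$ at the rightmost point of ${\mathcal H}(y)$ all check out. The paper itself gives no proof (the theorem is quoted from \cite{Men06}), and your argument is essentially the standard one used there and for the vertical search, so there is nothing substantive to add beyond noting the implicit assumption that $\det{\mathcal N}(\cdot;y)$ is not identically zero so that its purely imaginary roots form a finite set.
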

According to Theorem~\ref{thm:horiz_src}, the rightmost intersection point of $\Lambda_{\epsilon}(P)$
with the horizontal line $\mri(z) = y$ in the complex plane can be determined from the 
set of purely imaginary eigenvalues of ${\mathcal N}(\lambda; y)$ by employing (\ref{eq:right_psa}).

The criss-cross algorithm to compute $\alpha_{\epsilon}(P)$ at every iteration
for a given estimate $\underline{x}$ first finds the intervals on the line $\mre(z) = \underline{x}$
that lie in $\Lambda_{\epsilon}(P)$ by means of a vertical search. Then a horizontal
search is performed at the midpoint of each such interval, and the estimate $\underline{x}$
is updated to the real part of the rightmost point in $\Lambda_{\epsilon}(P)$
obtained from these horizontal searches. A formal description of the approach 
is given in Algorithm \ref{alg:criss-cross}. 
\begin{algorithm}
\begin{algorithmic}[1]
	\REQUIRE{A quadratic matrix polynomial $P(\lambda)$ of the form (\ref{eq:mat_poly}),
				and $\epsilon > 0$.}
	\ENSURE{Estimates $\underline{x}$ for $\alpha_{\epsilon}(P)$, and $\underline{z} \in {\mathbb C}$
				for a rightmost point in $\Lambda_{\epsilon}(P)$.}	
	\vskip 1.2ex
	\STATE{$\underline{z} \gets$ a rightmost eigenvalue of $P(\lambda)$.}\label{alg:init_points2}
	\vskip .4ex
	\STATE{$\underline{x}^{(0)} \gets$ $\text{Re}(\underline{z})$.}\label{alg:init_points}
	\vskip 1.2ex
	\FOR{$k=1,2,\dots$}
	\vskip 1.2ex
	\STATE{\textbf{Vertical Scan.} \\
		Determine the intervals $[y^L_j , y^R_j]$ for $j = 1, \dots , l$
	such that
	\begin{equation}\label{eq:vs}
		\bigcup_{j=1}^{l} [y^L_j , y^R_j]
			\;	=	\;	
		 \mri\left\{ \Lambda_{\epsilon}(P)  \cap \left\{ \mre(z) = \underline{x}^{(k-1)} \right\} \right\}
	\end{equation}
	by means of a vertical search at $\underline{x}^{(k-1)}$.
	}\label{line:vscan} 
	\vskip 1.2ex
	\STATE{\textbf{Horizontal Refinement.} \\
	Letting $y^{\mathrm{mid}}_j := (y^L_j + y^R_j)/2$, set
		\begin{equation}\label{eq:hs}
			\underline{x}^{(k)} \gets 
			\max_{j = 1, \dots , l} 
			\max \left\{ 
			\mre \left\{ \Lambda_{\epsilon}(P)  \cap \left\{ \mri(z) = y^{\mathrm{mid}}_j \right\} \right\}
			\right\}
		\end{equation}
	by means of horizontal searches at $y^{\mathrm{mid}}_j$
	for $j = 1, \dots , l$, and set \\[.2em]
	$	\,
		\underline{z} \gets \underline{x}^{(k)} + {\rm i} \, y^{\mathrm{mid}}_{\underline{j}}	\,
	$
	with $\underline{j}$ denoting the maximizing $j$ in (\ref{eq:hs}).}\label{line:href}
	\vskip 1.9ex
	\STATE{\textbf{Return} $\underline{x} \gets \text{Re}(\underline{z}) \:$ and
	$\, \underline{z} \;$  if convergence occurred.} \label{alg:terminate}
	\vskip 1.5ex
	\ENDFOR
\end{algorithmic}
\caption{The criss-cross algorithm to compute $\alpha_{\epsilon}(P)$}
\label{alg:criss-cross}
\end{algorithm}

An example of the progress of Algorithm \ref{alg:criss-cross} is presented
for a $4\times 4$ quadratic matrix polynomial in Figure \ref{fig:criss-cross_illustrate}. In this example, 
the algorithm first finds the intersection points of the line $\mre (z) = \alpha(P)$ with the 
boundary of $\Lambda_{\epsilon}(P)$ by means of a vertical search,
where $\alpha(P)$ denotes the spectral abscissa of $P$, i.e. the real part of
the rightmost eigenvalue of $P$ (see the left-hand plot). 
Using these intersection points, it determines the three intervals on the line $\mre (z) = \alpha(P)$ 
that lie inside $\Lambda_{\epsilon}(P)$, and performs a horizontal search at the midpoint 
of each one of these three intervals (see the middle plot). 
Then the algorithm performs another vertical search to find the intersection points of the 
boundary of $\Lambda_{\epsilon}(P)$ with the line $\mre (z) = \underline{x}$, where $\underline{x}$ is 
the real part of the rightmost point returned by the three horizontal searches 
(see the right-hand plot).

\begin{figure}
 \centering
	\begin{tabular}{ccc}
		\hskip -6ex
			\includegraphics[width = .37\textwidth]{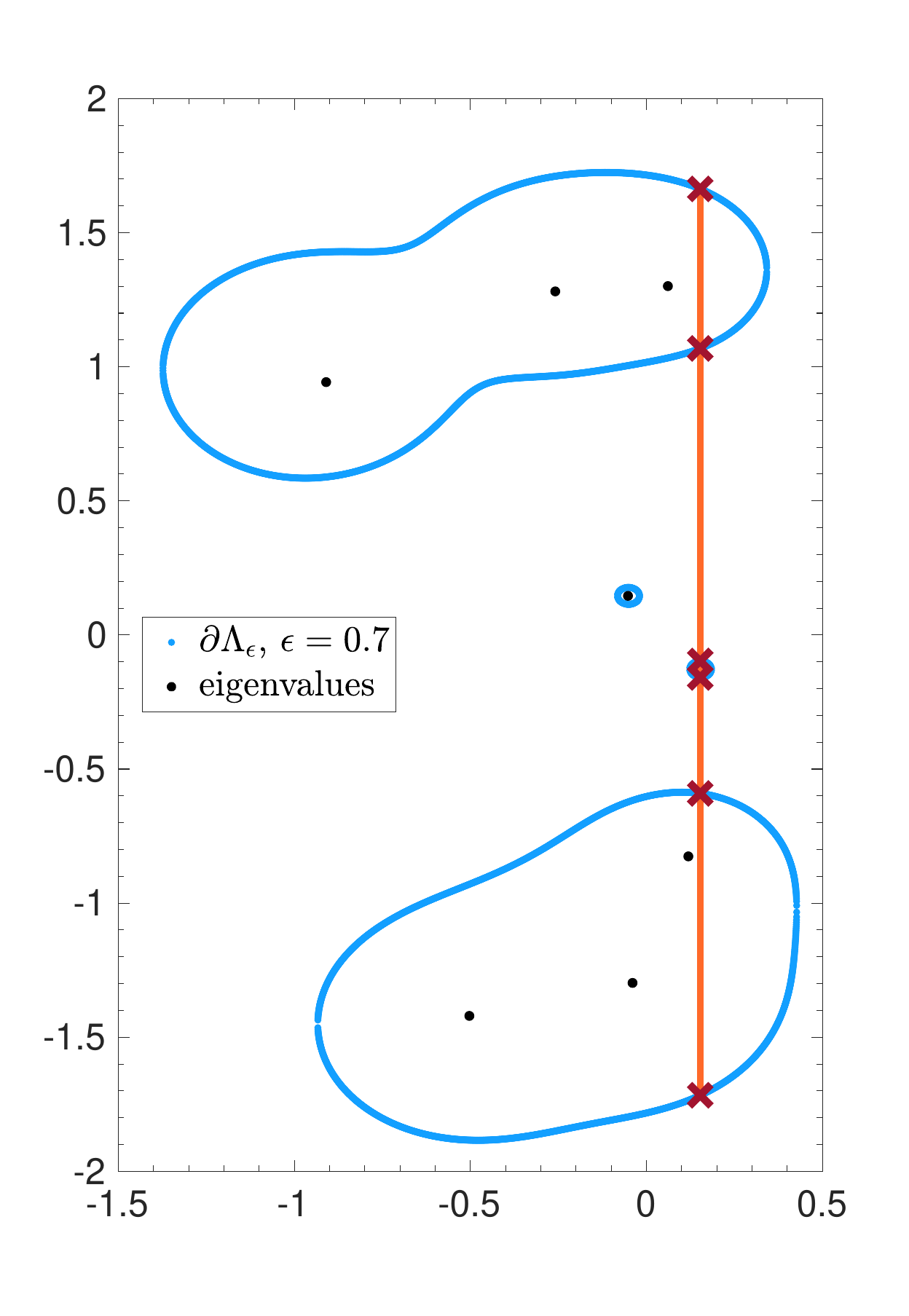} & 
			\hskip -2ex
			\includegraphics[width = .37\textwidth]{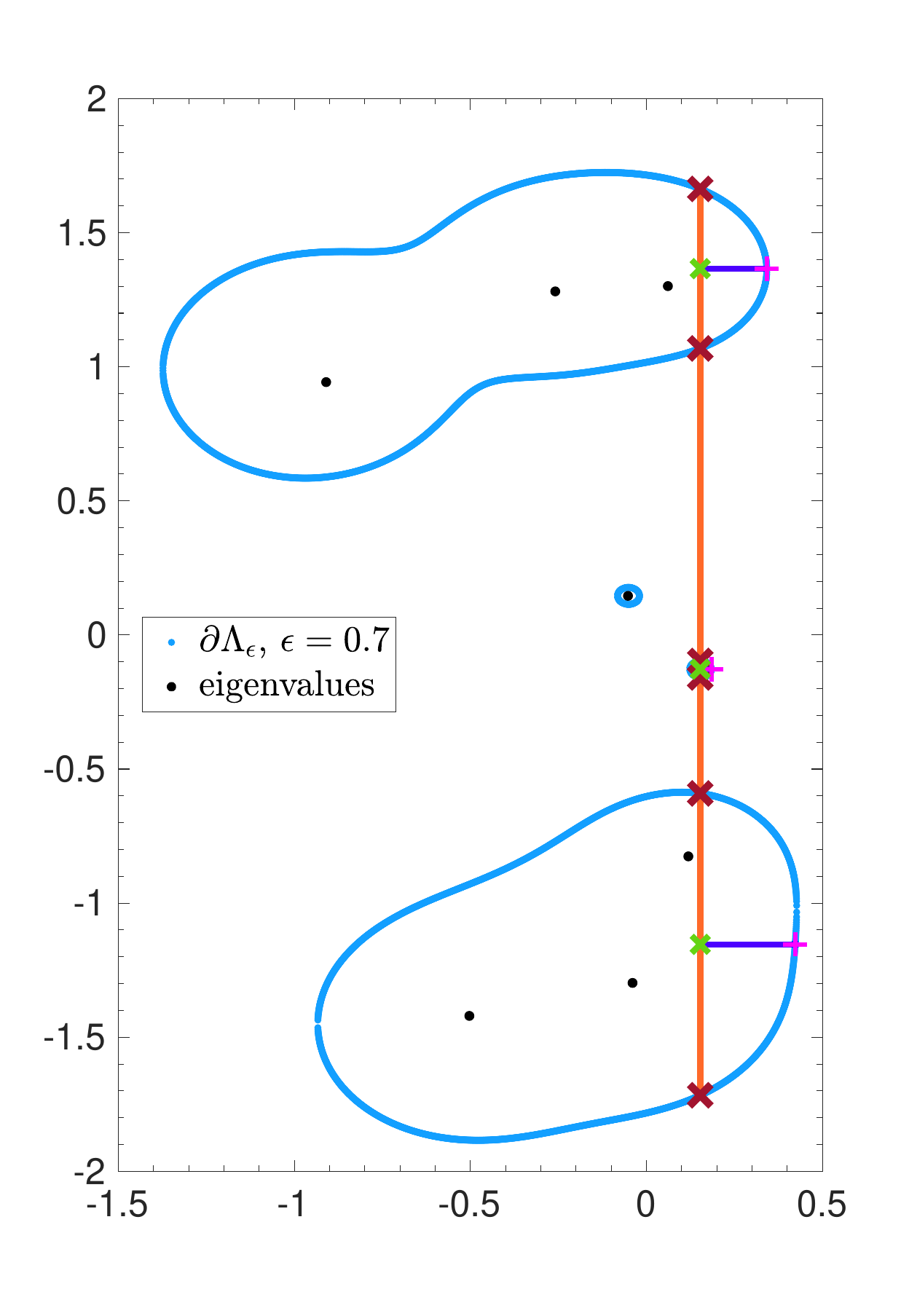} &
			\hskip -2ex
			\includegraphics[width = .37\textwidth]{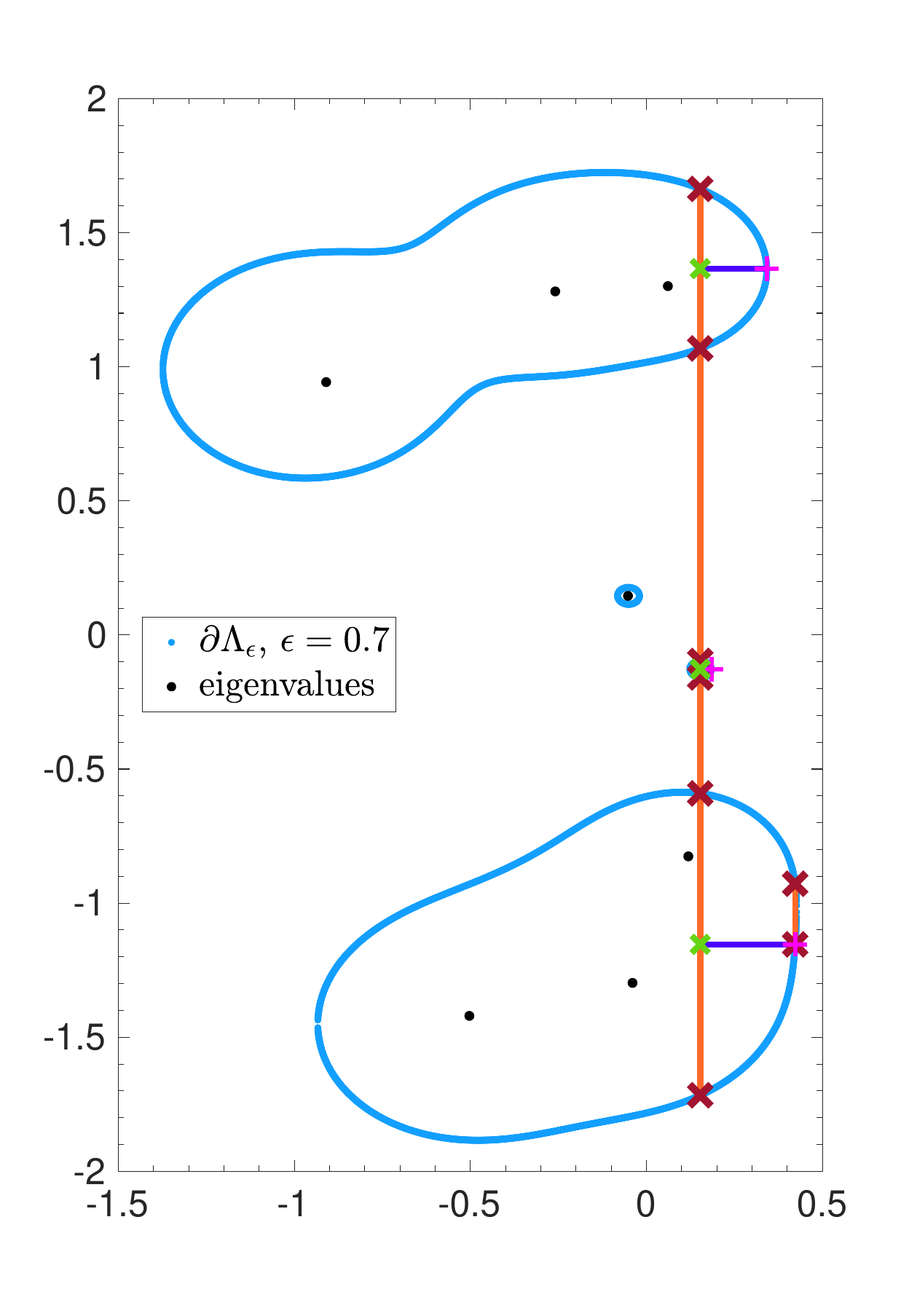}
	\end{tabular}
		\caption{  Progress of Algorithm \ref{alg:criss-cross} 
		for a $4\times 4$ quadratic matrix polynomial. The blue curves
		correspond to the boundary $\partial \Lambda_{\epsilon}$ of the
		$\epsilon$-pseudospectrum of the quadratic matrix polynomial 
		for $\epsilon = 0.7$.
		}
		\label{fig:criss-cross_illustrate}
\end{figure}

\textbf{Convergence.}
It has been shown in \cite{BurLO05} that the estimate $x^{(k)}$ 
generated by Algorithm~\ref{alg:criss-cross}
in the matrix case converges to the $\epsilon$-pseudospectral 
abscissa as $k \rightarrow \infty$, and the rate of this convergence is at least quadratic. 
These results extend to Algorithm \ref{alg:criss-cross}, that is to the
criss-cross algorithm for the more general quadratic matrix 
polynomial setting. The proofs of the convergence results 
stated below for Algorithm \ref{alg:criss-cross} are similar to their 
counterparts in the matrix setting,
in particular to \cite[Theorem 3.2]{BurLO05} and \cite[Theorem 5.2]{BurLO05}, 
but by replacing the singular value function $h(x,y)$ in there with
\begin{equation}\label{eq;defn_g}
	g(x,y) \; := \; \sigma_{\min}(P(x + {\rm i} y)) - \epsilon p_w(\sqrt{x^2 + y^2}) .
\end{equation}
These results are stated assuming there is no termination, i.e. 
for Algorithm \ref{alg:criss-cross} without line \ref{alg:terminate}.
\noindent
\begin{theorem}[Global Convergence]
The sequence $\{ x^{(k)} \}$ generated by Algorithm \ref{alg:criss-cross} is monotonically
non-decreasing and satisfies $\: \lim_{k\rightarrow \infty} x^{(k)} = \alpha_{\epsilon}(P)$.
\end{theorem}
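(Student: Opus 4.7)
The plan is to mirror the matrix-case argument of \cite[Theorem 3.2]{BurLO05} with the function $g(x,y)$ from \eqref{eq;defn_g} replacing the singular-value gap used there. Since $\Lambda_\epsilon(P) = \{ z \in \mathbb{C} : g(\mre(z),\mri(z)) \leq 0 \}$ and $g$ is continuous (and real-analytic away from points where $\sigma_{\min}(P(\cdot))$ fails to be simple), the proof decomposes into an easy monotonicity-and-boundedness part and a harder limit-point analysis at the accumulation point of the iterates.

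For the easy part, I would observe that each midpoint $\underline{x}^{(k-1)} + \mathrm{i}\, y_j^{\mathrm{mid}}$ lies in $\Lambda_\epsilon(P)$ because it is the midpoint of an interval $[y_j^L, y_j^R]$ which, by construction from the vertical scan, is entirely contained in $\Lambda_\epsilon(P) \cap \{\mre(z) = \underline{x}^{(k-1)}\}$. The horizontal search from $y_j^{\mathrm{mid}}$ therefore returns a point whose real part is at least $\underline{x}^{(k-1)}$, and taking the maximum over $j$ yields $\underline{x}^{(k)} \geq \underline{x}^{(k-1)}$. Since $\underline{x}^{(k)}$ is the real part of some point in $\Lambda_\epsilon(P)$, it is bounded above by $\alpha_\epsilon(P)$. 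The monotone bounded sequence then converges to some $x^\ast \leq \alpha_\epsilon(P)$.

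To finish, I would argue by contradiction that $x^\ast = \alpha_\epsilon(P)$. The number of intervals produced by the vertical scan at each iteration is uniformly bounded by the degree of $\mathcal{M}(\lambda;x)$ in Theorem~\ref{thm:vert_src}, and by Assumption~\ref{ass:bounded_ps} the midpoints stay in a bounded region. A subsequence of the selected midpoints $y_{j_k}^{\mathrm{mid}}$ therefore converges to some $y^\ast$, with $z^\ast := x^\ast + \mathrm{i}\, y^\ast \in \Lambda_\epsilon(P)$. If $z^\ast$ were interior, continuity of $g$ would force the horizontal search at $y_{j_k}^{\mathrm{mid}}$ to return a real part bounded strictly above $x^\ast$ for large $k$, contradicting $\underline{x}^{(k)} \to x^\ast$. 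Hence $z^\ast \in \partial\Lambda_\epsilon(P)$, and the interval widths $y_{j_k}^R - y_{j_k}^L$ must shrink to zero so that the line $\mre(z) = x^\ast$ touches $\partial\Lambda_\epsilon(P)$ tangentially at $z^\ast$. The main obstacle is closing the argument from this tangency to a global statement: as in \cite[Theorem 3.2]{BurLO05}, one uses real analyticity of $g$ near $z^\ast$ together with Theorem~\ref{thm:horiz_src} (which returns the \emph{globally} rightmost point on the horizontal line, not merely a local one) to exclude the possibility that another component of $\Lambda_\epsilon(P)$ lies to the right of $\mre(z) = x^\ast$. The matrix-case analysis transfers essentially verbatim because $p_w$ is real-analytic and strictly positive, so $g$ inherits the same level-set regularity as the matrix-case gap function, and one concludes $x^\ast = \alpha_\epsilon(P)$.
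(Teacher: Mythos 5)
Your overall plan coincides with the paper's own treatment: the paper does not spell out a proof either, but simply asserts that the argument of \cite[Theorem 3.2]{BurLO05} carries over with $h(x,y)$ replaced by $g(x,y)$ from (\ref{eq;defn_g}). Your monotonicity/boundedness half is correct (each midpoint lies in $\Lambda_\epsilon(P)$, so every horizontal search returns at least $\underline{x}^{(k-1)}$, and the values are bounded by $\alpha_\epsilon(P)$), and your interior-point contradiction at the accumulation point is also sound. The genuine gap is in how you close the hard direction. First, the claim that the interval widths $y^R_{j_k}-y^L_{j_k}$ ``must shrink to zero'' does not follow from $z^\ast\in\partial\Lambda_\epsilon(P)$ and is in general false: the limiting cross-section interval can have positive length while its midpoint happens to be a boundary point of $\Lambda_\epsilon(P)$ (think of two overlapping $\epsilon$-disks centred at a conjugate pair of eigenvalues; by symmetry the merged interval's midpoint sits exactly at the tangential boundary point on the real axis, and the widths stay bounded away from zero). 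Second, invoking ``real analyticity of $g$ near $z^\ast$'' is not available without assuming $\sigma_{\min}(P(z^\ast))$ is simple --- an assumption made only in the quadratic-rate theorem, not here --- and it is also not what the matrix-case global-convergence proof uses; analyticity enters only the rate analysis.

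What actually rules out stalling at such a tangential configuration, and what your sketch is missing, is twofold: (i) the vertical search of Theorem~\ref{thm:vert_src} returns \emph{every} $y$ with $g(\underline{x}^{(k-1)},y)=0$ (all qualifying imaginary eigenvalues of ${\mathcal M}(\lambda;\underline{x}^{(k-1)})$), so interior tangency points subdivide the cross-section into subintervals whose midpoints move strictly away from the tangency, and the subsequent global horizontal searches then advance past it; and (ii) a connectivity argument --- each connected component of $\Lambda_\epsilon(P)$ contains an eigenvalue of $P$, so the component containing a globally rightmost point meets every vertical line with abscissa in $[\underline{x}^{(0)},\alpha_\epsilon(P)]$, guaranteeing that the scans keep producing midpoints inside that component until the searches reach $\alpha_\epsilon(P)$. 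Your phrase ``exclude the possibility that another component lies to the right'' gestures at (ii) but supplies no mechanism, and without (i) the argument can genuinely fail (in the two-disk picture above, a scan that merges the touching subintervals into one interval with midpoint at the tangency would stall strictly below $\alpha_\epsilon(P)$). So the proposal follows the paper's route but leaves the decisive step of the Burke--Lewis--Overton argument unproved, and the two concrete mechanisms you propose in its place (width shrinkage, analyticity at $z^\ast$) are not the correct ones.
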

\begin{theorem}[At Least Quadratic Convergence]
Suppose that at every globally rightmost point 
$z_\ast = x_\ast + {\rm i} y_\ast$ with 
$x_\ast, y_\ast \in {\mathbb R}$ of $\Lambda_{\epsilon}(P)$
the singular value $\sigma_{\min}(P(z_\ast))$ is simple and 
$\nabla g(x_\ast, y_\ast) \neq 0$. Then the sequence $\{ x^{(k)} \}$ 
generated by Algorithm \ref{alg:criss-cross} satisfies
\[
	| x^{(k+1)} - \alpha_{\epsilon}(P) |	\;	\leq		\;   
		{\mathcal C} | x^{(k)} - \alpha_{\epsilon}(P) |^2
\]
for some constant ${\mathcal C} \geq 0$.
\end{theorem}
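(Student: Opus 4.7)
My plan is to mirror the strategy of the proof of \cite[Theorem 5.2]{BurLO05}, with the singular-value distance function $h$ there replaced by $g$ from (\ref{eq;defn_g}). Let $x_\ast := \alpha_\epsilon(P)$, let $z_\ast = x_\ast + {\rm i} y_\ast$ be a globally rightmost point of $\Lambda_\epsilon(P)$, and set $e_k := x_\ast - x^{(k)} \geq 0$; by the preceding global-convergence theorem $e_k \to 0$. The goal is to show $e_k \leq {\mathcal C}\, e_{k-1}^2$ for some constant ${\mathcal C}$ and all sufficiently large $k$.

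I would first use the hypotheses to obtain a smooth local parameterization of $\partial \Lambda_\epsilon(P)$ near $z_\ast$. The simplicity of $\sigma_{\min}(P(z_\ast))$ makes $\sigma_{\min}(P(\cdot))$ real-analytic in a neighborhood of $z_\ast$ (by the standard analytic perturbation theory for a simple singular value whose argument depends analytically on the parameters), and $\epsilon p_w(\sqrt{x^2+y^2})$ is analytic away from the origin, so $g$ is real-analytic near $(x_\ast, y_\ast)$. Combined with $\nabla g(x_\ast, y_\ast) \neq 0$, the implicit function theorem yields an analytic local parameterization $x = f(y)$ of $\partial \Lambda_\epsilon(P)$ with $f(y_\ast) = x_\ast$ and $f'(y_\ast) = 0$ (since $y_\ast$ is a local maximizer of $f$); I take $f''(y_\ast) < 0$ as the usual non-degeneracy convention built into the BLO05 framework.

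Next, I would track how one sweep of Algorithm \ref{alg:criss-cross} propagates the error via a third-order Taylor expansion of $f$ around $y_\ast$. For $k$ large, the connected component of the vertical slice $\Lambda_\epsilon(P) \cap \{\mre(z) = x^{(k-1)}\}$ containing $y_\ast$ is an interval $[y^L, y^R]$ with $f(y^L) = f(y^R) = x^{(k-1)} = x_\ast - e_{k-1}$. Expanding $f$ to third order gives $y^R - y_\ast,\; y_\ast - y^L = O(\sqrt{e_{k-1}})$; the leading square-root contributions to $y^L$ and $y^R$ have opposite signs and cancel in the midpoint, and the surviving asymmetric term from $f'''(y_\ast)$ yields $|y^{\mathrm{mid}} - y_\ast| = |(y^L + y^R)/2 - y_\ast| = O(e_{k-1})$. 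The horizontal search at $y^{\mathrm{mid}}$ then returns a point with real part $f(y^{\mathrm{mid}}) = x_\ast + \tfrac{1}{2} f''(y_\ast)(y^{\mathrm{mid}} - y_\ast)^2 + O((y^{\mathrm{mid}}-y_\ast)^3) = x_\ast - O(e_{k-1}^2)$. Since $x^{(k)}$ is the maximum of the real parts returned across all horizontal searches it satisfies $x^{(k)} \geq f(y^{\mathrm{mid}})$, while $x^{(k)} \leq x_\ast$ by definition of $\alpha_\epsilon(P)$, and combining these bounds gives $e_k \leq {\mathcal C}\, e_{k-1}^2$ with ${\mathcal C}$ depending only on $f''(y_\ast)$ and $f'''(y_\ast)$.

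The main obstacle I expect is the localization step: verifying that, for $k$ sufficiently large, the interval $[y^L_j, y^R_j]$ from the vertical scan whose midpoint drives the analysis is in fact the component bracketing $y_\ast$ and lies within the domain where the local parameterization $x = f(y)$ is valid. This reduces to a compactness/continuity argument showing that the connected component of $\{y : g(x^{(k-1)}, y) \leq 0\}$ containing $y_\ast$ shrinks onto $\{y_\ast\}$ as $x^{(k-1)} \to x_\ast$. A subsidiary subtlety, handled exactly as in \cite{BurLO05}, is the possibility of several globally rightmost points: since Algorithm \ref{alg:criss-cross} takes the maximum across all horizontal searches, the contribution from the midpoint associated with any single rightmost component suffices to deliver the quadratic bound.
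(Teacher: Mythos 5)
Your plan coincides with the paper's own proof, which is not spelled out beyond the remark that it is obtained from the argument for \cite[Theorem 5.2]{BurLO05} by replacing the singular value function $h(x,y)$ there with $g(x,y)$ of (\ref{eq;defn_g}); your local parameterization of the boundary, the midpoint cancellation, and the Taylor-expansion bookkeeping are exactly the ingredients of that argument, and your treatment of the localization step and of multiple rightmost points matches how it is handled there.

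There is, however, one genuine gap relative to the statement being proved: you ``take $f''(y_\ast)<0$ as the usual non-degeneracy convention built into the BLO05 framework.'' No such convention is built in. The hypotheses here (and in \cite{BurLO05}) are only that $\sigma_{\min}(P(z_\ast))$ is simple and $\nabla g(x_\ast,y_\ast)\neq 0$; these give real analyticity of $g$ near $(x_\ast,y_\ast)$ (simplicity should be combined with the observation that $\sigma_{\min}(P(z_\ast))=\epsilon\, p_w(|z_\ast|)>0$, as required by Lemma \ref{lem:sval_der}) and, via the implicit function theorem, an analytic parameterization $x=f(y)$ with $f'(y_\ast)=0$, but they do not exclude $f''(y_\ast)=0$. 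The argument must therefore use analyticity to split into cases: either $f\equiv x_\ast$ locally (a vertical boundary segment, for which the horizontal search returns $x_\ast$ exactly), or $f(y)=x_\ast-c\,(y-y_\ast)^{2m}+d\,(y-y_\ast)^{2m+1}+\cdots$ for some $c>0$ and integer $m\geq 1$. Your midpoint computation extends verbatim to this setting: with $\rho:=(e_{k-1}/c)^{1/(2m)}$ the two crossing ordinates are $y_\ast\pm\rho+O(\rho^{2})$, the leading terms cancel so that $|y^{\mathrm{mid}}-y_\ast|=O(\rho^{2})$, and hence $x_\ast-f(y^{\mathrm{mid}})=O(\rho^{4m})=O(e_{k-1}^{2})$, which is the claimed quadratic bound. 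As written, with the additional hypothesis $f''(y_\ast)<0$, your proof establishes only a weaker statement than the theorem; replacing that assumption by the case analysis above closes the gap.
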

Table \ref{table:criss-cross_iterates} lists the iterates of the algorithm on two exemplary 
quadratic matrix polynomials. For both polynomials the iterates converge at a quadratic rate
as expected.

\begin{table}
\hskip 15ex
 \begin{tabular}{|c||c|}
        \hline
        $k$ & $x^{(k)}$ \\
        \hline
        0 & 0.152872406397 \\ 
        1 & 0.\underline{42}2554211904 \\ 
        2 & 0.\underline{426}672961073 \\ 
        3 & 0.\underline{426804}683720 \\ 
        4 & 0.\underline{426804822541} \\ 
        \hline
 \end{tabular}
 \hskip 2.5ex
 \begin{tabular}{|c||c|}
        \hline
        $k$ & $x^{(k)}$ \\
        \hline
        0 & 0.731311985577 \\ 
        1 & 0.\underline{8552}40708191 \\ 
        2 & 0.\underline{8552562255}58 \\ 
        3 & 0.\underline{855256225560} \\ 
        \phantom{4} & 	\\
        \hline
    \end{tabular}
    \caption{ Lists of iterates of Algorithm \ref{alg:criss-cross} on two
    quadratic matrix polynomials. Left: Polynomial of size $4\times 4$, $\epsilon = 0.7$
    and $(w_m, w_c, w_k) = (1,1,1)$. Right: Polynomial of size $100\times 100$,
    $\epsilon = 0.3$ and $(w_m, w_c, w_k) = (0.4,0.3,1)$.}
    \label{table:criss-cross_iterates}
    \end{table}

\textbf{Termination.}
We terminate the algorithm in line \ref{alg:terminate} if the estimate $x^{(k)}$ 
after update in line \ref{line:href} is not greater than the previous estimate $x^{(k-1)}$. 
Another possibility for termination is that the vertical search in line \ref{line:vscan} 
cannot find any intersection points of $\Lambda_{\epsilon}(P)$
with $\mre(z) = x^{(k-1)}$, i.e. the matrix polynomial ${\mathcal M}(\lambda ; x^{(k-1)})$
does not have any imaginary eigenvalue. In this second possibility, 
we simply skip the horizontal refinement
in line \ref{line:href} and terminate in line \ref{alg:terminate}.

\textbf{Computational cost.}
The computationally most expensive parts
of Algorithm \ref{alg:criss-cross} are the vertical search in line \ref{line:vscan} 
and $l$ horizontal searches in line \ref{line:href} that need to be performed
at every iteration. Each vertical and horizontal search typically requires 
the computation of all eigenvalues of a quartic matrix polynomial of size $2\sn \times 2\sn$, 
that can be determined at an ${\mathcal O}(\sn^3)$ computational cost in practice.
Hence, every iteration of Algorithm \ref{alg:criss-cross} has a cubic cost 
${\mathcal O}(\sn^3)$ in practice, yet the constant hidden in the 
${\mathcal O}$-notation is large, since the eigenvalue problems are of size 
$2\sn \times 2\sn$ and involve quartic matrix polynomials. This makes the 
algorithm suitable only for small matrix polynomials.

\subsection{Minimization of the $\epsilon$-pseudospectral abscissa for small-scale problems}\label{sec:min_full}
The pseudospectral abscissa function $\alpha_{\epsilon}(\nu) := \alpha_{\epsilon}(P(\, \cdot \,; \nu))$ 
is usually not convex in the parameters $\nu$. Moreover, it is typically not smooth whenever the
rightmost point in $\Lambda_{\epsilon}(P(\, \cdot \,; \nu))$ is not unique, or the singular value
$\sigma_{\min}(P(\underline{z} \,; \nu))$ at the rightmost point $\underline{z}$ of 
$\Lambda_{\epsilon}(P(\, \cdot \,; \nu))$ is not simple.
Smooth optimization techniques, especially BFGS, have been used successfully on such nonsmooth
optimization problems; see in particular \cite{CurMO17} and 
the MATLAB package {\GRANSO}, which is based on BFGS. 
Such smooth optimization techniques operate locally, and can at best be expected to converge to a local 
minimizer of $\alpha_{\epsilon}(\nu)$, that is likely not a global minimizer. On the other end,
global optimization techniques may guarantee convergence to a global minimizer of 
$\alpha_{\epsilon}( \nu)$, but they become computationally infeasible
if there are more than a few parameters. Such a global optimization technique based on global
piecewise quadratic approximations \cite{BreC1993} is adopted
for eigenvalue functions in \cite{MenYK14}. Its implementation {\EIGOPT} is publicly available. 
At every iteration, this approach  minimizes the maximum of several quadratic functions 
bounding the objective from below and interpolating the objective at several points, 
then adds a new quadratic function that
interpolates the objective at the computed minimizer to existing quadratic functions,
and again minimizes the maximum of quadratic functions.
We emphasize that both {\GRANSO} and {\EIGOPT} work well for nonsmooth problems,
as long as the objective is continuous, piecewise continuously differentiable and differentiable
almost everywhere, which is the case for $\alpha_{\epsilon}(\nu)$. 
In particular, they are both well-suited to converge 
to nonsmooth minimizers of $\alpha_{\epsilon}(\nu)$
provided that $\alpha_{\epsilon}(\nu)$ is differentiable at the iterates of these algorithms.
We refer to \cite{LewO2013} and \cite[Theorem 8.1]{MenYK14} for the convergence theory of 
{\GRANSO} and {\EIGOPT}, respectively, when the objective is nonsmooth at the minimizer.

In the small-scale setting, we minimize $\alpha_{\epsilon}(\nu)$ 
either with {\GRANSO} if there are many 
parameters, or with {\EIGOPT} if there are only a few parameters. 
Note that {\EIGOPT} at termination guarantees that the $\epsilon$-pseudospectral
abscissa objective at the computed estimate for the minimizer does not differ from the actual
globally smallest value of the objective by more than a prescribed tolerance.
On the other hand, for {\GRANSO} there are several termination criteria, but in
our context here it normally terminates either when the norm of the search direction is
less than a prescribed tolerance, or the line-search in the search direction fails 
to find a point satisfying the weak Wolfe conditions (i.e. that consist of a sufficient decrease and
a sufficient curvature condition \cite{Nocedal2006}). Both methods require not only
the computation of $\alpha_{\epsilon}(\nu)$, but also the first derivatives of $\alpha_{\epsilon}(\nu)$.
Fortunately, once $\alpha_{\epsilon}(\nu)$ as well as the rightmost point where it is attained are computed
by Algorithm~\ref{alg:criss-cross}, then the first derivatives of $\alpha_{\epsilon}(\nu)$
can be obtained at almost no additional computational cost.

To see this, for $P(\nu,s) := P(s_1 + {\rm i} s_2 ; \nu)$ and 
$\sigma(\nu,s) := \sigma_{\min}(P(\nu,s))$ with $s = (s_1, s_2) \in {\mathbb R}^2$, 
the $\epsilon$-pseudospectral abscissa function under consideration can be expressed
as 
\begin{equation}\label{eq:psa_opt}
	\alpha_{\epsilon}(\nu)
		\;	=	\;	
	\max \{ s_1	\;	|	\;
				s = (s_1, s_2) \in {\mathbb R}^2		\;	\text{ is s.t. }	\;
				\sigma(\nu,s) - \epsilon p_w( \| s \| )	\leq 0	\}
\end{equation}
with $\| s \| := \sqrt{s_1^2 + s_2^2}$ denoting the Euclidean norm of $s$, and
an associated Lagrangian function (in the primal variable $s$ and the dual variable $\mu$) is given by
\[
{\mathcal L}(\nu,s,\mu)	\;	:=	\;	s_1	-	\mu \{ \sigma(\nu,s) - \epsilon p_w( \| s \|) \}	\;	.
\]
For simplicity, we combine the primal and dual variable into a vector $y = (s,\mu)$.
Moreover, we denote by ${\mathcal L}_{s_1}$, ${\mathcal L}_{s_2}$ 
and ${\mathcal L}_{y}$, ${\mathcal L}_{yy}$ the partial derivative of ${\mathcal L}$ with
respect to $s_1$, $s_2$ and the gradient, the Hessian of ${\mathcal L}$ with respect to $y$.
If the optimizer (i.e. global maximizer) $s(\nu)$
of (\ref{eq:psa_opt}) is unique and $\sigma(\nu, s(\nu))$ is simple, then, 
assuming the satisfaction of a constraint qualification, there exists a
unique $\mu(\nu)$ such that ${\mathcal L}_y(\nu,s(\nu),\mu(\nu)) = 0$;
see the proof of Theorem \ref{thm:poly_psa_der} below.
We also combine the optimal values of the primal and dual variables, and write
$y(\nu) = (s(\nu),\mu(\nu))$.

The subsequent derivations make use of the smoothness result 
concerning the singular value functions given below. This can be regarded as a 
special case of the classical results related to the eigenvalue functions of Hermitian
matrix-valued functions that depend on its parameters in a real-analytic manner
\cite{Kato1995, Lancaster1964, Rellich1969}. The extension of these results to 
singular value functions has been discussed in
\cite[Section 3.3]{MenYK14}.
 \begin{lemma}\label{lem:sval_der}
 Let $A : {\mathbb R}^\ell \rightarrow {\mathbb C}^{p\times q}$ be a matrix-valued 
 function such that its real part $(1/2)\{ A + \overline{A} \}$ as well as imaginary part 
 $(-{\rm i}/2)\{ A - \overline{A} \}$ are real-analytic, and the $j$th largest singular value 
 $\sigma_j(\mu)$ of $A(\mu)$ is simple and nonzero at $\mu = \widetilde{\mu}$. 
 Then $\sigma_j(\mu)$ is real-analytic at $\mu = \widetilde{\mu}$, and its first 
 derivatives are given by
 \begin{equation}\label{eq:sval_der}
 	\frac{\partial \sigma_j}{\partial \mu_k} (\widetilde{\mu})
		\;	=	\;
	{\mathrm Re} \left\{ u^\ast_j  \, \frac{\partial A}{\partial \mu_k}(\widetilde{\mu}) \, v_j \right\}	
 \end{equation}
 for $k = 1, \dots , \ell$,
 where $u_j, v_j$ denote consistent unit left, respectively right singular vectors of $A(\widetilde{\mu})$ 
 corresponding to the singular value $\sigma_j(\widetilde{\mu})$.
 \end{lemma}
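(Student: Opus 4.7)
The plan is to reduce Lemma \ref{lem:sval_der} to the classical real-analytic perturbation theory of simple eigenvalues of Hermitian matrix-valued functions via the Hermitian dilation
\[
	H(\mu) \; := \;
	\left[
	\begin{array}{cc}
		0					&	A(\mu)			\\[.3em]
		A(\mu)^\ast	&	0
	\end{array}
	\right]  \in {\mathbb C}^{(p+q)\times (p+q)}.
\]
Since the real and imaginary parts of $A(\mu)$ are real-analytic in $\mu$ by hypothesis, the same holds for the entries of $A(\mu)^\ast$ and hence for those of $H(\mu)$, so $H$ is a real-analytic Hermitian matrix-valued function of $\mu$.

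First I would record the standard fact that the nonzero eigenvalues of $H(\mu)$ are exactly the pairs $\pm \sigma_k(\mu)$, together with possible zero eigenvalues when $p \neq q$, and that a unit eigenvector of $H(\mu)$ for the eigenvalue $+\sigma_j(\mu)$ is
\[
	w_j(\mu) \;	=	\; \frac{1}{\sqrt{2}}	\left[	\begin{array}{c} u_j(\mu) \\	v_j(\mu)	\end{array}	\right]
\]
whenever $u_j(\mu)$ and $v_j(\mu)$ are compatible unit left and right singular vectors. The simplicity of $\sigma_j(\widetilde{\mu})$ as a singular value rules out coincidences with $\sigma_k(\widetilde{\mu})$ for $k\neq j$, while the hypothesis $\sigma_j(\widetilde{\mu}) > 0$ separates $+\sigma_j(\widetilde{\mu})$ from $-\sigma_j(\widetilde{\mu})$ and from any zero eigenvalue of $H(\widetilde{\mu})$. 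Hence $+\sigma_j(\widetilde{\mu})$ is a simple eigenvalue of the real-analytic Hermitian family $H(\mu)$, and Rellich's theorem \cite{Kato1995,Rellich1969} delivers the real-analyticity of $\sigma_j(\mu)$ near $\widetilde{\mu}$ together with a real-analytic choice of unit eigenvector of $H(\mu)$.

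Finally, I would invoke the Hellmann--Feynman identity, which states that for a simple eigenvalue $\lambda(\mu)$ of a real-analytic Hermitian family $H(\mu)$ with unit eigenvector $w(\mu)$ one has $\partial \lambda / \partial \mu_k = w^\ast (\partial H /\partial \mu_k) w$. Substituting $w = w_j(\widetilde{\mu})$ and the $2\times 2$ block structure of $\partial H/\partial \mu_k$ produces
\[
	\frac{\partial \sigma_j}{\partial \mu_k}(\widetilde{\mu})
	\;=\;
	\frac{1}{2}\left( u_j^\ast \frac{\partial A}{\partial \mu_k}(\widetilde{\mu})\, v_j \;+\; v_j^\ast \frac{\partial A^\ast}{\partial \mu_k}(\widetilde{\mu})\, u_j \right),
\]
and the second summand is the complex conjugate of the first, so their half-sum equals $\mathrm{Re}\{u_j^\ast (\partial A / \partial \mu_k)(\widetilde{\mu})\, v_j\}$, which is exactly (\ref{eq:sval_der}).

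The main subtlety is the precise meaning of real-analyticity for a complex Hermitian matrix-valued function on a real parameter space, and verifying that the hypothesis on $A$ phrased through separate real-analyticity of its real and imaginary parts is the correct input for Rellich's theorem. Once this is confirmed, both the analyticity of $\sigma_j$ and the derivative formula follow as routine consequences of classical Hermitian eigenvalue perturbation theory, as treated in detail in \cite[Section 3.3]{MenYK14}.
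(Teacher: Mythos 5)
Your proof is correct and follows essentially the route the paper itself relies on: the paper offers no separate proof of Lemma \ref{lem:sval_der}, delegating it to classical analytic perturbation theory for Hermitian eigenvalue functions and its extension to singular values, and that extension is exactly your Hermitian dilation
$\left[\begin{array}{cc} 0 & A(\mu) \\ A(\mu)^\ast & 0 \end{array}\right]$,
which turns a simple nonzero singular value into a simple eigenvalue and then applies the standard eigenvalue-derivative identity to recover \eqref{eq:sval_der}. The only point worth making explicit is that Rellich's theorem as such is a one-parameter statement; for $\mu \in {\mathbb R}^\ell$ the real-analyticity of a \emph{simple} eigenvalue of the real-analytic Hermitian family $H(\mu)$ follows, e.g., from the analytic implicit function theorem applied to the characteristic polynomial (or from the multi-parameter versions in the references the paper cites), after which your block computation giving $\mathrm{Re}\{u_j^\ast (\partial A/\partial \mu_k)(\widetilde{\mu}) v_j\}$ is exactly as in the cited treatment.
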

 
We refer to a point $\nu \in \Omega$ as a \emph{non-degenerate point} 
of \eqref{eq:psa_opt} if
\begin{enumerate}
	\item there is a unique optimizer of (\ref{eq:psa_opt}), 
	denoted as $s(\nu) = (s_1(\nu) , s_2(\nu))$,
	\item the singular value $\sigma(\nu, s(\nu))$ of $P(\nu,s(\nu))$ is simple, and
	\item the Hessian ${\mathcal L}_{yy}(\nu,y(\nu))$ is invertible. 
\end{enumerate}
As we will prove in the next result, the three conditions above combined with
a mild assumption guarantee the smoothness of $\alpha_{\epsilon}(\nu)$.
For the subsequent result, $P'(s ; \nu)$ stands for the ordinary derivative 
of the polynomial $P(\, \cdot \, ; \nu)$ given by 
$P'(s; \nu) = 2s M(\nu) + C(\nu)$. Additionally, $g$ stands 
for the function defined in (\ref{eq;defn_g}), but to make its dependence on the 
parameters $\nu$ explicit, we use
\begin{equation}\label{eq:defn_g2}
	g(s; \nu)  \; := \;   \sigma(\nu,s) - \epsilon p_w( \| s \|).
\end{equation}
Also, $g_{s_1}$, $g_{s_2}$ represent the partial derivatives of $g$ with respect
to $s_1$, $s_2$, and $g_s$, $g_{ss}$ the gradient of $g$, respectively the Hessian of $g$
with respect to $s$.

Before presenting the result, we remark on the third condition 
for the non-degeneracy of $\nu$, which is in terms of the 
invertibility of the Hessian ${\mathcal L}_{yy}(\nu,y(\nu))$. Using the
definition of ${\mathcal L}(\nu,y)$, it is immediate that
\[
	{\mathcal L}_{yy}(\nu,y(\nu))
			=	
	\left[
	\begin{array}{cc}
		-\mu(\nu) g_{ss}(s(\nu); \nu) 	&	-g_s(s(\nu); \nu)			\\	
		-g_s(s(\nu); \nu)^T		&			0
	\end{array}
	\right]	.
\]
Assuming that the first two conditions for the non-degeneracy of $\nu$ hold
and that $g_s(s(\nu); \nu) \neq 0$, we must have $\mu(\nu) \neq 0$
(as apparent from the proof of Theorem \ref{thm:poly_psa_der} below,
more specifically from (\ref{eq:KKT_poly_psa})). Then, if $g_{ss}(s(\nu); \nu)$
is invertible, the Hessian ${\mathcal L}_{yy}(\nu,y(\nu))$ must be invertible,
due to the fact that
\[
	\left[
	\begin{array}{cc}
		-\mu(\nu) g_{ss}(s(\nu); \nu) 	&	-g_s(s(\nu); \nu)			\\	
			0		&			(1/\mu(\nu)) g_s(s(\nu); \nu)^T g_{ss}(s(\nu); \nu)^{-1} g_s(s(\nu); \nu)
	\end{array}
	\right]	
\]  
is invertible, recalling that  $g_s(s(\nu); \nu) \neq 0$, $\mu(\nu) \neq 0$.
Hence, the first two conditions for the non-degeneracy of $\nu$ combined
with the assumptions (i) $g_{ss}(s(\nu); \nu)$ is invertible, (ii) $g_s(s(\nu); \nu) \neq 0$
imply the invertibility of ${\mathcal L}_{yy}(\nu,y(\nu))$.

\begin{theorem}\label{thm:poly_psa_der}
Let $\widetilde{\nu}$ be a non-degenerate point of \eqref{eq:psa_opt}. 
Moreover, suppose that
$p_w( \| s(\widetilde{\nu}) \|) \neq 0 \,$ and $\, g_s(s(\widetilde{\nu}); \widetilde{\nu}) \neq 0 \,$.
Then the function $\nu \mapsto \alpha_{\epsilon}(\nu)$
is real analytic at $\widetilde{\nu}$ with the first derivatives given by
\begin{equation}\label{eq:psa_der}
	\frac{\partial \alpha_{\epsilon}}{\partial \nu_j} (\widetilde{\nu})
		\;	=	\;
	-\mu(\widetilde{\nu})
	\mre \left\{ u^\ast P_{\nu_j}(\widetilde{\nu} , s(\widetilde{\nu})) v \right\}
	\,	,
	\;\;\;\;
	j = 1, \dots , \sd		\,	,
\end{equation}
where $P_{\nu_j}$ denotes the partial derivative of $P$ with respect to $\nu_j$,
\begin{equation}\label{eq:lagrange_mult}
	\mu(\widetilde{\nu})
		\;	:=	\;
	\frac{1}{\mre\{ u^\ast P'(s(\widetilde{\nu}) ; \widetilde{\nu}) v \} - 
				\epsilon \{ s_1(\widetilde{\nu}) / \| s(\widetilde{\nu}) \| \}  p'_w( \| s(\widetilde{\nu}) \|)}	\,	,
\end{equation}
and
$u, v$ are consistent unit left, right singular vectors, respectively, 
of $P(\widetilde{\nu} , s(\widetilde{\nu}))$ 
corresponding to its smallest singular value $\sigma(\widetilde{\nu} , s(\widetilde{\nu}))$.
\end{theorem}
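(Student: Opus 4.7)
The plan is to set this up as a parametric optimization problem to which the real-analytic implicit function theorem applies, and then read off the derivative via the envelope theorem.

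First I would establish local real-analyticity of the constraint function $g$. Since $\sigma(\widetilde{\nu}, s(\widetilde{\nu}))$ is simple and nonzero (the latter holds because $P$ has no eigenvalue at $s(\widetilde{\nu})$ with the simple value being at most $\epsilon p_w(\|s(\widetilde{\nu})\|)$, whose nonvanishing we are assuming), Lemma \ref{lem:sval_der} applies jointly in $(\nu, s)$: the real and imaginary parts of $P(\nu, s_1+{\rm i}s_2)$ are real-analytic in the real parameters $(\nu_1, \dots, \nu_{\sd}, s_1, s_2)$, so $\sigma(\nu, s)$ and hence $g(s;\nu)$ in \eqref{eq:defn_g2} are real-analytic near $(\widetilde{\nu}, s(\widetilde{\nu}))$.

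Next I would write down the KKT system. Because $s(\widetilde{\nu})$ is the maximizer of \eqref{eq:psa_opt} and $g_s(s(\widetilde{\nu});\widetilde{\nu})\neq 0$ (which is the linear-independence constraint qualification for a single constraint), there is a unique Lagrange multiplier $\mu(\widetilde{\nu}) \in \R$ satisfying ${\mathcal L}_y(\widetilde{\nu}, y(\widetilde{\nu})) = 0$, that is
\begin{equation}\label{eq:KKT_poly_psa}
    1 - \mu(\widetilde{\nu}) \, g_{s_1}(s(\widetilde{\nu});\widetilde{\nu}) = 0, \qquad
    \mu(\widetilde{\nu}) \, g_{s_2}(s(\widetilde{\nu});\widetilde{\nu}) = 0, \qquad
    g(s(\widetilde{\nu});\widetilde{\nu}) = 0.
\end{equation}
The first equation forces $\mu(\widetilde{\nu})\neq 0$. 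Since the Jacobian of the left-hand side of the system ${\mathcal L}_y(\nu, y) = 0$ with respect to $y$ is exactly ${\mathcal L}_{yy}(\widetilde{\nu}, y(\widetilde{\nu}))$, which is invertible by non-degeneracy, the real-analytic implicit function theorem produces a real-analytic map $\nu \mapsto y(\nu) = (s(\nu), \mu(\nu))$ defined on a neighborhood of $\widetilde{\nu}$ such that ${\mathcal L}_y(\nu, y(\nu)) = 0$.

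The third step is identifying $s(\nu)$ from the implicit function theorem with the true global maximizer of \eqref{eq:psa_opt}. Using Assumption \ref{ass:bounded_ps} together with the upper semicontinuity of the maximizer set and the assumed uniqueness of the maximizer at $\widetilde{\nu}$, any sequence $\nu^{(k)} \to \widetilde{\nu}$ has maximizers converging to $s(\widetilde{\nu})$; combined with the active-constraint KKT equations, this shows that the global maximizer for $\nu$ near $\widetilde{\nu}$ satisfies the KKT system and hence agrees with the implicit function, so $\alpha_{\epsilon}(\nu) = s_1(\nu)$ locally. With this identification, since $g(s(\nu);\nu) = 0$, we obtain $\alpha_{\epsilon}(\nu) = {\mathcal L}(\nu, y(\nu))$, and real-analyticity of $\alpha_{\epsilon}$ follows from that of ${\mathcal L}$ and $y$.

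Finally I would apply the envelope theorem: the chain rule combined with ${\mathcal L}_y(\nu, y(\nu)) = 0$ gives
\[
    \frac{\partial \alpha_\epsilon}{\partial \nu_j}(\widetilde{\nu})
    = \frac{\partial {\mathcal L}}{\partial \nu_j}(\widetilde{\nu}, y(\widetilde{\nu}))
    = -\mu(\widetilde{\nu}) \, \frac{\partial g}{\partial \nu_j}(s(\widetilde{\nu}); \widetilde{\nu})
    = -\mu(\widetilde{\nu}) \, \mre\{ u^\ast P_{\nu_j}(\widetilde{\nu}, s(\widetilde{\nu})) v \},
\]
where the last equality is Lemma \ref{lem:sval_der} applied to $\sigma$ in the variable $\nu_j$ (the $p_w$ term is independent of $\nu$). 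For the explicit expression \eqref{eq:lagrange_mult} for $\mu(\widetilde{\nu})$, I would use the first KKT equation $\mu(\widetilde{\nu}) g_{s_1}(s(\widetilde{\nu});\widetilde{\nu}) = 1$, compute $\partial \sigma/\partial s_1 = \mre\{u^\ast P'(s(\widetilde{\nu});\widetilde{\nu}) v\}$ via Lemma \ref{lem:sval_der} (noting $\partial P/\partial s_1$ equals the ordinary polynomial derivative $P'$), and differentiate $p_w(\|s\|)$ by the chain rule to get $(s_1/\|s\|) p'_w(\|s\|)$, which requires $\|s(\widetilde{\nu})\|\neq 0$ guaranteed by $p_w(\|s(\widetilde{\nu})\|) \neq 0$ via continuity.

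The main obstacle I expect is the third step, namely ensuring that the KKT solution produced by the implicit function theorem genuinely tracks the \emph{global} maximizer of \eqref{eq:psa_opt} rather than merely a stationary point; this is where the uniqueness assumption in the definition of non-degeneracy, the compactness afforded by Assumption \ref{ass:bounded_ps}, and a semicontinuity argument for the argmax must be combined carefully. Everything else is a routine assembly of Lemma \ref{lem:sval_der}, the KKT conditions, and the envelope formula.
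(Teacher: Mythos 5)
Your proposal follows essentially the same route as the paper's proof: joint real-analyticity of $\sigma$ and $g$ via Lemma \ref{lem:sval_der}, LICQ from $g_s \neq 0$ giving a unique multiplier through the KKT system \eqref{eq:KKT_poly_psa}, the analytic implicit function theorem applied to ${\mathcal L}_y = 0$ using invertibility of ${\mathcal L}_{yy}$, identification of the implicit solution with the global maximizer via upper semicontinuity of the argmax (the paper cites \cite[Proposition 4.41]{BonS00} for exactly the step you flag as the main obstacle), and the envelope argument plus \eqref{eq:sval_der} to obtain \eqref{eq:psa_der} and \eqref{eq:lagrange_mult}. The only place you are terser than the paper is the claim that $\sigma(\widetilde{\nu}, s(\widetilde{\nu})) \neq 0$, which the paper justifies by an explicit contradiction with the maximality of $s(\widetilde{\nu})$ (if $\sigma$ vanished, the constraint would hold strictly on a neighborhood since $\epsilon p_w > 0$ there, so one could move further right); your parenthetical gestures at this but does not spell it out.
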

\begin{proof}
As $\widetilde{\nu}$ is non-degenerate, 
(\ref{eq:psa_opt}) with $\nu = \widetilde{\nu}$ has a unique optimizer, 
which we denote by $s(\widetilde{\nu})$,  and $\sigma(\widetilde{\nu},s(\widetilde{\nu}))$ 
is simple. If $\sigma(\widetilde{\nu},s(\widetilde{\nu})) = 0$ holds, then, 
by the continuity of $\sigma(\widetilde{\nu},s)$ with respect to $s$,
there is a ball 
${\mathcal B} := \{ s \in {\mathbb R}^2 \; | \;  \| s - s(\widetilde{\nu}) \| \leq r \}$ 
with a positive radius $r$ centered at $s(\widetilde{\nu})$ such that
$\sigma(\widetilde{\nu},s) \leq \epsilon$ for all $s \in {\mathcal B}$, 
which contradicts  the optimality of $s(\widetilde{\nu})$
for (\ref{eq:psa_opt}) with $\nu = \widetilde{\nu}$.
Consequently, $\sigma(\widetilde{\nu},s(\widetilde{\nu}))$ is simple and nonzero.
By continuity, there is an open neighborhood $\widetilde{U} \subseteq {\mathbb R}^{\sd}$
of $\widetilde{\nu}$ and an open neighborhood $\widehat{U}  \subseteq {\mathbb R}^2$
of $s(\widetilde{\nu})$ such that
the singular value function $\sigma(\nu, s)$ remains 
simple and positive in $\widetilde{U} \times \widehat{U}$, while $p_w( \| s \|) > 0$ in $\widehat{U}$,
so $\sigma(\nu, s)$ as well as $g(s; \nu)$ defined as in (\ref{eq:defn_g2}) 
are real analytic in $\widetilde{U} \times \widehat{U}$ by Lemma \ref{lem:sval_der}.
We assume, without loss of generality, that $g_s(s;\nu) \neq 0$, indeed
the existence of a positive real number $\gamma$ such that $\| g_s(s;\nu)  \| \geq \gamma$,
for all $(s,\nu) \in \widetilde{U} \times \widehat{U}$ (if necessary by choosing 
$\widetilde{U}$, $\widehat{U}$ smaller).

The set of optimal solutions $S(\nu)$ of (\ref{eq:psa_opt}) is upper
semicontinuous in $\widetilde{U}$ \cite[Proposition 4.41]{BonS00} 
(i.e. for every neighborhood ${\mathcal S}$ 
of $S(\nu)$ there is a neighborhood ${\mathcal V}$ of $\nu \in \widetilde{U}$
such that ${\mathcal S} \supseteq S(\vartheta)$ 
for all $\vartheta \in {\mathcal V}$). In particular, there is a neighborhood $U \subseteq \widetilde{U}$
of $\widetilde{\nu}$ such that $S(\vartheta) \subseteq \widehat{U}$ for all $\vartheta \in U$. Let
$s(\vartheta)$ denote a maximizer of (\ref{eq:psa_opt}) for $\vartheta \in U$. (It is soon to be established
that (\ref{eq:psa_opt}) with $\vartheta \in U$ has a unique maximizer, so the notation $s(\vartheta)$ is 
consistent with the notation introduced before the theorem.) 
The singular value function $\sigma(\nu, s)$ is real analytic at 
$(\vartheta, s(\vartheta)) \in U \times \widehat{U} \subseteq \widetilde{U} \times \widehat{U}$ 
and $g_s(s(\vartheta);\vartheta) \neq 0$ from the previous paragraph.
The linear independence constraint qualification holds at $(\vartheta, s(\vartheta))$
due to $g_s(s(\vartheta);\vartheta) \neq 0$. Hence, by applying the first-order necessary 
conditions for a constrained optimization problem \cite[Theorem 12.1]{Nocedal2006}
to (\ref{eq:psa_opt}) with $\nu = \vartheta$, there is a Lagrange multiplier $\mu(\vartheta)$ satisfying
\begin{equation}\label{eq:KKT_poly_psa}
	 {\mathcal L}_{s_1}(\vartheta,s(\vartheta), \mu(\vartheta))
		\;	 =	\;
	1	-	
	\mu(\vartheta) \left\{ \sigma_{s_1}(\vartheta , s(\vartheta))
			- \epsilon \frac{\partial}{\partial s_1} \left[ p_w (\| s \| ) \right] 
					\bigg|_{s = s(\vartheta)} \right\}
			=	\;		0	\,	.
\end{equation}
It is evident from (\ref{eq:KKT_poly_psa}) that the Lagrange multiplier $\mu(\vartheta)$
corresponding to the maximizer $s(\vartheta)$ is unique. Letting, for $\nu \in U$,
$y(\nu) := (s(\nu) , \mu(\nu))$,
indeed, the first-order necessary conditions imply ${\mathcal L}_{y}(\nu,y(\nu)) = 0$
for every $\nu \in U$. 
Note also that $s(\widetilde{\nu})$ must be the unique 
maximizer of (\ref{eq:psa_opt}) with $\nu = \widetilde{\nu}$ (i.e. by assumption,
$\widetilde{\nu}$ is a non-degenerate point,
in particular (\ref{eq:psa_opt}) with $\nu = \widetilde{\nu}$ has a unique maximizer).

On the other hand, since ${\mathcal L}_{yy}(\widetilde{\nu},y(\widetilde{\nu}))$ 
is invertible by the non-degeneracy of $\widetilde{\nu}$,
the analytic implicit function theorem implies the existence of a 
neighborhood $U_0 \subseteq U$ of $\widetilde{\nu}$ and a unique function 
$w : U_0 \rightarrow \widehat{U}$ satisfying $w(\widetilde{\nu}) =  y(\widetilde{\nu})$
and ${\mathcal L}_{y}(\nu,w(\nu)) = 0$ for all $\nu \in U_0$. Moreover,
the function $w$ is real analytic. By the uniqueness of $w$, the function 
$\nu \in U_0 \mapsto y(\nu)$ is unique and $y(\nu) = w(\nu)$ for every $\nu \in U_0$. 
This also shows the real analyticity of $y(\nu)$ at every $\nu \in U_0$.
In particular, the map $\nu \in U \mapsto \alpha_{\epsilon}(\nu) = s_1(\nu)$ 
is real analytic at $\nu = \widetilde{\nu}$.

To derive the formulas for the derivative given by (\ref{eq:psa_der}) and (\ref{eq:lagrange_mult}), 
we note that
\begin{equation*}
	\begin{split}
	&
	\sigma_{s_1}(\widetilde{\nu} , s(\widetilde{\nu}))
			\;	=	\;
		\mre
		\left\{
			u^\ast
				\frac{\partial P}{\partial s_1}(\widetilde{\nu} , s(\widetilde{\nu}))
			v
		\right\}
			\;	=	\;
		\mre \{ u^\ast P'(s(\widetilde{\nu}) ; \widetilde{\nu}) v \}	\,	,	\\[.2em]
	&
		\frac{\partial}{\partial s_1} \left[ p_w (\| s \| ) \right] 
						\bigg|_{s = s(\widetilde{\nu})}
			\;	=	\;
		\frac{s_1(\widetilde{\nu})}{ \| s(\widetilde{\nu}) \| } \, p'_w ( \| s(\widetilde{\nu}) \|)	\;	,
	\end{split}
\end{equation*}
where, in the first line, we use the analytic formula in (\ref{eq:sval_der}),
in particular $u$, $v$ denote a pair of consistent unit left,
right singular vectors corresponding to the singular value 
$\sigma(\widetilde{\nu} , s(\widetilde{\nu}))$.
It then follows from (\ref{eq:KKT_poly_psa}) that
\[
	\mu(\widetilde{\nu})
		\;	=	\;
	\frac{1}{\mre \{ u^\ast P'(s(\widetilde{\nu}) ; \widetilde{\nu}) v \}
			-  \epsilon \{ s_1(\widetilde{\nu}) / \| s(\widetilde{\nu}) \| \} p'_w ( \| s(\widetilde{\nu}) \|) }	\,	.
\]
Since $\mu(\nu) \neq 0$ for $\nu \in U_0$ due to (\ref{eq:KKT_poly_psa}),
by the complementary conditions we must have $g(s(\nu); \nu) = 0$  for $\nu \in U_0$ 
implying that $\alpha_{\epsilon}(\nu) = s_1(\nu) = {\mathcal L}(\nu,y(\nu))$ for all $\nu \in U_0$.
Finally, by also making use of ${\mathcal L}_y (\widetilde{\nu}, y(\widetilde{\nu})) = 0$, 
we deduce that
\begin{equation*}
	\begin{split}
	\frac{\partial \alpha_{\epsilon}}{\partial \nu_j} (\widetilde{\nu})
			\;\;	=	\;
	\frac{\partial {\mathcal L}}{\partial \nu_j} (\widetilde{\nu},y(\widetilde{\nu}))
					+
{\mathcal L}_y (\widetilde{\nu}, y(\widetilde{\nu}))^T \frac{\partial y}{\partial \nu_j}(\widetilde{\nu})	
						\;	&  	 =	\;
	\frac{\partial {\mathcal L}}{\partial \nu_j} (\widetilde{\nu},y(\widetilde{\nu}))	 \\
			&	=	\;
			-\mu(\widetilde{\nu})
			\frac{\partial \sigma}{\partial \nu_j}(\widetilde{\nu} , s(\widetilde{\nu}))	\\[.3em]
			&	=	\;
			-\mu(\widetilde{\nu}) \mre \left\{ u^\ast P_{\nu_j}(\widetilde{\nu} , s(\widetilde{\nu})) v \right\}	\:	,
	\end{split}
\end{equation*}
where again the last equality follows from the formula in (\ref{eq:sval_der})
for the derivatives of a singular value function.
\end{proof}

\vskip -3ex

\begin{remark}
{\rm It is known in the matrix setting that the counterpart of the condition 
$g_s(s(\widetilde{\nu}); \widetilde{\nu}) \neq 0$ in Theorem \ref{thm:poly_psa_der}
always holds. In the matrix setting, the function 
$h(s; \nu) = \sigma_{\min}\left( A(\nu) - (s_1 + {\rm i} s_2) I \right) - \epsilon$ 
for a square matrix  $A(\nu)$ dependent on the parameters $\nu$ replaces $g(s; \nu)$.
Assuming that 
$\sigma_{\min}\left( A(\widetilde{\nu}) - (\widetilde{s}_1 + {\rm i} \widetilde{s}_2) I \right)$
is simple, nonzero at a given $\widetilde{\nu}$ and a locally rightmost point 
$\widetilde{s} = ( \widetilde{s}_1 , \widetilde{s}_2 )$ in the $\epsilon$-pseudospectrum
of $A(\widetilde{\nu})$, it follows from (\ref{eq:sval_der}) that
$h_s(\widetilde{s}; \widetilde{\nu}) = \left( -\text{Re}(u^\ast v) , \text{Im}(u^\ast v) \right)$,
where $u$,$v$ are consistent unit left, unit right singular vectors corresponding to
$\sigma_{\min}\left( A(\widetilde{\nu}) - (\widetilde{s}_1 + {\rm i} \widetilde{s}_2) I \right)$.
It has been argued in \cite[Lemma 2.6]{Guglielmi2011} as a corollary of 
\cite[Theorem 9]{AlaBBO11} that $u^\ast v \neq 0$\footnote{We thank Michael Overton
for pointing out these results in the matrix setting.}, 
so $h_s(\widetilde{s}; \widetilde{\nu}) \neq 0$.}
\end{remark}

\vskip -3ex

\begin{remark}{\rm
Suppose that $P(\lambda; \nu) = \lambda^2 M(\nu) + \lambda C(\nu) + K(\nu)$ 
is a Hermitian quadratic matrix polynomial for all $\nu \in \Omega$, i.e.  
$M(\nu), C(\nu), K(\nu)$ are Hermitian matrices for all $\nu \in \Omega$.
Then it is apparent from the characterization in (\ref{eq:char}) that
$\Lambda_{\epsilon}(P(\, \cdot \, ; \nu))$ is symmetric in the complex plane
with respect to the real axis, i.e. $z \in \Lambda_{\epsilon}(P(\, \cdot \, ; \nu))$ if and only if
$\, \overline{z} \in \Lambda_{\epsilon}(P(\, \cdot \, ; \nu))$, for all $\nu \in \Omega$. \\[.8em]
As a result, the optimization problem in (\ref{eq:psa_opt}) for 
$\alpha_{\epsilon}(P(\, \cdot \, ; \nu))$ can be restricted to
$(s_1, s_2) \in {\mathbb R}^2$ with $s_2 \geq 0$ rather than 
any $(s_1, s_2) \in {\mathbb R}^2$. The definition of a 
non-degenerate point $\nu \in \Omega$ can then be altered accordingly.
In particular, now at a non-degenerate point $\nu$ the optimization 
problem is required to have a unique optimizer $s(\nu) = (s_1(\nu) , s_2(\nu))$
in the upper half-plane $\{ (s_1, s_2) \in {\mathbb R}^2 \; | \; s_2 \geq 0 \} $
rather than ${\mathbb R}^2$. Theorem \ref{thm:poly_psa_der} in the
Hermitian setting still holds with these changes.}
\end{remark}

\section{Computation of the $\epsilon$-pseudospectral abscissa for large-scale problems}\label{sec:large_compute} 
For the large matrix setting, inspired by the ideas in \cite{KreV14}, 
a method for the computation of the $\epsilon$-pseudospectral abscissa of 
a nonlinear eigenvalue problem has been proposed in \cite{Mer17}. The approach
finds a locally  rightmost point in the $\epsilon$-pseudospectrum
for a projected nonlinear eigenvalue problem by means of the support-function based optimization 
technique in \cite{Men17}, and refines the projected nonlinear eigenvalue problem
by expanding the subspace based on the computed rightmost point. 

Even through the approach in \cite{Mer17} can be adapted to our polynomial eigenvalue 
problem setting,  it does not exploit the polynomial structure, in particular the tools such as 
the vertical and horizontal searches in the previous section.
Additionally, the approach in \cite{Mer17} is developed for the slightly different
definition of the $\epsilon$-pseudospectrum in \cite{Mic06,TisH01,TisM01} that uses the characterization  
$
	\{
		\lambda \in {\mathbb C}	\;	|	\;
			\sigma_{\min}(P(\lambda))		\leq	
			\epsilon  q_w (| \lambda |) 
	\}
$
for $q_w(z) = w_m z^2 + w_c z + w_k$. 

For these reasons we introduce a subspace
framework for our setting, motivated by the approaches in \cite{KreV14,Mer17}. Given a
subspace ${\mathcal V}$ of ${\mathbb C}^{\sn}$ and a matrix $V$ whose columns 
form an orthonormal basis for this subspace, i.e. $\mathcal V=\mbox{\rm Col}(V)$, the subspace framework to compute $\alpha_{\epsilon}(P)$,  replaces  $P(\lambda)$ by
the rectangular matrix polynomial
\[
	P^V(\lambda)
		\;	:=	\;
		P(\lambda) V
			\;	=	\;
		\lambda^2 (M V	)+	\lambda (C V)	+	(K V)		\:	.
\]
We define the associated $\epsilon$-pseudospectrum and
the $\epsilon$-pseudospectral abscissa of the projected problem by
\begin{equation}\label{eq:red_psa}
	\begin{split}
	\Lambda^{\mathcal V}_{\epsilon}(P)
		&	:=	\;	
			\{ 
			\lambda \in {\mathbb C}	\;	|	\;
			\sigma_{\min}(P^V(\lambda))		\leq	\epsilon p_w(| \lambda |) \}	\;	,
			\;\; \text{and}		\\[.3em]
	\alpha^{\mathcal V}_{\epsilon}(P)
		&	:=	\;
			\sup \{
					\mre (z)	\;	|	\;
					z \in {\mathbb C} \text{ s.t. }	
					\sigma_{\min}(P^V(z))	\leq \epsilon p_w(|z|) \}	\;	,
	\end{split}
\end{equation}
respectively. Clearly, the smallest singular value of $P^V(\lambda)$ depends
on the subspace ${\mathcal V}$, yet is
independent of the orthonormal basis $V$ that is used for ${\mathcal V}$.
Hence, as it is made explicit by the notations, the definitions of $\Lambda^{\mathcal V}_{\epsilon}(P)$
and $\alpha^{\mathcal V}_{\epsilon}(P)$ are independent of the choice of the orthonormal basis $V$
for ${\mathcal V}$.

The matrix polynomial $P^V(\lambda)$ is of size $\sn \times r$, where $r$ is the dimension
of the subspace ${\mathcal V}$. The associated $\epsilon$-pseudospectrum and the 
$\epsilon$-pseudospectral abscissa can be equivalently defined in terms of a reduced matrix
polynomial. In particular, let 
\[
	\left[
		\begin{array}{ccc}
			MV & CV & KV
		\end{array}
	\right]
		\;	=	\;
	Q
	\left[
		\begin{array}{ccc}
			\widehat{M}	&	\widehat{C}	&	\widehat{K}
		\end{array}
	\right]
\]
be a reduced QR factorization (see e.g. \cite{GolV96}) with $Q \in {\mathbb C}^{\sn \times 3r}$
and $\widehat{M}, \widehat{C}, \widehat{K} \in {\mathbb C}^{3r \times r}$. Then for the matrix polynomial
$\widehat{P}^V(\lambda) = \lambda^2 \widehat{M} + \lambda \widehat{C} + \widehat{K}$
of size $3r\times r$, we have
\[
		\sigma_{\min}(P^V(\lambda))	\;	=	\;
		\sigma_{\min}(\widehat{P}^V(\lambda))
		\quad	\mbox{\rm for all }\ \lambda \in {\mathbb C}.
\]
Hence, in the definitions of $\Lambda^{\mathcal V}_{\epsilon}(P)$
and $\alpha^{\mathcal V}_{\epsilon}(P)$ in (\ref{eq:red_psa}) the $3\sn \times r$
matrix polynomial $P^V$ can be replaced by the $3r\times r$ matrix polynomial $\widehat{P}^V$.

Algorithm \ref{alg:criss-cross} can be adapted to efficiently
compute $\alpha^{\mathcal V}_{\epsilon}(P)$, as well as a rightmost point in 
$\Lambda^{\mathcal V}_{\epsilon}(P)$. The only modification is that in the
vertical and horizontal searches, specifically in the definitions of ${\mathcal M}_j(x)$
and ${\mathcal N}_j(y)$ in Theorem \ref{thm:vert_src} and Theorem \ref{thm:horiz_src},
the matrix polynomial $P$ and matrix $M$ must be replaced by $\widehat{P}^V$
and $\widehat{M}$, respectively, and the sizes of the other zero, scalar-times-identity
blocks of ${\mathcal M}_j(x)$, ${\mathcal N}_j(y)$ must be adjusted accordingly. This results in 
matrix polynomials ${\mathcal M}(\lambda; x)$ and ${\mathcal N}(\lambda; y)$ of size 
$4r\times 4r$ required by the reduced problems
rather than their counterparts of size $2\sn\times 2\sn$ for the full problem.

The 
following properties of the  $\epsilon$-pseudospectrum
and $\epsilon$-pseudospectral abscissa 
have been shown in Lemma 4.1 and Theorem 4.3 of \cite{Mer17}.
\begin{theorem}[Monotonicity]\label{thm:monotonicity}
Let $\, {\mathcal U}$, ${\mathcal V}$ be two subspaces of $\, {\mathbb C}^n$ such that
$\, {\mathcal U} \subseteq {\mathcal V}$. The following assertions hold:
\begin{enumerate}
	\item  
	$\Lambda^{\mathcal U}_{\epsilon}(P) 
	\; \subseteq \;
	\Lambda^{\mathcal V}_{\epsilon}(P)
	\; \subseteq \;
	\Lambda_{\epsilon}(P)$;
	\item
	$\alpha^{\mathcal U}_{\epsilon}(P) 
	\; \leq \;
	\alpha^{\mathcal V}_{\epsilon}(P)
	\; \leq \;
	\alpha_{\epsilon}(P)$.
\end{enumerate}
\end{theorem}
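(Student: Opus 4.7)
The plan is to derive assertion (1) by exploiting the variational (min-over-unit-vectors) characterization of the smallest singular value, and then deduce assertion (2) as an immediate corollary by taking suprema of real parts over nested sets.

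First I would fix orthonormal bases $U$ and $V$ for $\mathcal{U}$ and $\mathcal{V}$, respectively. The crucial observation is that, independently of the choice of basis,
\begin{equation*}
\sigma_{\min}(P(\lambda) V) \;=\; \min_{\substack{y \in \mathbb{C}^{\dim \mathcal V} \\ \|y\|=1}} \|P(\lambda) V y\| \;=\; \min_{\substack{x \in \mathcal V \\ \|x\|=1}} \|P(\lambda) x\|,
\end{equation*}
since $y \mapsto V y$ is an isometric bijection from the unit sphere of $\mathbb{C}^{\dim \mathcal V}$ onto the unit sphere of $\mathcal V$ (using that the columns of $V$ are orthonormal). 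The analogous identity holds for $U$ and $\mathcal U$. Because $\mathcal U \subseteq \mathcal V \subseteq \mathbb{C}^{\sn}$, the minimization over the $\mathcal U$-sphere is carried out over a subset of the $\mathcal V$-sphere, which in turn is a subset of the unit sphere of $\mathbb{C}^{\sn}$. Minimizing over smaller sets yields larger values, so
\begin{equation*}
\sigma_{\min}(P(\lambda)) \;\leq\; \sigma_{\min}(P(\lambda) V) \;\leq\; \sigma_{\min}(P(\lambda) U) \qquad \text{for every } \lambda \in \mathbb{C}.
\end{equation*}

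From this pointwise inequality, assertion (1) is immediate: if $\lambda \in \Lambda^{\mathcal U}_{\epsilon}(P)$, i.e.\ $\sigma_{\min}(P(\lambda) U) \leq \epsilon p_w(|\lambda|)$, then $\sigma_{\min}(P(\lambda) V) \leq \epsilon p_w(|\lambda|)$, so $\lambda \in \Lambda^{\mathcal V}_{\epsilon}(P)$; similarly $\Lambda^{\mathcal V}_{\epsilon}(P) \subseteq \Lambda_{\epsilon}(P)$. Assertion (2) then follows because $\alpha_{\epsilon}$-type quantities are suprema of $\mre(z)$ over these three nested sets, and the supremum is monotone with respect to inclusion (with the convention that the supremum over the empty set is $-\infty$, so the inequalities are trivial if some of the sets are empty).

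The only subtlety worth checking carefully is the basis-independence of $\sigma_{\min}(P(\lambda)V)$, together with the fact that the characterization \eqref{eq:red_psa} in terms of $P^V$ agrees with the intrinsic ``restriction to $\mathcal V$'' description $\min_{x\in\mathcal V,\|x\|=1}\|P(\lambda) x\|$ used above. This is handled by noting that if $V$ and $\widetilde{V}$ are two orthonormal bases of $\mathcal V$, then $\widetilde V = V Q$ for some unitary $Q$, and right-multiplication by a unitary leaves singular values invariant. There is no real obstacle in this proof; the argument is purely a variational/min-max manipulation, and the quadratic structure of $P(\lambda)$ plays no role beyond guaranteeing that $P(\lambda)$ is a well-defined $\sn\times\sn$ matrix for each $\lambda\in\mathbb C$.
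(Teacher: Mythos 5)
Your proof is correct. Note that the paper itself gives no proof of this theorem, instead citing Lemma~4.1 and Theorem~4.3 of the reference on large-scale pseudospectral computations for nonlinear eigenvalue problems; your argument via the variational characterization $\sigma_{\min}(P(\lambda)V)=\min_{x\in{\mathcal V},\,\|x\|=1}\|P(\lambda)x\|$, the resulting pointwise inequalities $\sigma_{\min}(P(\lambda))\leq\sigma_{\min}(P(\lambda)V)\leq\sigma_{\min}(P(\lambda)U)$ for nested subspaces, and monotonicity of the supremum over nested sets is exactly the standard route underlying that cited result, and every step (including the basis-independence check and the empty-set convention for the abscissa) is sound.
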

\begin{theorem}[Low Dimensionality]\label{thm:low_dimensionality}
The following assertions are equivalent for a subspace ${\mathcal V}$ of ${\mathbb C}^n$:
\begin{enumerate}
	\item $\alpha_{\epsilon}(P)	\;	=	\;	\alpha^{\mathcal V}_{\epsilon}(P)$;
	\item The subspace ${\mathcal V}$ contains a vector $v$ that is a right
	singular vector corresponding to $\sigma_{\min}(P(\underline{z}))$ for some
	$\underline{z} \in \Lambda_{\epsilon}(P)$ such that $\mre(\underline{z}) = \alpha_{\epsilon}(P)$.
\end{enumerate}
\end{theorem}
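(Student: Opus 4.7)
I will prove the two directions separately, relying on the monotonicity result (Theorem \ref{thm:monotonicity}) and a boundary characterization of rightmost points in the pseudospectrum.

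\textbf{Direction (2) $\Rightarrow$ (1).} Assume $\mathcal V$ contains a unit right singular vector $v$ of $P(\underline z)$ corresponding to $\sigma_{\min}(P(\underline z))$ for some $\underline z\in\Lambda_{\epsilon}(P)$ with $\mre(\underline z)=\alpha_{\epsilon}(P)$. Let $V$ be an orthonormal basis of $\mathcal V$ and write $v=Vc$ for a unit vector $c$. Then
\[
	\sigma_{\min}(P^V(\underline z))\;\le\; \|P^V(\underline z)c\|_2 \;=\; \|P(\underline z)v\|_2 \;=\; \sigma_{\min}(P(\underline z)) \;\le\; \epsilon\, p_w(|\underline z|),
\]
so $\underline z\in \Lambda^{\mathcal V}_{\epsilon}(P)$ and hence $\alpha^{\mathcal V}_{\epsilon}(P)\ge\mre(\underline z)=\alpha_{\epsilon}(P)$. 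Combined with the monotonicity bound $\alpha^{\mathcal V}_{\epsilon}(P)\le\alpha_{\epsilon}(P)$ of Theorem \ref{thm:monotonicity}, this yields the equality in (1).

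\textbf{Direction (1) $\Rightarrow$ (2).} Assume $\alpha^{\mathcal V}_{\epsilon}(P)=\alpha_{\epsilon}(P)$. Since $\Lambda^{\mathcal V}_{\epsilon}(P)\subseteq\Lambda_{\epsilon}(P)$ by Theorem \ref{thm:monotonicity}, boundedness (Assumption \ref{ass:bounded_ps}) transfers to the reduced pseudospectrum, so the supremum defining $\alpha^{\mathcal V}_{\epsilon}(P)$ is attained at some $\underline z\in\Lambda^{\mathcal V}_{\epsilon}(P)$ with $\mre(\underline z)=\alpha^{\mathcal V}_{\epsilon}(P)=\alpha_{\epsilon}(P)$. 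The key observation is that any rightmost point of a bounded sublevel set of the continuous function $z\mapsto\sigma_{\min}(P^V(z))-\epsilon p_w(|z|)$ must lie on its zero level set: if the inequality were strict at $\underline z$, continuity would provide a neighborhood of $\underline z$ contained in $\Lambda^{\mathcal V}_{\epsilon}(P)$, contradicting the rightmost property. Applying the same argument to the full problem (using that $\underline z\in\Lambda_{\epsilon}(P)$ is also a rightmost point since $\mre(\underline z)=\alpha_\epsilon(P)$) gives
\[
	\sigma_{\min}(P^V(\underline z)) \;=\; \epsilon\, p_w(|\underline z|) \;=\; \sigma_{\min}(P(\underline z)).
\]

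\textbf{Extracting the singular vector.} Let $c$ be a unit right singular vector of $P^V(\underline z)$ corresponding to $\sigma_{\min}(P^V(\underline z))$, and set $v:=Vc\in\mathcal V$, which is a unit vector because the columns of $V$ are orthonormal. Then
\[
	\|P(\underline z)v\|_2 \;=\; \|P^V(\underline z)c\|_2 \;=\; \sigma_{\min}(P^V(\underline z)) \;=\; \sigma_{\min}(P(\underline z)),
\]
so $v\in\mathcal V$ is a right singular vector of $P(\underline z)$ corresponding to $\sigma_{\min}(P(\underline z))$, establishing (2).

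\textbf{Main obstacle.} The only nontrivial step is the boundary identity $\sigma_{\min}(P(\underline z))=\epsilon p_w(|\underline z|)$ at a rightmost point; this requires the standard continuity-plus-rightmost-point argument sketched above. Everything else is a direct manipulation exploiting that $V$ has orthonormal columns so that $\|P(\cdot)Vc\|_2=\|P^V(\cdot)c\|_2$ and that unit vectors in $\mathcal V$ are exactly images $Vc$ of unit vectors $c$.
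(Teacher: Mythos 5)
Your proof is correct: the direction (2) $\Rightarrow$ (1) via $\|P^V(\underline z)c\|_2=\|P(\underline z)v\|_2$ plus monotonicity is sound, and in (1) $\Rightarrow$ (2) the attainment of the supremum (using boundedness and closedness of $\Lambda^{\mathcal V}_{\epsilon}(P)\subseteq\Lambda_{\epsilon}(P)$), the boundary identity $\sigma_{\min}(P^V(\underline z))=\epsilon p_w(|\underline z|)=\sigma_{\min}(P(\underline z))$ at a rightmost point, and the final step that a unit vector attaining $\|P(\underline z)v\|_2=\sigma_{\min}(P(\underline z))$ is a right singular vector (the standard variational characterization) are all valid, with only the trivial implicit assumption that $\Lambda_{\epsilon}(P)$ is nonempty so that rightmost points exist. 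The paper does not prove this theorem itself but cites Lemma 4.1 and Theorem 4.3 of \cite{Mer17}, and your argument is essentially the standard proof of that cited result, so there is no substantive difference in approach.
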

The subspace framework to compute $\alpha_{\epsilon}(P)$ and a rightmost
point $\underline{z}$ in $\Lambda_{\epsilon}(P)$ when $\sn$ is large is presented
in Algorithm \ref{alg:sub_fw} below. Note that throughout this description
${\mathcal V}_k$ is the restriction subspace spanned by the columns of the matrix $V_k$.
The subspace framework gradually expands the subspace ${\mathcal V}_k$ 
until the correct vector or a nearby vector as in Theorem \ref{thm:low_dimensionality}
is in this subspace, ensuring that $\alpha_{\epsilon}(P) \approx \alpha^{{\mathcal V}_k}_{\epsilon}(P)$.
At every iteration, a rightmost point $z^{(k)}$ in the reduced pseudospectrum
is computed by means of Algorithm~\ref{alg:criss-cross} in line \ref{line:solve_red}. 
If the condition for termination is not met, then the subspace is expanded by
including a right singular vector of $P(z^{(k)})$ corresponding to its smallest 
singular value in line \ref{line:expand}.

\begin{algorithm}
\begin{algorithmic}[1]
	\REQUIRE{Quadratic matrix polynomial $P(\lambda)$ of the form (\ref{eq:mat_poly}) with large $\sn$,
				and $\epsilon > 0$.}
	\ENSURE{Estimates $\underline{x}$ for $\alpha_{\epsilon}(P)$, and $\underline{z} \in {\mathbb C}$
				for a rightmost point in $\Lambda_{\epsilon}(P)$.}	
	\vskip 1.2ex
	\STATE{$z^{(0)} \gets$ an eigenvalue of $P(\lambda)$.}\label{alg:init_points2_sf}
	\vskip .4ex
	\STATE{$V_0 \gets$ a unit norm right singular vector
								corresponding to $\sigma_{\min}(P(z^{(0)}))$.}\label{line:init_sub}
	\vskip 1.7ex
	\FOR{$k=1,2,\dots$}
	\vskip 1.7ex
	\STATE{\textbf{Solve Reduced Problem.} \\[.2em]
		Find a rightmost point  $z^{(k)}$ in $\Lambda^{{\mathcal V}_{k-1}}_{\epsilon}(P)$
		using Algorithm~\ref{alg:criss-cross}.  
	}\label{line:solve_red} 
	\vskip 1.7ex	
	\STATE{\textbf{Return} $\underline{x} \gets \text{Re}(z^{(k)}) \,$ and
	$\, \underline{z} \gets z^{(k)} \;$  if convergence occurred.} \label{alg:terminate_sf}
	\vskip 1.7ex
	\STATE{\textbf{Expand Subspace.} \\[.2em]
		Compute a right singular vector $v$
		corresponding to $\sigma_{\min}(P(z^{(k)}))$,
		and set \\[.2em]
		$V_k \gets 
			\text{orth}
			\left(
				\left[
					\begin{array}{cc}
						V_{k-1}	&	v
					\end{array}
				\right]
			\right)$.}\label{line:expand}
	\vskip 1.5ex
	\ENDFOR
\end{algorithmic}
\caption{Subspace framework for large-scale computation of $\alpha_{\epsilon}(P)$}
\label{alg:sub_fw}
\end{algorithm}
%
 \textbf{Convergence.}
	A variant of the subspace framework for the matrix setting in \cite{KreV14}
	converges to a locally rightmost point in the $\epsilon$-pseudospectrum. 
	We observe a similar local convergence behavior for Algorithm~\ref{alg:sub_fw}
	in the quadratic matrix polynomial setting. The choice of the
	initial projection subspace has an important role in attaining global convergence.
	To this end, the initialization in line \ref{line:init_sub} can possibly be done in a
	more sophisticated way. Additionally, it has been shown in \cite{KreV14} that
	$\text{Re}(z^{(k)})$ converges as $k \rightarrow \infty$ rather quickly at a superlinear rate. 
	Such a quick convergence is expected in our polynomial eigenvalue setting as well
	due to the interpolation properties that are indicated next.

\textbf{Interpolation.}
	Suppose that the computed rightmost point $z^{(\ell)}$ after the execution
	of line \ref{line:solve_red} at the $\ell$th iteration, $\ell \geq 1$,
	is such that $\sigma_{\min}(P(z^{(\ell)}))$ is 
	simple and nonzero.
	Using the variational characterization of the smallest singular value \cite{GolV96}, as well
	as the analytical formulas for the derivatives of singular value functions
	(in particular Lemma \ref{lem:sval_der}),
	it can be shown for Algorithm \ref{alg:sub_fw} that
	the Hermite interpolation properties
	\begin{equation}\label{eq:psa_comp_HI}
		\begin{split}
		&
		\sigma_{\min}(P(z^{(\ell)}))	=	\sigma_{\min}(P^{V_k}(z^{(\ell)}))	\,	,	\\[.2em]
		&
		\frac{\partial }{\partial \, \mre(z)} [\sigma_{\min}(P(z))]	\bigg|_{z = z^{(\ell)}}
				=
		\frac{\partial }{\partial \, \mre(z)} [\sigma_{\min}(P^{V_k}(z))]	\bigg|_{z = z^{(\ell)}}	\,	, \\[.2em]
		&
		\frac{\partial }{\partial \, \mri(z)} [\sigma_{\min}(P(z))]	\bigg|_{z = z^{(\ell)}}
				=
		\frac{\partial }{\partial \, \mri(z)} [\sigma_{\min}(P^{V_k}(z))]	\bigg|_{z = z^{(\ell)}}	\,
		\end{split}
	\end{equation}
	are satisfied for every $k \geq \ell$. Note that the interpolation of the singular values 
	$\sigma_{\min}(P(z^{(\ell)})) = \sigma_{\min}(P^{V_k}(z^{(\ell)}))$ holds 
	even if $\sigma_{\min}(P(z^{(\ell)}))$ is not simple or is zero.	
	
\textbf{Termination.}
	In practice, we terminate the algorithm in line \ref{alg:terminate_sf} when the
	change in $\underline{x}$ in two consecutive iterations is less than
	a tolerance in a relative sense.

\textbf{Large-scale requirements and computational cost}.
	The only large-scale problems whose solutions are required by Algorithm \ref{alg:sub_fw}
	are the computation of an eigenvalue of $P(\lambda)$ for initialization,
	and the singular vector computation corresponding to $\sigma_{\min}(P(z^{(0)}))$
	in line \ref{line:init_sub}, as well as corresponding to
	$\sigma_{\min}(P(z^{(k)}))$ in line \ref{line:expand} at every iteration.
	Both of these computations can be carried out using iterative methods 
	such as an appropriate Krylov subspace method \cite{arpack}, e.g. Arnoldi or Lanczos 
	method. In practice, we use \textsf{eigs}, an implementation of an implicitly restarted 
	Arnoldi method, in MATLAB. Krylov subspace methods are especially well-suited for large
	sparse eigenvalue and singular value problems, even though they may converge 
	to eigenvalues, singular values other than the targeted ones, or exhibit very slow 
	convergence in some cases. Clearly, reliability of Algorithm \ref{alg:sub_fw}
	relies on the reliability of Krylov subspace methods.
	In our setting here, 
     they usually work quite 
	effectively and reliably especially on problems arising from damping optimization.
	The most expensive computational task at every iteration of Algorithm \ref{alg:sub_fw}
	is the smallest singular value computation in line \ref{line:expand}. By using
	a Krylov subspace method, this usually has computational cost 
	${\mathcal O}(\sn^2)$ (possibly ${\mathcal O}(\sn)$ if the matrix polynomial
	is sparse), but it may be higher especially
	if the convergence of the Krylov subspace method is slow. Assuming a
	superlinear convergence of Algorithm \ref{alg:sub_fw}, the Krylov subspace
	method to compute the smallest singular value in line \ref{line:expand} is called 
	only a few times. As a result, it is reasonable to assume that the overall cost of
	Algorithm \ref{alg:sub_fw} is 
	${\mathcal O}(\sn^2)$, but possibly with a big constant hidden
	in the ${\mathcal O}$-notation.

\section{Large-Scale minimization of the $\epsilon$-pseudospectral abscissa}\label{sec:large_minimize} 
In this section, we discuss the minimization of $\alpha_{\epsilon}(P(\, \cdot \, ; \nu))$
over $\nu \in \underline{\Omega}$ in the large-scale setting, i.e.  when $\sn$ is large.
Similar to the ideas in the previous section, but now for minimization and
not for computation, the large parameter-dependent matrix polynomial 
$P(\, \lambda \, ; \nu)$ is replaced by
\[
	P^V(\lambda ; \nu)
		\;	:=	\;
	P(\lambda ; \nu) V
		\;	=	\;
	\lambda^2 M^V(\nu)	+	\lambda C^V(\nu)  + K^V(\nu) \, ,
\]
where
\begin{equation*}
	\begin{split}
	&
	M^V(\nu)
		\;\;	:=	\;\;
	M(\nu) V	
	\;	=	\;\:	\psi_{1}(\nu) (M_{1} V) +	\dots		+	\psi_{\sm}(\nu)	(M_{\sm} V)	\,	,	\\[.3em]
	& \,
	C^V(\nu)
		\;\;	:=	\;\;
	C(\nu) V	
\;\:	=	\;\:	\zeta_{1}(\nu) (C_{1} V) +	\dots		+	\zeta_{\scc}(\nu)	 (C_{\scc} V)	\,	,	\\[.3em]
	& \,
	K^V(\nu)
		\;\;	:=	\;\;
	K(\nu)	 V
	\;\,	=	\;\:	\kappa_{1}(\nu) (K_{1} V) +	\dots		+	\kappa_{\sk}(\nu)	 (K_{\sk} V)	\,	
	\end{split}
\end{equation*}
for a matrix $V \in {\mathbb C}^{\sn \times r}$ whose columns form an orthonormal basis 
for a small $r \ll \sn$ dimensional restriction subspace ${\mathcal V}$. We form 
$M_{i}V, \, C_{j}V, \, K_{\ell}V \in {\mathbb C}^{\sn\times r}$ for $i = 1, \dots , \sm$,
$j = 1, \dots , \scc$, $\: \ell = 1, \dots , \sk \,$ in advance, then minimize 
\begin{equation}\label{eq:red_obj_pars}
	\alpha^{\mathcal V}_{\epsilon}(P(\, \cdot \, ;\nu))
		\;	:=	\;
			\sup \{
					\mre (z)	\;	|	\;
					z \in {\mathbb C} \text{ s.t. }	
					\sigma_{\min}(P^V(z; \nu))	\leq \epsilon p_w(|z|) \}
\end{equation}
over $\nu \in \underline{\Omega}$, which we refer to as a \emph{reduced problem}. 
The value of $\alpha^{\mathcal V}_{\epsilon}(P(\, \cdot \,,\nu))$
at a given $\nu$, as well as the corresponding $z$ maximizing the right-hand side in (\ref{eq:red_obj_pars})
can be computed efficiently using a variant of Algorithm~\ref{alg:criss-cross} as discussed 
in Section \ref{sec:large_compute}.

We carry out the minimization of $\alpha_{\epsilon}(P(\, \cdot \, ; \nu))$ over 
$\nu \in \underline{\Omega}$ in the large-scale setting using the subspace restriction 
idea as follows. 
At every iteration, we solve a reduced $\epsilon$-pseudospectral abscissa minimization problem. 
Then a right-most point in $\Lambda_{\epsilon}(P(:,\nu))$ 
is computed at the minimizing parameter value for the reduced problem. 
Finally, we enrich the restriction subspace by a right
singular vector corresponding to $\sigma_{\min}(P(z; \nu))$ at
the minimizing parameter value $\nu$ and the computed
right-most point $z$. 

This subspace approach is formally presented in Algorithm \ref{alg:sub_fw_min}, with
${\mathcal V}_k := \text{Col}(V_k)$, and the iterates $\nu^{(k)}, z^{(k)}$ at the $k$th step.

\begin{algorithm}
\begin{algorithmic}[1]
	\REQUIRE{Quadratic matrix polynomial $P(\lambda; \nu)$ of the form (\ref{eq:par_dep_mat_poly}) 
	with large $\sn$, the feasible region $\underline{\Omega} \,$, and $\epsilon > 0$.}
	\ENSURE{Estimates $\underline{\nu}$ for 
	$\arg\min_{\nu\in\underline{\Omega}}\alpha_\epsilon (P(\, \cdot \, ; \nu))$, 
			and $\underline{z} \in {\mathbb C}$
				 for a globally rightmost point in $\Lambda_{\epsilon}(P(\, \cdot \, ; \underline{\nu}))$.}	
	\vskip 2.1ex
	\STATE{\textbf{Initialize.} \\[.2em]
	$\nu^{(0)}_1, \dots \nu^{(0)}_\eta \gets$ initially chosen points in $\underline{\Omega}$. \\[.2em]
	$z^{(0)}_j \gets$
			a rightmost point in $\Lambda_{\epsilon}(P(\, \cdot \, ; \nu^{(0)}_j))$
			using Algorithm  \ref{alg:sub_fw} for $j = 1, \dots , \eta$. \label{line:absk_sf_min0} \\[.2em]
	\hskip -.2ex $v^{(0)}_{j} \gets$ a right singular vector 
					corresponding to $\sigma_{\min}(P(z^{(0)}_j; \nu^{(0)}_j))$ 
					for $j = 1, \dots, \eta$. \\[.2em]	
	$V_0 \gets$ a matrix whose columns form an orthonormal basis for 
					span$\{v^{(0)}_{1} , \dots v^{(0)}_{\eta} \}$.}\label{defn:V0_sf_min} 
	\vskip 2.3ex
	\FOR{$k=1,2,\dots$}
	\vskip 2.1ex
	\STATE{\textbf{Solve Reduced Minimization Problem.} \\[.2em]
			$\nu^{(k)}\gets\arg\min_{\nu\in\underline{\Omega}}
				\alpha_\epsilon^{{\mathcal V}_{k-1}} (P(\, \cdot \, ; \nu))$.} \label{siter_start_sf_min}
	\vskip 2.3ex
	\STATE{$z^{(k)} \gets$ a rightmost point in $\Lambda_{\epsilon}(P(\, \cdot \, ; \nu^{(k)}))$
			using Algorithm  \ref{alg:sub_fw}.}\label{line:absk_sf_min}
	\vskip 2.3ex
	\STATE{\textbf{Return} $\underline{\nu} \gets \nu^{(k)}$, 
			$\underline{z} \gets z^{(k)}$ if convergence occurred.} \label{alg:terminate_sf_min}
	\vskip 2.1ex
	\STATE{\textbf{Expand Subspace.} \\[.2em]
		Compute a right singular vector $v^{(k)}$
		corresponding to $\sigma_{\min}(P(z^{(k)}; \nu^{(k)}))$, \\[.2em]
		and set 
		$V_k \gets 
			\text{orth}
			\left(
				\left[
					\begin{array}{cc}
						V_{k-1}	&	v^{(k)}
					\end{array}
				\right]
			\right)$.}\label{line:expand_sf_min}	
	\vskip 2.1ex
	\ENDFOR
\end{algorithmic}
\caption{Subspace framework for large-scale minimization of 
$\alpha_{\epsilon}(P(\, \cdot \, ;\nu))$}
\label{alg:sub_fw_min}
\end{algorithm}

In practice, we terminate Algorithm \ref{alg:sub_fw_min} in line~\ref{alg:terminate_sf_min} 
when the optimal values of the reduced minimization problems in two consecutive
iterations do not differ by more than a prescribed
tolerance. A remarkable feature of Algorithm \ref{alg:sub_fw_min} is that
it does not require the computation of all eigenvalues of large matrix polynomials. 
The only required large-scale computations are 
the computation of the right singular vector corresponding to the smallest 
singular value of large matrices in lines \ref{defn:V0_sf_min}, \ref{line:expand_sf_min},
and in Algorithm~\ref{alg:sub_fw}, as well as
the computation of an eigenvalue of a large matrix polynomial to
initialize Algorithm~\ref{alg:sub_fw}.
In practice, we perform these smallest singular value and eigenvalue computations
by means of Krylov subspace methods  as noted at the end of Section \ref{sec:large_compute}. The limitations of the Krylov subspace methods listed
there apply, but in practice we usually observe good convergence behavior  which is due to the fact that in the context of the optimization we usually have very good starting values.

The main ingredients that determine the runtime of Algorithm \ref{alg:sub_fw_min} are
\vskip 1ex
\begin{enumerate}
	\item[(i)$\:$] the solution of the reduced minimization problem in line \ref{siter_start_sf_min}, 
	\item[(ii)$\,$] the $\epsilon$-pseudospectral abscissa computation on the full 
	problem via Algorithm~\ref{alg:sub_fw} in line \ref{line:absk_sf_min}, 
	\item[(iii)]  the smallest singular value computation on the full problem in line
	\ref{line:expand_sf_min}
\end{enumerate}
\vskip 1ex
at every iteration. As argued in Section \ref{sec:large_compute}, it is reasonable to assume
both (ii) and (iii) have ${\mathcal O}(\sn^2)$ computational cost. However,
Algorithm~\ref{alg:sub_fw} itself computes the smallest singular value a few times, 
so (ii) is usually more expensive than (iii). Thus, the overall cost of Algorithm \ref{alg:sub_fw_min} 
is mainly determined by the cost of (i) and (ii) times the number of iterations of
Algorithm \ref{alg:sub_fw_min}. For large $\sn$, we expect (ii) to dominate, as (i)
involves only small reduced problems, that is the overall computational cost may
largely be determined by the number of calls to Algorithm~\ref{alg:sub_fw}.


\subsection{Solution of reduced minimization problems}
Just as for the full $\epsilon$-pseudospectral abscissa function,
the objective to be minimized $\alpha^{{\mathcal V}}_{\epsilon}(\nu) 
			:= \alpha^{{\mathcal V}}_{\epsilon}(P(\, \cdot \, ; \nu))$
for a reduced minimization problem is nonconvex and nonsmooth. Thus, 
$\alpha^{{\mathcal V}}_{\epsilon}(\nu)$ can again be minimized globally using {\EIGOPT}
if there are only a few parameters, or locally using {\GRANSO} if there are many parameters.
We again use the variant of Algorithm~\ref{alg:criss-cross} 
discussed in Section \ref{sec:large_compute} 
to evaluate the objective function $\alpha^{{\mathcal V}}_{\epsilon}(\nu)$,
and to determine a right-most point in 
$\Lambda^{{\mathcal V}}_{\epsilon}(P(\, \cdot \, ; \nu))$ from which
the gradient of $\alpha^{{\mathcal V}}_{\epsilon}(\nu)$ can be obtained as shown next. 
The arguments here are analogous to those for the full $\epsilon$-pseudospectral abscissa function.

The reduced $\epsilon$-pseudospectral abscissa function $\alpha^{{\mathcal V}}_{\epsilon}(\nu)$
can be written as
\begin{equation}\label{eq:psa_red_opt}
	\alpha^{\mathcal V}_{\epsilon}(\nu)
		\;	=	\;	
	\max \{ s_1	\;	|	\;
				s = (s_1, s_2) \in {\mathbb R}^2		\;	\text{ is s.t. }	\;
				\sigma^{\mathcal V}(\nu,s) - \epsilon p_w( \| s \|)	\leq 0	\}	\,	,
\end{equation}
where $\sigma^{\mathcal V}(\nu,s) := \sigma_{\min}(P^V(\nu,s))$, 
$P^V(\nu,s) := P^V(s_1 + {\rm i} s_2 ; \nu)$ for $s = (s_1, s_2) \in {\mathbb R}^2$ 
and for any matrix $V$
whose columns form an orthonormal basis for ${\mathcal V}$. The associated 
Lagrangian function is
\[
	{\mathcal L}^{\mathcal V}(\nu,s,\mu)	\;	:=	\;	s_1	-	
	\mu \{ \sigma^{\mathcal V}(\nu,s) - \epsilon p_w( \| s \|) \}	\,	,
\]
which we express as ${\mathcal L}^{\mathcal V}(\nu, y)$ where again
$y = (s, \mu)$. We reserve the notations ${\mathcal L}^{\mathcal V}_y$ and 
${\mathcal L}^{\mathcal V}_{yy}$
for the gradient and the Hessian of ${\mathcal L}^{\mathcal V}$ with respect to $y$.
Assuming the uniqueness of the maximizer $s^{\mathcal V}(\nu)$
for the maximization problem in (\ref{eq:psa_red_opt}), the simplicity of the singular value
$\sigma^{\mathcal V}(\nu,s^{\mathcal V}(\nu))$, and 
the satisfaction of a constraint qualification for (\ref{eq:psa_red_opt})
at $s = s^{\mathcal V}(\nu)$,
there is a unique $\mu^{\mathcal V}(\nu)$ satisfying
${\mathcal L}^{\mathcal V}_y(\nu,s^{\mathcal V}(\nu),\mu^{\mathcal V}(\nu)) = 0$. 
Similar to the practice before, we use 
$y^{\mathcal V}(\nu) := (s^{\mathcal V}(\nu), \mu^{\mathcal V}(\nu))$.

As in the non-reduced case, a point $\nu \in \Omega$ is called a non-degenerate point 
of the restriction of 
$P(\, \cdot \, ; \, \cdot \,)$ to a subspace ${\mathcal V}$ if there is a unique optimizer of (\ref{eq:psa_red_opt}), denoted as 
			$s^{\mathcal V}(\nu) = ( s^{\mathcal V}_1(\nu) , s^{\mathcal V}_2(\nu) )$,
the singular value $\sigma^{\mathcal V}(\nu,s^{\mathcal V}(\nu))$ 
	is simple, and
the Hessian 
		${\mathcal L}^{\mathcal V}_{yy}(\nu,y^{\mathcal V}(\nu))$
		is invertible.

The reduced problem satisfies an analogue of 
Theorem \ref{thm:poly_psa_der} but now
by operating on ${\mathcal L}^{\mathcal V}(\nu,y)$ rather than ${\mathcal L}(\nu,y)$.
Similar to the requirement in Theorem \ref{thm:poly_psa_der} on the gradient
$g_s$ of $g$ defined as in (\ref{eq:defn_g2}), we need a requirement here but now in terms of
\[
	g^{\mathcal V}(s; \nu)  \; := \;   \sigma^{\mathcal V}(\nu,s) - \epsilon p_w( \| s \|) ,
\]
in particular in terms of its gradient with respect to $s$, which we denote by
$g^{\mathcal V}_s$. We also use $g^{\mathcal V}_{ss}$ to denote the Hessian of $g^{\mathcal V}$
with respect to $s$. Note that, as in the non-reduced case, the first two conditions 
for the non-degeneracy of a point $\nu$ of the restriction of $P(\, \cdot \, ; \, \cdot \,)$ to ${\mathcal V}$
combined with the assumptions that $g^{\mathcal V}_{ss}(s^{\mathcal V}(\nu); \nu)$ is invertible
and $g^{\mathcal V}_s(s^{\mathcal V}(\nu); \nu) \neq 0$ imply the invertibility of 
${\mathcal L}^{\mathcal V}_{yy}(\nu,y^{\mathcal V}(\nu))$.
\begin{theorem}\label{thm:poly_red_psa_der}
Let ${\mathcal V}$ be a subspace of ${\mathbb C}^n$, and $V$ be any matrix whose
columns form an orthonormal basis for ${\mathcal V}$. 
Moreover, let $\, \widetilde{\nu}$ be a 
non-degenerate point for the restriction of $P(\, \cdot \, ; \, \cdot \,)$ to 
${\mathcal V}$, and suppose that $p_w( \| s^{\mathcal V}(\widetilde{\nu}) \|) \neq 0 \,$, 
$\, g^{\mathcal V}_s(s^{\mathcal V}(\widetilde{\nu}); \widetilde{\nu}) \neq 0$. 
The function $\nu \mapsto \alpha^{\mathcal V}_{\epsilon}(\nu)$
is real analytic at $\widetilde{\nu}$ with the first derivatives
\begin{equation}
\label{eq:red_psa_der}
	\frac{\partial \alpha^{\mathcal V}_{\epsilon}}{\partial \nu_j} (\widetilde{\nu})
		\;	=	\;
	-\mu^{\mathcal V}(\widetilde{\nu})
\mre \left\{ u^\ast P^V_{\nu_j}(\widetilde{\nu} , s^{\mathcal V}(\widetilde{\nu})) v \right\} \,	,
	\;\;\;\;
	j = 1, \dots , \sd		\,	,
\end{equation}
where $P^V_{\nu_j}$ denotes the partial derivative of $P^V$ with respect to $\nu_j$,
\begin{equation}\label{eq:red_lagrange_mult}
	\mu^{\mathcal V}(\widetilde{\nu})
		\;	:=	\;
	\frac{1}{\mre\{ u^\ast P'(s^{\mathcal V}(\widetilde{\nu}) ; \widetilde{\nu}) V v \} - 
	\epsilon \{ s^{\mathcal V}_1(\widetilde{\nu}) /  \| s^{\mathcal V}(\widetilde{\nu}) \| \}  
								p'_w( \| s^{\mathcal V}(\widetilde{\nu}) \|)}	\,	,
\end{equation}
and
$u, v$ are consistent unit left, right singular vectors, respectively, corresponding
 to the smallest singular value $\sigma^{\mathcal V}(\widetilde{\nu} , s^{\mathcal V}(\widetilde{\nu}))$ of 
$P^V(\widetilde{\nu} , s^{\mathcal V}(\widetilde{\nu}))$.
\end{theorem}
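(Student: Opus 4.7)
The plan is to mimic the proof of Theorem \ref{thm:poly_psa_der} verbatim, making the systematic substitution $P \rightsquigarrow P^V$, $\sigma \rightsquigarrow \sigma^{\mathcal V}$, $g \rightsquigarrow g^{\mathcal V}$, $\mathcal L \rightsquigarrow \mathcal L^{\mathcal V}$, and $y(\nu) \rightsquigarrow y^{\mathcal V}(\nu)$ everywhere. Because $V$ is a fixed (parameter-independent) matrix with orthonormal columns, the rectangular polynomial $P^V(\lambda;\nu) = \lambda^2 M(\nu)V + \lambda C(\nu) V + K(\nu) V$ inherits from $P$ the two structural properties needed: the real and imaginary parts of $P^V$ are real-analytic in $(\nu,s_1,s_2)$ (polynomial in $s_1,s_2$, and real-analytic in $\nu$ through $\psi_i,\zeta_j,\kappa_\ell$), and $P^V_{\nu_j}(\,\cdot\,;\nu) = P_{\nu_j}(\,\cdot\,;\nu) V$, so Lemma \ref{lem:sval_der} is directly applicable to $\sigma^{\mathcal V}(\nu,s)$ wherever this smallest singular value is simple and nonzero.

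First I would argue that $\sigma^{\mathcal V}(\widetilde{\nu},s^{\mathcal V}(\widetilde{\nu}))$ is not only simple (by non-degeneracy) but also strictly positive: if it vanished, continuity would produce a whole ball of feasible points around $s^{\mathcal V}(\widetilde{\nu})$, contradicting the maximality of $s^{\mathcal V}_1(\widetilde{\nu})$. Invoking Lemma \ref{lem:sval_der}, I then obtain open neighborhoods $\widetilde{U}\subseteq\mathbb R^{\sd}$ of $\widetilde\nu$ and $\widehat{U}\subseteq\mathbb R^{2}$ of $s^{\mathcal V}(\widetilde\nu)$ on which $\sigma^{\mathcal V}(\nu,s)$, and hence $g^{\mathcal V}(s;\nu)$, is real-analytic, with $p_w(\|s\|)>0$ and $g^{\mathcal V}_s(s;\nu)\ne 0$ throughout (shrinking if necessary). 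Upper semicontinuity of the solution mapping \cite[Prop.~4.41]{BonS00} gives a neighborhood $U\subseteq\widetilde U$ of $\widetilde\nu$ on which every maximizer of (\ref{eq:psa_red_opt}) lies in $\widehat U$; the LICQ is satisfied there since $g^{\mathcal V}_s\neq 0$, so the first-order necessary conditions yield a unique Lagrange multiplier $\mu^{\mathcal V}(\vartheta)$ with $\mathcal L^{\mathcal V}_y(\vartheta,y^{\mathcal V}(\vartheta))=0$ for $\vartheta\in U$.

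Next, because the Hessian $\mathcal L^{\mathcal V}_{yy}(\widetilde\nu,y^{\mathcal V}(\widetilde\nu))$ is invertible by hypothesis, the analytic implicit function theorem applied to $\mathcal L^{\mathcal V}_y(\nu,w)=0$ produces a real-analytic function $w:U_0\subseteq U\to\widehat U\times\mathbb R$ with $w(\widetilde\nu)=y^{\mathcal V}(\widetilde\nu)$. Uniqueness of the multiplier (and uniqueness of the maximizer at $\widetilde\nu$, extended to $U_0$ by analyticity/uniqueness of $w$) forces $y^{\mathcal V}(\nu)=w(\nu)$ on $U_0$, giving real analyticity of $\nu\mapsto y^{\mathcal V}(\nu)$, and in particular of $\alpha^{\mathcal V}_{\epsilon}(\nu)=s^{\mathcal V}_1(\nu)$. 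The expression (\ref{eq:red_lagrange_mult}) for $\mu^{\mathcal V}(\widetilde\nu)$ then comes from writing out the $s_1$-component of $\mathcal L^{\mathcal V}_y=0$, using Lemma \ref{lem:sval_der} to evaluate $\sigma^{\mathcal V}_{s_1}$ as $\mre\{u^\ast (P^V)'(s^{\mathcal V}(\widetilde\nu);\widetilde\nu) v\}=\mre\{u^\ast P'(s^{\mathcal V}(\widetilde\nu);\widetilde\nu) V v\}$, and computing $\partial[p_w(\|s\|)]/\partial s_1$ by the chain rule. Finally, by complementarity (since $\mu^{\mathcal V}\ne 0$) we have $g^{\mathcal V}(s^{\mathcal V}(\nu);\nu)=0$ on $U_0$, so $\alpha^{\mathcal V}_{\epsilon}(\nu)=\mathcal L^{\mathcal V}(\nu,y^{\mathcal V}(\nu))$; differentiating and using $\mathcal L^{\mathcal V}_y(\widetilde\nu,y^{\mathcal V}(\widetilde\nu))=0$ leaves only the explicit $\nu_j$-derivative, which Lemma \ref{lem:sval_der} evaluates as $\mre\{u^\ast P^V_{\nu_j}(\widetilde\nu,s^{\mathcal V}(\widetilde\nu)) v\}$, giving (\ref{eq:red_psa_der}).

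I do not anticipate a serious obstacle: the only place where something new has to be checked, rather than copied, is that Lemma \ref{lem:sval_der} applies to the rectangular matrix-valued function $P^V$, and that the implicit function theorem is invoked correctly in the rectangular setting. Both follow trivially from $V$ being constant, so the argument is really a transcription of the non-reduced case with $P$ replaced by $P V$ and $P'$ replaced by $P' V$ in the numerator of the Lagrange-multiplier formula.
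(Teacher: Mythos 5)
Your proposal is correct and is exactly the argument the paper intends: the paper's own proof of Theorem~\ref{thm:poly_red_psa_der} simply states that it is analogous to that of Theorem~\ref{thm:poly_psa_der}, and your transcription with $P \rightsquigarrow P^V$, $\sigma \rightsquigarrow \sigma^{\mathcal V}$, ${\mathcal L} \rightsquigarrow {\mathcal L}^{\mathcal V}$ (noting that Lemma~\ref{lem:sval_der} covers rectangular matrix-valued functions and that $(P^V)' = P'V$, $P^V_{\nu_j} = P_{\nu_j}V$ with $V$ constant) carries through without any gap.
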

\begin{proof}
    The proof is similar to that for Theorem \ref{thm:poly_psa_der}.
\end{proof}
\subsection{Interpolation properties}
We now show that the subspace framework in Algorithm~\ref{alg:sub_fw_min} is
interpolatory to the full problem. 
Using the abbreviations
 $
	\Lambda_{\epsilon}(\nu)			:=		\Lambda_{\epsilon}(P(\, \cdot \, ; \nu))$	
		and	
$\Lambda^{{\mathcal V}_k}_{\epsilon}(\nu)			:=		
				\Lambda^{{\mathcal V}_k}_{\epsilon}(P(\, \cdot \, ; \nu))$,
we have the following result.
\begin{theorem}[Hermite Interpolation]\label{thm:hermite_interpolate}
Consider the subspaces and sequences generated by Algorithm \ref{alg:sub_fw_min}.
Then the following assertions hold for every $k \geq 1$ and $\ell = 1, \dots , k$:
\begin{enumerate}
	\item $\alpha_{\epsilon}(\nu^{(\ell)}) = \alpha^{{\mathcal V}_k}_{\epsilon}(\nu^{(\ell)})$;
	\item The point $z^{(\ell)}$ is a rightmost point in 
			$\Lambda^{{\mathcal V}_k}_{\epsilon}(\nu^{(\ell)})$; 
	\item Suppose $\nu^{(\ell)}$ is a non-degenerate point such that
	$p_w( \| \widehat{z}^{(\ell)} \|) \neq 0$, 
					$g_s(\widehat{z}^{(\ell)}; \nu^{(\ell)}) \neq 0$ for
	$\widehat{z}^{(\ell)} := (\text{Re}(z^{(\ell)}), \text{Im}(z^{(\ell)}))$,
	and the Hessian ${\mathcal L}^{{\mathcal V}_k}_{yy}
			(\nu^{(\ell)},\widehat{z}^{(\ell)}, \mu^{{\mathcal V}_k}(\nu^{(\ell)}))$
		is invertible, where
		$\mu^{{\mathcal V}_k}(\nu^{(\ell)})$ is as in (\ref{eq:red_lagrange_mult})
		but with $\nu^{(\ell)},\widehat{z}^{(\ell)}$ replacing 
		$\widetilde{\nu} , s^{\mathcal V}(\widetilde{\nu})$.
	Then $\alpha_{\epsilon}(\nu)$,
	$\alpha^{{\mathcal V}_k}_{\epsilon}(\nu)$ are differentiable at $\nu = \nu^{(\ell)}$,
	and $\nabla \alpha_{\epsilon}(\nu^{(\ell)}) = \nabla \alpha^{{\mathcal V}_k}_{\epsilon}(\nu^{(\ell)})$.
\end{enumerate} 
\end{theorem}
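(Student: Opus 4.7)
The plan is to prove the three assertions in order, leveraging the nested-subspace structure of Algorithm \ref{alg:sub_fw_min} together with the monotonicity and low-dimensionality results (Theorems \ref{thm:monotonicity} and \ref{thm:low_dimensionality}), and then to obtain the gradient equality from Theorems \ref{thm:poly_psa_der} and \ref{thm:poly_red_psa_der} via careful bookkeeping of singular vectors. The key structural fact, exploited throughout, is that line \ref{line:expand_sf_min} of Algorithm \ref{alg:sub_fw_min} places $v^{(\ell)}$, a unit right singular vector of $P(z^{(\ell)}; \nu^{(\ell)})$ corresponding to $\sigma_{\min}(P(z^{(\ell)}; \nu^{(\ell)}))$, into ${\mathcal V}_{\ell}$, so that $v^{(\ell)} \in {\mathcal V}_k$ for every $k \geq \ell$.

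For assertion (1), I would apply Theorem \ref{thm:low_dimensionality} to the fixed polynomial $P(\,\cdot\,; \nu^{(\ell)})$ with subspace ${\mathcal V}_k$. Since $z^{(\ell)}$ is a rightmost point of $\Lambda_{\epsilon}(\nu^{(\ell)})$ (line \ref{line:absk_sf_min}) and $v^{(\ell)} \in {\mathcal V}_k$ is an associated right singular vector, condition (2) of that theorem is met, yielding $\alpha_{\epsilon}(\nu^{(\ell)}) = \alpha^{{\mathcal V}_k}_{\epsilon}(\nu^{(\ell)})$. For assertion (2), write $v^{(\ell)} = V_k w^{(\ell)}$ with $w^{(\ell)}$ a unit vector. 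Then
\[
	\sigma_{\min}(P^{V_k}(z^{(\ell)};\nu^{(\ell)}))
	\leq \|P(z^{(\ell)};\nu^{(\ell)}) V_k w^{(\ell)}\|
	= \sigma_{\min}(P(z^{(\ell)};\nu^{(\ell)}))
	\leq \epsilon p_w(|z^{(\ell)}|),
\]
so $z^{(\ell)} \in \Lambda^{{\mathcal V}_k}_{\epsilon}(\nu^{(\ell)})$; combined with assertion (1) and the monotonicity inclusion $\Lambda^{{\mathcal V}_k}_{\epsilon}(\nu^{(\ell)}) \subseteq \Lambda_{\epsilon}(\nu^{(\ell)})$, this identifies $z^{(\ell)}$ as a rightmost point of $\Lambda^{{\mathcal V}_k}_{\epsilon}(\nu^{(\ell)})$.

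For assertion (3), the plan is to invoke Theorem \ref{thm:poly_red_psa_der} at $\widetilde{\nu} = \nu^{(\ell)}$ on the subspace ${\mathcal V}_k$, and then match the resulting gradient to the one furnished by Theorem \ref{thm:poly_psa_der}. The delicate step is to transfer the reduced non-degeneracy conditions from the hypotheses stated for the full problem. The invertibility of ${\mathcal L}^{{\mathcal V}_k}_{yy}$ is assumed directly. Uniqueness of the reduced maximizer $s^{{\mathcal V}_k}(\nu^{(\ell)}) = \widehat{z}^{(\ell)}$ follows from monotonicity: any other reduced maximizer $s'$ with $s'_1 = \alpha^{{\mathcal V}_k}_{\epsilon}(\nu^{(\ell)}) = \alpha_{\epsilon}(\nu^{(\ell)})$ would also yield a rightmost point $s'_1 + {\rm i}s'_2 \in \Lambda_{\epsilon}(\nu^{(\ell)})$, contradicting the uniqueness of the full optimizer granted by the non-degeneracy of $\nu^{(\ell)}$. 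Simplicity of $\sigma^{{\mathcal V}_k}(\nu^{(\ell)}, \widehat{z}^{(\ell)})$ follows similarly: any unit right singular vector $\tilde{w}$ of $P^{V_k}(\widehat{z}^{(\ell)};\nu^{(\ell)})$ at its smallest singular value satisfies $\|P(\widehat{z}^{(\ell)};\nu^{(\ell)})(V_k\tilde{w})\| = \sigma_{\min}(P(\widehat{z}^{(\ell)};\nu^{(\ell)}))$, so $V_k \tilde{w}$ is a right singular vector of $P$ at a singular value assumed simple, forcing $\tilde{w}$ to be a unit scalar multiple of $w^{(\ell)} = V_k^\ast v^{(\ell)}$. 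The condition $g^{{\mathcal V}_k}_s(\widehat{z}^{(\ell)}; \nu^{(\ell)}) \neq 0$ then transfers from the assumption $g_s(\widehat{z}^{(\ell)}; \nu^{(\ell)}) \neq 0$ by applying the analytic derivative formula \eqref{eq:sval_der}, which gives identical values for the first $s$-derivatives of $\sigma$ and $\sigma^{{\mathcal V}_k}$ at $\widehat{z}^{(\ell)}$ (same left singular vector $u$, and right singular vectors $v^{(\ell)}$ and $w^{(\ell)}$ related by $V_k w^{(\ell)} = v^{(\ell)}$).

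With the hypotheses of Theorem \ref{thm:poly_red_psa_der} verified, the final step is a direct comparison of the two gradient formulas. Using $P^{V_k}_{\nu_j} = P_{\nu_j} V_k$ and $v^{(\ell)} = V_k w^{(\ell)}$, one obtains
\[
	{\rm Re}\{u^\ast P^{V_k}_{\nu_j}(\nu^{(\ell)},\widehat{z}^{(\ell)}) w^{(\ell)}\}
	\;=\; {\rm Re}\{u^\ast P_{\nu_j}(\nu^{(\ell)},\widehat{z}^{(\ell)}) v^{(\ell)}\},
\]
and the same identity applied to $P'(\,\cdot\,;\nu^{(\ell)})$ shows that the denominators in \eqref{eq:lagrange_mult} and \eqref{eq:red_lagrange_mult} coincide, yielding $\mu^{{\mathcal V}_k}(\nu^{(\ell)}) = \mu(\nu^{(\ell)})$. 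Combining these equalities delivers $\nabla \alpha^{{\mathcal V}_k}_{\epsilon}(\nu^{(\ell)}) = \nabla \alpha_{\epsilon}(\nu^{(\ell)})$. The main obstacle, as anticipated, is the careful verification of reduced non-degeneracy (especially uniqueness and simplicity) from the full non-degeneracy assumption; once this transfer is in place, the rest is bookkeeping at the level of explicit formulas.
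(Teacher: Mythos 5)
Your proposal is correct and follows essentially the same route as the paper: parts~1 and~2 via Theorem~\ref{thm:low_dimensionality}, monotonicity, and the inequality chain through $\|P(z^{(\ell)};\nu^{(\ell)})V_k w^{(\ell)}\|$, and part~3 by transferring non-degeneracy to the reduced problem (uniqueness of the reduced rightmost point via the inclusion $\Lambda^{{\mathcal V}_k}_{\epsilon}\subseteq\Lambda_{\epsilon}$, simplicity of $\sigma_{\min}(P^{V_k})$, $g^{{\mathcal V}_k}_s\neq 0$ from the derivative interpolation) and then matching the formulas \eqref{eq:psa_der}--\eqref{eq:lagrange_mult} with \eqref{eq:red_psa_der}--\eqref{eq:red_lagrange_mult} using $v^{(\ell)}=V_kw^{(\ell)}$. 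The only deviation is minor: for simplicity of the reduced smallest singular value you use the equality $\sigma_{\min}(P^{V_k})=\sigma_{\min}(P)$ directly (the interpolation already noted in \eqref{eq:psa_comp_HI}) instead of the paper's detour through the KKT/complementarity argument showing the constraint is active, which is a slightly more direct version of the same idea.
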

\begin{proof} 
1. The subspace ${\mathcal V}_k$ contains  a right singular vector $v^{(\ell)}$
	corresponding to $\sigma_{\min}(P(z^{(\ell)}; \nu^{(\ell)}))$, where $z^{(\ell)}$ is
	a rightmost point in $\Lambda_{\epsilon}(\nu^{(\ell)})$. Hence, the assertion
	$\alpha_{\epsilon}(\nu^{(\ell)}) = \alpha^{{\mathcal V}_k}_{\epsilon}(\nu^{(\ell)})$
	follows from Theorem \ref{thm:low_dimensionality}.
	
	\medskip
	
\noindent
2. By Theorem \ref{thm:monotonicity}, part 2., we have 
	\begin{equation}\label{eq:inter_ineq1}
		\alpha^{{\mathcal V}_k}_{\epsilon}(\nu^{(\ell)})
			\;	\leq	\;
		\alpha_{\epsilon}(\nu^{(\ell)}) = \mre (z^{(\ell)} )	\,	.
	\end{equation}
	Moreover, as $v^{(\ell)} \in {\mathcal V}_k$, there exists a vector $a$ such that $v^{(\ell)} = V_k a$.
	Combining this with $z^{(\ell)} \in \Lambda_{\epsilon}(\nu^{(\ell)})$, we obtain
\begin{equation*}
	\begin{split}
\epsilon	\;	\geq	\;	
\frac{\sigma_{\min}(P(z^{(\ell)};\nu^{(\ell)}))}{p_w( | z^{(\ell)} |)}
	\;	&	=	\;
\frac{\| P(z^{(\ell)};\nu^{(\ell)}) V_k a\|}{p_w( | z^{(\ell)} |)}	\\
	&	\geq		\;
	\frac{ \sigma_{\min}( P^{V_k}(z^{(\ell)};\nu^{(\ell)})) }{p_w( | z^{(\ell)} |)}	\,	,
\end{split}
\end{equation*}
where $\| \cdot \|$ denotes the Euclidean norm in ${\mathbb C}^{\sn}$.
This shows that  $z^{(\ell)} \in \Lambda^{{\mathcal V}_k}_{\epsilon}(\nu^{(\ell)})$, implying that
\begin{equation}\label{eq:inter_ineq2}
\alpha^{{\mathcal V}_k}_{\epsilon}(\nu^{(\ell)})
\;	\geq	\;
\alpha_{\epsilon}(\nu^{(\ell)}) = \mre (z^{(\ell)} )	\,	.	
\end{equation}
Combining (\ref{eq:inter_ineq1}) and (\ref{eq:inter_ineq2}),
	we deduce that $z^{(\ell)} \in \Lambda^{{\mathcal V}_k}_{\epsilon}(\nu^{(\ell)})$
	satisfies $\alpha^{{\mathcal V}_k}_{\epsilon}(\nu^{(\ell)})
			 = \mre (z^{(\ell)} )$ as claimed.

\medskip

\noindent			 
3. As $\nu^{(\ell)}$ is a non-degenerate point, 
and $g_s(\widehat{z}^{(\ell)}; \nu^{(\ell)}) \neq 0$ for
	$\widehat{z}^{(\ell)} := (\text{Re}(z^{(\ell)}), \text{Im}(z^{(\ell)}))$,
it follows from Theorem \ref{thm:poly_psa_der}
that $\alpha_{\epsilon}(\nu)$ is
	differentiable at $\nu = \nu^{(\ell)}$.
	
	
	We claim that $\alpha^{{\mathcal V}_k}_{\epsilon}(\nu)$ is also differentiable 
	at $\nu = \nu^{(\ell)}$. To show this, recall by part 2. that $z^{(\ell)}$ is a rightmost point of 
	$\Lambda^{{\mathcal V}_k}_{\epsilon}(\nu^{(\ell)})$. Suppose 
	there is another rightmost point $\widetilde{z}$ of $\Lambda^{{\mathcal V}_k}_{\epsilon}(\nu^{(\ell)})$
	satisfying $\widetilde{z} \in \Lambda^{{\mathcal V}_k}_{\epsilon}(\nu^{(\ell)})$
	and 
	\[
		\mre (\widetilde{z}) = \mre (z^{ (\ell)} ) =  \alpha^{{\mathcal V}_k}_{\epsilon}(\nu^{(\ell)})
					=	 \alpha_{\epsilon}(\nu^{(\ell)}).
	\]
	Due to Theorem \ref{thm:monotonicity}, part 1.,  we have
	$\widetilde{z} \in \Lambda_{\epsilon}(\nu^{(\ell)})$. Hence, $\widetilde{z}$ is also
	a rightmost point of $\Lambda_{\epsilon}(\nu^{(\ell)})$ in addition to $z^{(\ell)}$,
	a contradiction. This shows that $z^{(\ell)}$ is the unique rightmost point of 
	$\Lambda^{{\mathcal V}_k}_{\epsilon}(\nu^{(\ell)})$ as well.

	For the proof of differentiability of $\alpha^{{\mathcal V}_k}_{\epsilon}(\nu)$
	at $\nu = \nu^{(\ell)}$, we also need to show that
	$\sigma_{\min}( P^{V_k}(z^{(\ell)};\nu^{(\ell)}))$ is simple. For the sake of contradiction, suppose otherwise
	that this singular value is not simple. Then
	it follows from the first-order optimality conditions, in particular from 
	the analogue of (\ref{eq:KKT_poly_psa}) 
	where $\nu^{(\ell)}$ replaces $\vartheta$,
	that the Lagrange  multiplier $\mu(\nu^{(\ell)})$ is nonzero. Thus, by the complementarity
	condition $\mu(\nu^{\ell}) g(z^{(\ell)} ; \nu^{(\ell)}) = 0$ \cite[Theorem 12.1]{Nocedal2006},
	as $z^{(\ell)}$ is the maximizer of the optimization problem (\ref{eq:psa_opt})
	with $\nu = \nu^{(\ell)}$, we have $g(z^{(\ell)} ; \nu^{(\ell)}) = 0$
	or equivalently
	$
			\frac{\sigma_{\min}(P(z^{(\ell)};\nu^{(\ell)}))}{p_w( | z^{(\ell)} |)} = \epsilon	
	$.
	Combining this with
	\begin{equation*}
		\begin{split}
		\epsilon	\;	\geq	\;	
			\frac{\sigma_{\min}(P^{V_k}(z^{(\ell)};\nu^{(\ell)}))}{p_w( | z^{(\ell)} |)}
				\;	&	=	\;
			\frac{\| P(z^{(\ell)};\nu^{(\ell)}) V_k a\|}{p_w( | z^{(\ell)} |)}	
				\;	\geq		\;
			\frac{ \sigma_{\min}( P(z^{(\ell)};\nu^{(\ell)})) }{p_w( | z^{(\ell)} |)}	\,	,
		\end{split}
	\end{equation*}
	where $a$ is a unit norm right singular vector 
	corresponding to $\sigma_{\min}( P^{V_k}(z^{(\ell)};\nu^{(\ell)}))$,
	we deduce that
	\[
		\frac{ \sigma_{\min}( P(z^{(\ell)};\nu^{(\ell)})) }{p_w( | z^{(\ell)} |)}
			\;	=	\;
		\frac{\sigma_{\min}(P^{V_k}(z^{(\ell)};\nu^{(\ell)}))}{p_w( | z^{(\ell)} |)}
			\;	=	\;
		\epsilon	\,	.
	\]
	Since $\sigma_{\min}( P^{V_k}(z^{(\ell)};\nu^{(\ell)}))$ is not simple, there is 
	another unit right singular vector $\widetilde{a}$ 
	corresponding to $\sigma_{\min}( P^{V_k}(z^{(\ell)};\nu^{(\ell)}))$
	such that $a$ and $\widetilde{a}$ are orthogonal. But then
	\begin{equation*}
		\begin{split}
		\sigma_{\min}( P(z^{(\ell)};\nu^{(\ell)}))
			\;	&	=	\;
		\sigma_{\min}(P^{V_k}(z^{(\ell)};\nu^{(\ell)}))	\\
			\;	& 	=	\;
		\| P(z^{(\ell)};\nu^{(\ell)}) V_k a\|
			\;	 	=	\;
		\| P(z^{(\ell)};\nu^{(\ell)}) V_k \widetilde{a}\| \, ,
		\end{split}
	\end{equation*}
	yielding the contradiction that $V_k a$, $V_k \widetilde{a}$ are orthogonal,
	right singular vectors corresponding to $\sigma_{\min}( P(z^{(\ell)};\nu^{(\ell)}))$,
	that is $\sigma_{\min}( P(z^{(\ell)};\nu^{(\ell)}))$ is not simple.
	
	As $z^{(\ell)}$ is the unique rightmost point in 
	$\Lambda^{{\mathcal V}_k}_{\epsilon}(\nu^{(\ell)})$,
	the smallest singular value $\sigma_{\min}( P^{V_k}(z^{(\ell)};\nu^{(\ell)}))$ is simple, and
	the Hessian ${\mathcal L}^{{\mathcal V}_k}_{yy}
			(\nu^{(\ell)},\widehat{z}^{(\ell)}, \mu^{{\mathcal V}_k}(\nu^{(\ell)}))$
	is invertible by assumption, $\nu^{(\ell)}$ is a non-degenerate point for the
	restriction of $P(\cdot ; \cdot)$ to ${\mathcal V}_k$. Additionally,
	by the interpolation of the derivatives in (\ref{eq:psa_comp_HI}), we have 
	$g^{{\mathcal V}_k}_s(\widehat{z}^{(\ell)}; \nu^{(\ell)}) =
		g_s(\widehat{z}^{(\ell)}; \nu^{(\ell)}) \neq 0$. Consequently,
	$\alpha^{{\mathcal V}_k}_{\epsilon}(\nu)$ is differentiable 
	at $\nu = \nu^{(\ell)}$ by Theorem \ref{thm:poly_red_psa_der}.

	Finally, for the equality of the gradients, suppose that $u$, $v$ are consistent unit norm left, right
	singular vectors, respectively, corresponding to $\sigma := \sigma_{\min}( P(z^{(\ell)};\nu^{(\ell)}))$. 
	Due to the simplicity assumption on this singular value, we have $v \in {\mathcal V}_k$.
	As a consequence, there exists a unit norm vector $a$ such that 
	$v = V_k a$. Since $u$, $v$ are consistent 
	left, right singular vectors, respectively, we have
	$\, P(z^{(\ell)};\nu^{(\ell)}) v	\,	=	\,	\sigma u \,$
			and
	$\, u^\ast  P(z^{(\ell)};\nu^{(\ell)}) 	\,	=	\,	\sigma  v^\ast \,$,
	which can be rewritten in terms of $a$ as
	\[
		P^{V_k} (z^{(\ell)};\nu^{(\ell)}) a	\;	=	\;	\sigma u
			\quad	\text{and}	\quad
		u^\ast  P^{V_k}(z^{(\ell)};\nu^{(\ell)}) 	\;	=	\;	\sigma  a^\ast	\,	.
	\]
	Hence, $u$, $a$ are consistent unit left, right
	singular vectors, respectively, corresponding to $\sigma_{\min}( P^{V_k}(z^{(\ell)};\nu^{(\ell)}))$. 
	From the expressions (\ref{eq:psa_der}), (\ref{eq:lagrange_mult})
	for the derivatives of the pseudospectral abscissa function, and (\ref{eq:red_psa_der}), (\ref{eq:red_lagrange_mult})
	for the reduced counterparts, we obtain
	\begin{equation*}
		\begin{split}
		\frac{\partial \alpha_{\epsilon}}{\partial \nu_j} (\nu^{(\ell)}) \;
			&	 =	\;
			\frac{-\mre \left\{ u^\ast P_{\nu_j}( z^{(\ell)}; \nu^{(\ell)}) v \right\}}
				{\mre\{ u^\ast P'(z^{(\ell)} ; \nu^{(\ell)}) v \} - 
				\epsilon \{ \mre( z^{(\ell)} ) / | z^{(\ell)} | \}  p'_w( | z^{(\ell)} | )}		\\[.3em]
			&	=	\;
			\frac{-\mre \left\{ u^\ast P^{V_k}_{\nu_j}( z^{(\ell)}; \nu^{(\ell)}) a \right\}}
				{\mre\{ u^\ast P'(z^{(\ell)} ; \nu^{(\ell)}) V_k a \} - 
				\epsilon \{ \mre( z^{(\ell)} ) / | z^{(\ell)} | \}  p'_w( | z^{(\ell)} | )} \\[.7em]
					&	=	\;	
			\frac{\partial \alpha^{{\mathcal V}_k}_{\epsilon}}{\partial \nu_j} (\nu^{(\ell)})	
		\end{split}	
	\end{equation*}
	for $j = 1, \dots , \sd$, where $P_{\nu_j}( z; \nu)$, $P^{V_k}_{\nu_j}( z; \nu)$ denote the partial derivatives of
	$P$, $P^{V_k}$, respectively, with respect $\nu_j$ keeping $z$ and the entries of $\nu$ other than $\nu_j$ fixed.
\end{proof}

\subsection{Global convergence}\label{subsec:gl_conv}
The result below concerns the convergence of Algorithm~\ref{alg:sub_fw_min},
and is identical to \cite[Theorem 3.1]{AliM22}. This result follows from 
part 2. of Theorem \ref{thm:monotonicity} and part 1. of Theorem \ref{thm:hermite_interpolate}.
\begin{theorem}\label{thm:convergence}
Suppose that the iterates $\nu^{(\ell)}$ and $\nu^{(k)}$ by Algorithm \ref{alg:sub_fw_min}
are equal for some integers $\ell$, $k$ such that $1 \leq \ell < k$. Then $\nu^{(\ell)}$ is a
global minimizer of $\alpha_{\epsilon}(\nu)$ over all $\nu \in \underline{\Omega}$.
\end{theorem}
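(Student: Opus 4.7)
The plan is to chain together the monotonicity property (Theorem \ref{thm:monotonicity}, part 2) and the interpolation property (Theorem \ref{thm:hermite_interpolate}, part 1) with the fact that each iterate $\nu^{(k)}$ globally minimizes the reduced objective $\alpha^{{\mathcal V}_{k-1}}_{\epsilon}(\nu)$ over $\underline{\Omega}$ by construction (see line \ref{siter_start_sf_min} of Algorithm~\ref{alg:sub_fw_min}).

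First I would observe that the subspace sequence is monotone: by the expansion step in line \ref{line:expand_sf_min}, we have ${\mathcal V}_0 \subseteq {\mathcal V}_1 \subseteq \dots \subseteq {\mathcal V}_{k-1}$. In particular, since $\ell < k$, the vector $v^{(\ell)}$ (a right singular vector corresponding to $\sigma_{\min}(P(z^{(\ell)}; \nu^{(\ell)}))$) lies in ${\mathcal V}_{k-1}$. Hence Theorem \ref{thm:hermite_interpolate}, part 1, applied with ${\mathcal V}_k$ replaced by ${\mathcal V}_{k-1}$, yields the interpolation identity
\[
    \alpha_{\epsilon}(\nu^{(\ell)}) \;=\; \alpha^{{\mathcal V}_{k-1}}_{\epsilon}(\nu^{(\ell)}) .
\]

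Next, using the hypothesis $\nu^{(\ell)} = \nu^{(k)}$ together with the fact that $\nu^{(k)} \in \arg\min_{\nu \in \underline{\Omega}} \alpha^{{\mathcal V}_{k-1}}_{\epsilon}(\nu)$, for every $\nu \in \underline{\Omega}$ we have
\[
    \alpha^{{\mathcal V}_{k-1}}_{\epsilon}(\nu^{(\ell)}) \;=\; \alpha^{{\mathcal V}_{k-1}}_{\epsilon}(\nu^{(k)}) \;\leq\; \alpha^{{\mathcal V}_{k-1}}_{\epsilon}(\nu) .
\]
Finally, Theorem \ref{thm:monotonicity}, part 2, gives the pointwise bound $\alpha^{{\mathcal V}_{k-1}}_{\epsilon}(\nu) \leq \alpha_{\epsilon}(\nu)$ for every $\nu \in \underline{\Omega}$. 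Concatenating these three relations produces
\[
    \alpha_{\epsilon}(\nu^{(\ell)}) \;=\; \alpha^{{\mathcal V}_{k-1}}_{\epsilon}(\nu^{(\ell)}) \;\leq\; \alpha^{{\mathcal V}_{k-1}}_{\epsilon}(\nu) \;\leq\; \alpha_{\epsilon}(\nu)
\]
for all $\nu \in \underline{\Omega}$, which is precisely the statement that $\nu^{(\ell)}$ is a global minimizer of $\alpha_{\epsilon}$ over $\underline{\Omega}$.

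There is no real obstacle here: the entire argument is a short chain of inequalities whose nontrivial content is already packaged in Theorems \ref{thm:monotonicity} and \ref{thm:hermite_interpolate}. The only care needed is to note that the interpolation property uses ${\mathcal V}_{k-1}$ rather than ${\mathcal V}_k$, which is legitimate because the singular vector $v^{(\ell)}$ is incorporated into the basis strictly before iteration $k$, i.e.\ already in ${\mathcal V}_{\ell} \subseteq {\mathcal V}_{k-1}$.
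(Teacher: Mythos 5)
Your argument is correct and is exactly the one the paper intends: it chains part 2 of Theorem~\ref{thm:monotonicity}, part 1 of Theorem~\ref{thm:hermite_interpolate} (applied with the index $k-1\ge\ell$, which is legitimate since $k\ge 2$), and the global optimality of $\nu^{(k)}$ for the reduced problem over ${\mathcal V}_{k-1}$, which is precisely the argument of \cite[Theorem 3.1]{AliM22} that the paper invokes. No gaps; your explicit write-out just spells out what the paper defers to the reference.
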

\begin{proof}
   The proof is the same as that  of \cite[Theorem 3.1]{AliM22}.
\end{proof}
\noindent
As we deal with finite dimensional $\sn \times \sn$ matrix polynomials, 
the restriction subspace ${\mathcal V}_k$ in Algorithm~\ref{alg:sub_fw_min} must 
stagnate after finitely many iterations, and Theorem~\ref{thm:convergence}  implies that when 
stagnation in the subspaces occurs, then the iterates stagnate at a global minimizer.

After deriving the theoretical basis of the subspace framework, in the next section 
we apply the results to damping optimization problems.


\section{Application to damping optimization}\label{sec:apply_damping_opt}
The classical damping optimization problem (see, e.g. \cite{Tom23}) 
concerns a second-order system of the form
\begin{equation}\label{eq:damping}
	M \ddot{x}(t) + C(\nu) \dot{x}(t) + K x(t) \; = \; f(t) \, ,
\end{equation}
where $M, K \in {\mathbb R}^{n\times n}$ are the symmetric positive definite 
mass, stiffness matrices, and $C(\nu) \in {\mathbb R}^{n\times n}$
is the symmetric positive semi-definite damping matrix dependent on
parameters, e.g. the viscosities of passive dampers. Typically, $C(\nu)$ can be partitioned as
\[
	 C(\nu) \; = \; C_{\text{int}} + C_{\text{ext}}(\nu)
\]
with $C_{\text{int}} \; = \; 2 \xi M^{1/2} \sqrt{M^{-1/2} K M^{-1/2}} M^{1/2}$
representing the internal damping for some constant $\xi > 0$, and 
$C_{\text{ext}}(\nu)$ representing the external damping that is determined
by the viscosities $\nu$, as well as the positions of the dampers.

There are several objectives considered in the literature to choose the
viscosities \cite{Nak02,Ves11}. One of the objectives that has been taken
into consideration is the minimization of the spectral abscissa \cite{FreL99,NakTT13}. 
In order to obtain robust stability, rather than minimizing the spectral abscissa,  
here we aim to choose the viscosities $\nu \in \underline{\Omega}$
minimizing the $\epsilon$-pseudospectral abscissa $\alpha_{\epsilon}(P(\cdot ; \nu))$
for a given $\epsilon > 0$, where $P(\lambda ; \nu) = \lambda^2 M + \lambda C(\nu) + K$, 
and $\underline{\Omega}$ is a prescribed box in the parameter space. 
In the last example we consider, in addition to a damper, that the matrix $K$ also depends
on a parameter, and we still minimize $\alpha_{\epsilon}(P(\cdot ; \nu))$ on a box $\underline{\Omega}$. 
In all of the examples in this section, by construction, 
the coefficients $M$, $C(\nu)$, $K$ are symmetric, and 
positive definite, positive semi-definite, positive definite, respectively.

All of the numerical experiments in this section were performed in MATLAB 2024b 
on a MacBook Air 
with Mac OS~15.4 operating system, Apple M3 CPU and 16GB RAM using our
implementations of Algorithm \ref{alg:sub_fw_min} and \ref{alg:sub_fw}, as well
as other ingredients such as the criss-cross algorithm (i.e. Algorithm \ref{alg:criss-cross}).
The reduced minimization problem in line \ref{siter_start_sf_min} of Algorithm \ref{alg:sub_fw_min}
is solved globally throughout by means of the package {\EIGOPT}. When using Algorithm~\ref{alg:sub_fw_min}, 
the number of initial interpolation points in line \ref{line:absk_sf_min0} is $\eta = 5$ and $\eta = 8$
when the problem is dependent on one and two parameters, respectively. The initial
interpolation points are chosen as equally spaced points on the interval $\underline{\Omega}$
if there is one parameter or on the line segment that connects 
the lower-left and upper-right corners of the box 
$\underline{\Omega}$ if there are two parameters. The prescribed termination tolerance
for Algorithm~\ref{alg:sub_fw_min} for the examples below is $10^{-8}$.

The MATLAB software with which we have carried out the numerical experiments
in this section is publicly available \cite{MehM24}. All of the results reported below 
can be reproduced using this software.

\begin{example}\label{sm:ex2}{\rm
This is a small-scale example arising from a mass-spring-damper system
consisting of four masses, where the damper is positioned on the second mass,
and we want to determine the optimal viscosity for this damper minimizing the 
$\epsilon$-pseudospectral abscissa objective. The system has the coefficients
\[
	M
		=
	\mathrm{diag}(     
	1,2,3,4)	\:	,	\;\;
	C(\nu)
		=
	C_{\mathrm{int}}	\,	+	\,	\nu e_2 e_2^T	\:	,	\;\;
	K
		=
	\mathrm{tridiag}(-5,10,-5) \, ,
\]
where $C_{\mathrm{int}} \, = \, 2 \xi M^{1/2} \sqrt{M^{-1/2} K M^{-1/2}} M^{1/2}$
with $\xi = 0.005$. 

We minimize the $\epsilon$-pseudospectral abscissa $\alpha_{\epsilon}(P(\cdot ; \nu))$ 
for $(w_m, w_c, w_k) = (1, 1, 1)$ and $\epsilon = 0.05$ over $\nu \in [0, 100]$
using {\EIGOPT} and Algorithm~\ref{alg:criss-cross} to compute the objective 
$\alpha_{\epsilon}(\cdot ; \nu)$ and its derivatives at various $\nu$. 

Computing the globally optimal value with an error less than $10^{-8}$
requires 414 $\epsilon$-pseudospectral abscissa evaluations, and takes   
about 0.7 seconds. The computed global minimizer $\nu_\ast = 4.6679$
(i.e. the optimal viscosity) is illustrated in the plots of $\alpha_{\epsilon}(P(\cdot ; \nu))$
as a function of the viscosity $\nu$ on the left in Figure \ref{fig:small_damping}.

The original undamped system is asymptotically stable 
with the spectral abscissa $\alpha(P(\, \cdot \, ;0)) = -0.0043$, 
but not robustly stable as $\alpha_{\epsilon}(P(\, \cdot \, ;0)) = 0.0619$ is positive indicating the existence
of nearby systems that are not asymptotically stable. On the other hand, the globally minimal 
value of the $\epsilon$-pseudospectral abscissa 
$\alpha_{\epsilon}(P(\, \cdot \, ;\nu_\ast)) = -0.0888$
is negative, so not only this damped system but also all nearby systems are asymptotically stable. 
Moreover, the spectral abscissa for the optimized system 
$\alpha(P(\, \cdot \, ;\nu_\ast)) = -0.1347$ 
is smaller than that for the undamped system by a magnitude of more than 
thirty times, indicating also
a better asymptotic behavior for the optimally damped system. The $\epsilon$-pseudospectrum
of the original undamped system and the optimally damped system are shown on the right
in Figure \ref{fig:small_damping}. While some of the components of the $\epsilon$-pseudospectrum
for the undamped system extend to the right-half of the complex plane, all components 
of the $\epsilon$-pseudospectrum of the optimally damped system lie in the left-half of
the complex plane. Indeed, optimization moves all of the components of the $\epsilon$-pseudospectrum
considerably to the left.

 \begin{figure}
 \vskip 4.5ex 
 	\begin{tabular}{cc}
		
			\hskip -2.5ex
			\includegraphics[width = .56\textwidth]{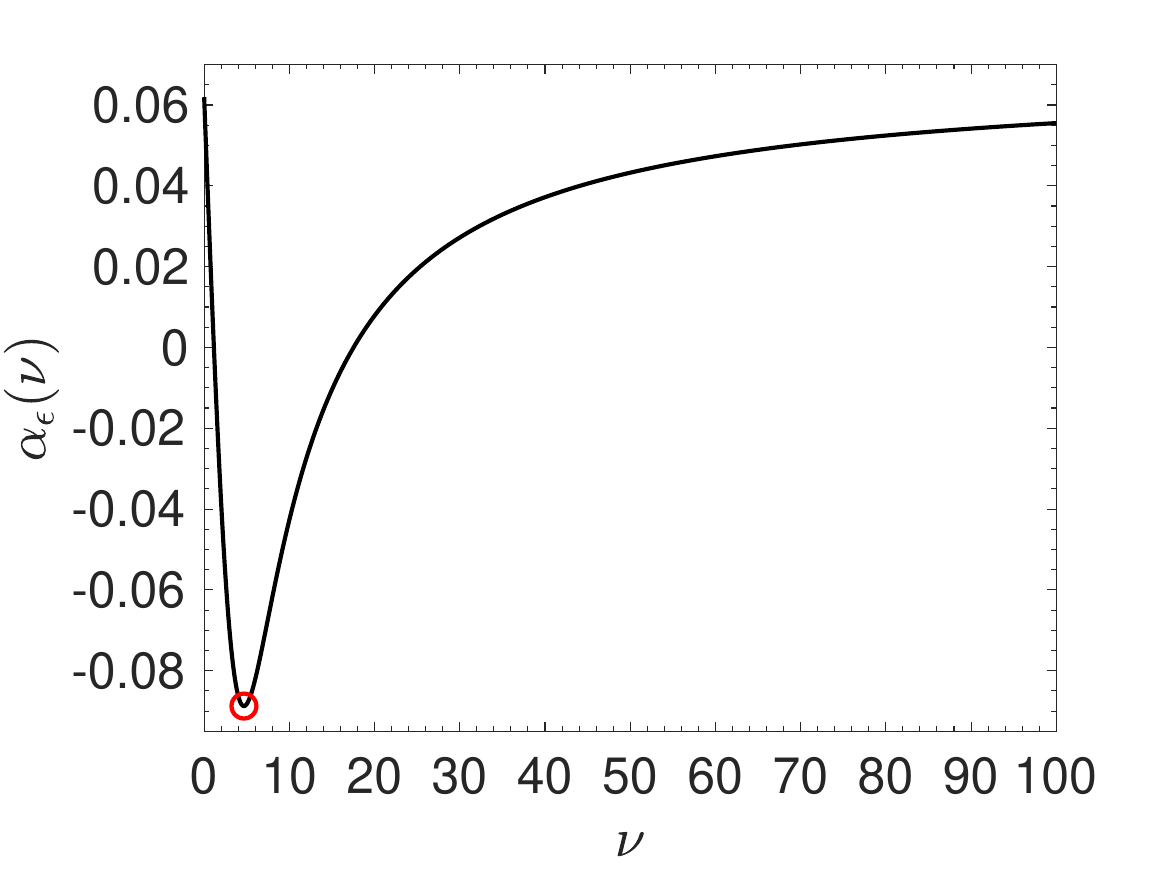}
				&		\\[-17em]
			\hskip -2.5ex
			\includegraphics[width = .56\textwidth]{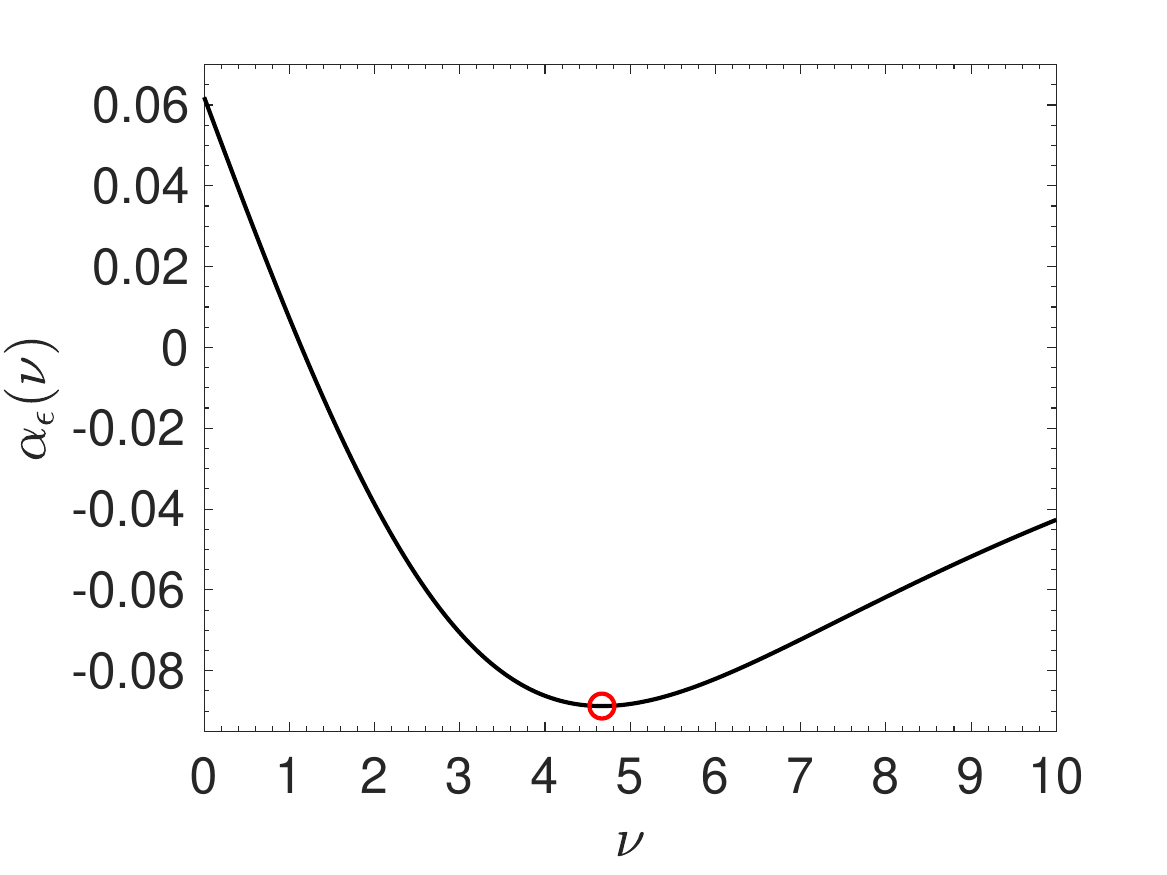} 
				&
			\hskip -3ex
			\includegraphics[width = .45\textwidth]{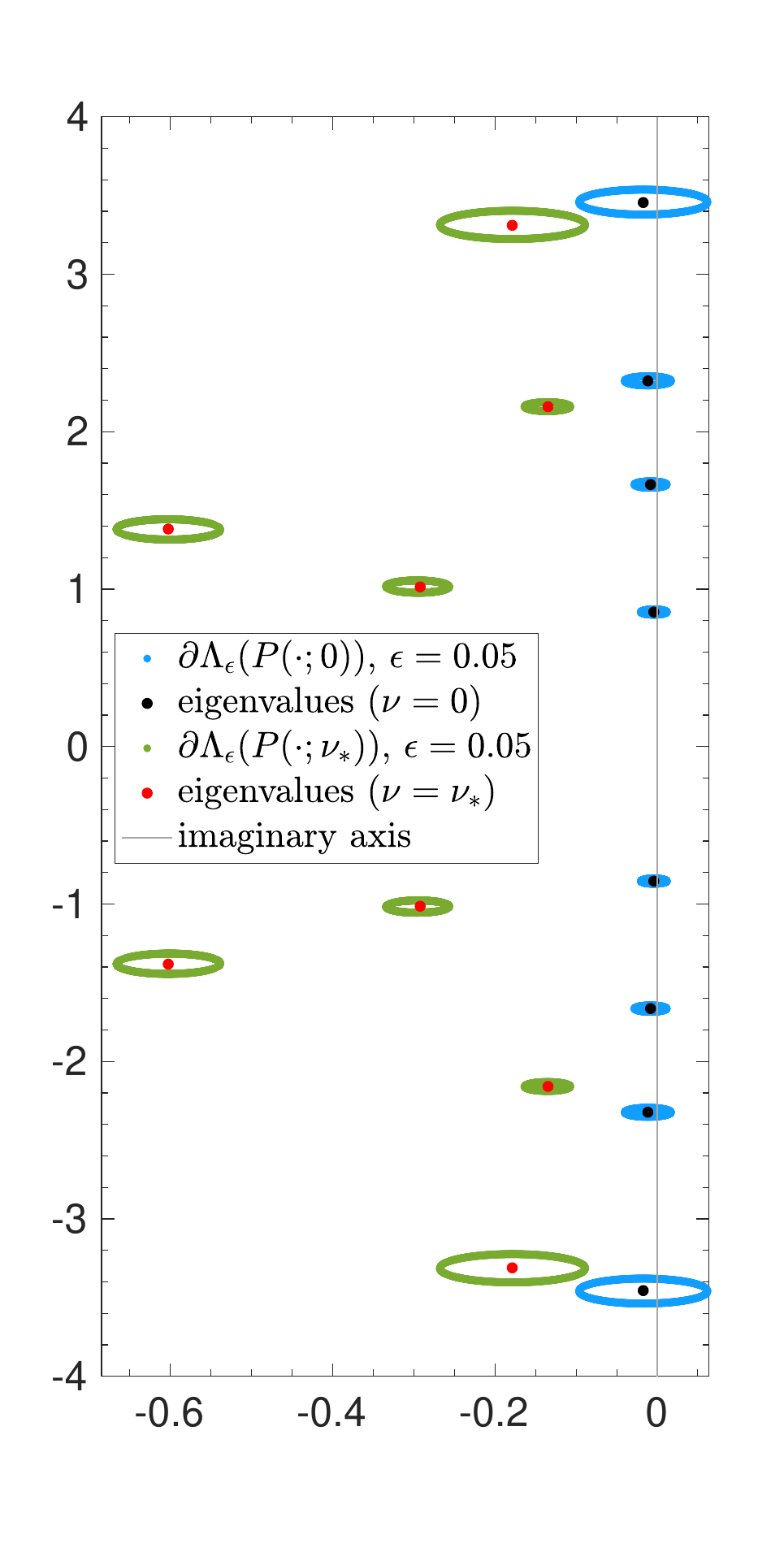}
	\end{tabular}
		\caption{ Small-scale damping optimization problem
		with $\epsilon = 0.05$ in Example \ref{sm:ex2}.
		 The left plots illustrate $\alpha_{\epsilon}(P(\cdot ; \nu))$
		as a function of the viscosity $\nu$, as well as its computed global minimizer 
		marked with red circles.
		 In the bottom plot,
		 the $\epsilon$-pseudospectral abscissa objective is zoomed near
		 the computed global minimizer. In the right plot, the boundary of the $\epsilon$-pseudospectrum 
		for the originally undamped system $\partial \Lambda_{\epsilon}(P(\cdot ; 0))$ (in blue)
		and that for the optimally damped system $\partial \Lambda_{\epsilon}(P(\cdot ; \nu_\ast))$ (in green)
		are shown along with their 
		eigenvalues. 
		}
		\label{fig:small_damping}
\end{figure}
}
\end{example}

\begin{example}\label{lar:ex1}{\rm
A larger example similar to Example \ref{sm:ex2} involves $20\times 20$
mass, damping, stiffness matrices given by
\[
	M
		=
	\mathrm{diag}(     
	1,\dots, 20)	\:	,	\;\;
	C(\nu)
		=
	C_{\mathrm{int}}	\,	+	\,	\nu e_2 e_2^T	\:	,	\;\;
	K
		=
	\mathrm{tridiag}(-25,50,-25) \, ,
\]
where as before $C_{\mathrm{int}} \, = \, 2\xi  M^{1/2} \sqrt{M^{-1/2} K M^{-1/2}} M^{1/2}$
with $\xi = 0.005$. As in the previous example, we minimize 
$\alpha_{\epsilon}(P(\cdot ; \nu))$ for $\epsilon = 0.05$ and $(w_m, w_c, w_k) = (1, 1, 1)$ 
over $\nu \in [0, 100]$, but now by applying the subspace framework Algorithm~\ref{alg:sub_fw_min}
with a prescribed error less than $10^{-8}$. After 
four iterations the method returns $\nu_\ast = 42.1076$ 
as the minimizer, as well as $\alpha_{\epsilon}(P(\cdot ; \nu_\ast)) = 0.0020$.
The accuracy of these computed optimal values
is illustrated in the plots on the left-hand side of Figure \ref{fig:larger_damping}.
It is apparent from the plots that, unlike in Example \ref{sm:ex2}, the $\epsilon$-pseudospectral 
abscissa objective is now nonsmooth at the global minimizer, but the method still converges 
to the global minimizer. 
The original undamped system is asymptotically stable 
with $\alpha(P(\cdot ; 0)) = -0.0011$, yet
small perturbations move its eigenvalues to the right-half plane, as 
$\alpha_{\epsilon}(P(\cdot ; 0)) = 0.1324$. The optimally damped system
has better asymptotic behavior with $\alpha(P(\cdot ; \nu_\ast)) = -0.0079$, and
has considerably smaller $\epsilon$-pseudospectral abscissa 
$\alpha_{\epsilon}(P(\cdot ; \nu_\ast)) = 0.0020$,
but there are still unstable systems in the $\epsilon$-neighborhood of the optimally damped system.
The right-hand plot in Figure \ref{fig:larger_damping} depicts the $\epsilon$-pseudospectrum
for the undamped and optimally damped system, where again it is evident that optimization
moves all of the components of the $\epsilon$-pseudospectrum to the left, but
not completely to the left-half of $\mathbb{C}$.

 \begin{figure}
\vskip 3ex
 	\begin{tabular}{cc}
			\hskip -2.5ex
			\includegraphics[width = .53\textwidth]{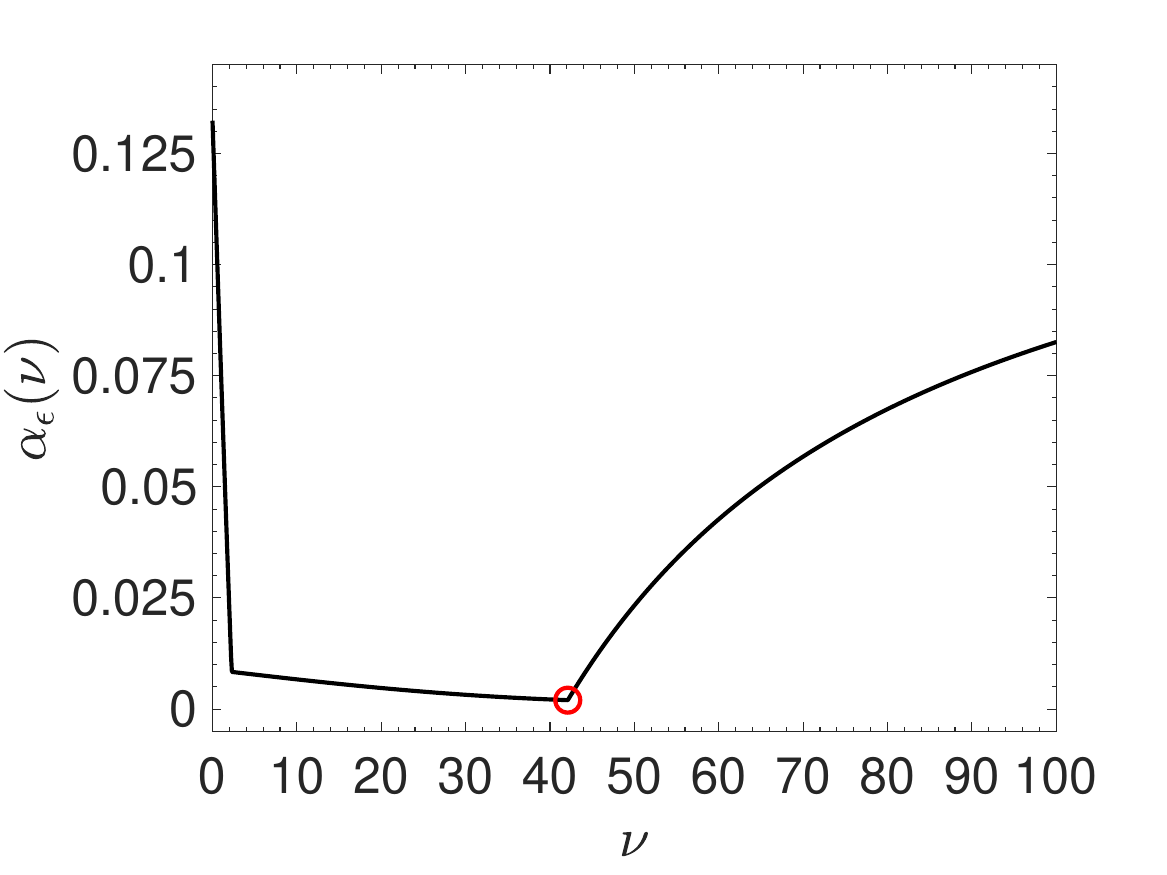}
				&		\\  [-15.5em]
			\hskip -2.5ex
			\includegraphics[width = .53\textwidth]{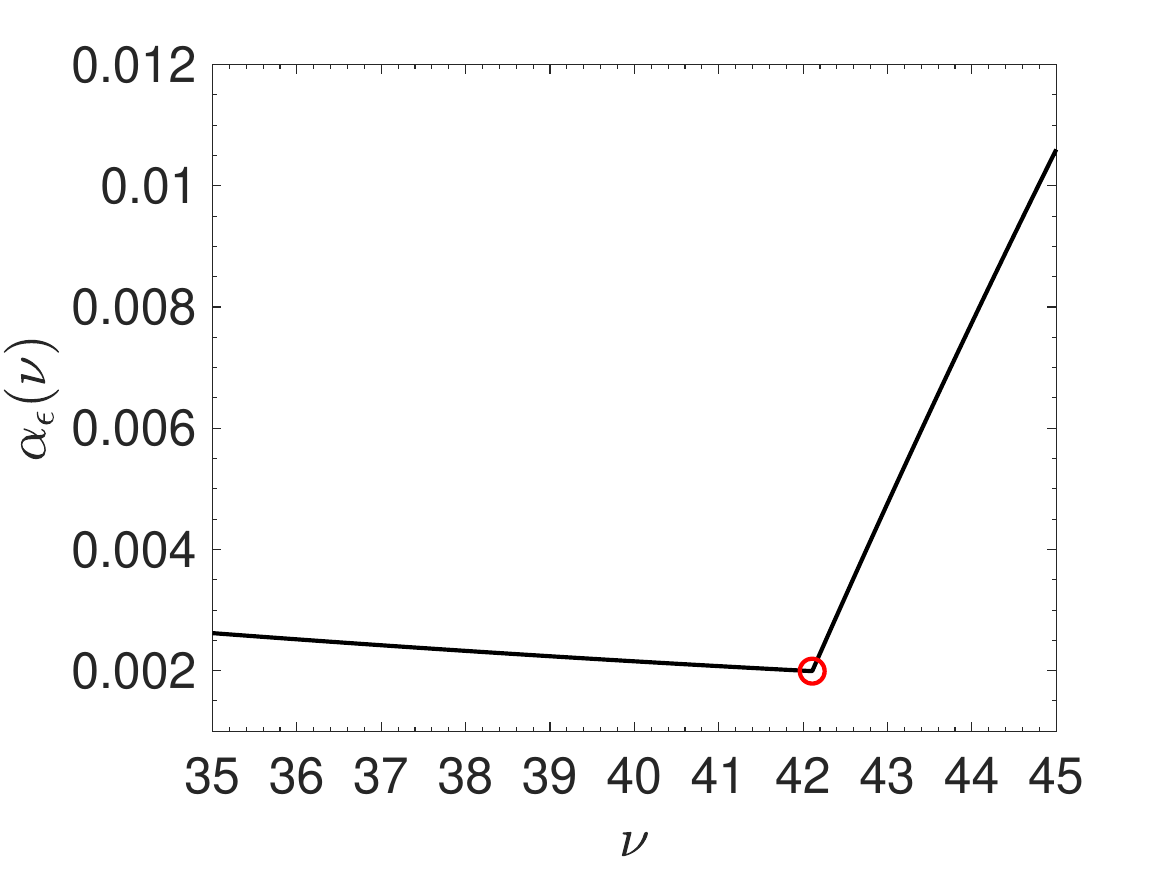} 
				&
			\hskip -3ex
			\includegraphics[width = .46\textwidth]{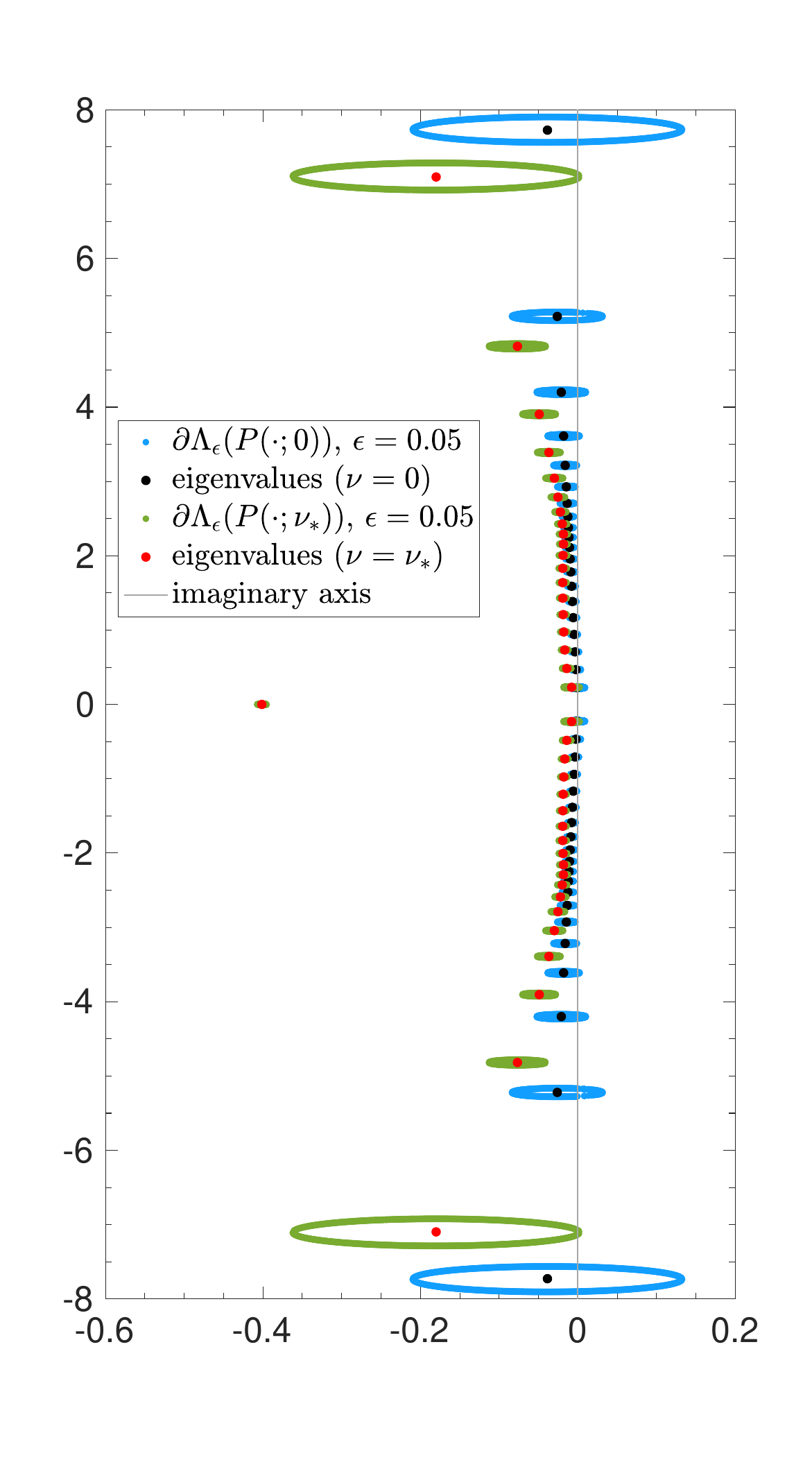}
	\end{tabular}
		\caption{ Results for
	$20\times 20$ damping problem in Example \ref{lar:ex1}.
		}
		\label{fig:larger_damping}
\end{figure}

The iterates $\nu^{(k)}$, $\alpha_{\epsilon}(\nu^{(k)})$ for $\, k\geq 1$ 
of Algorithm \ref{alg:sub_fw_min} listed in Table \ref{tab:sf_iterates} indicate a fast 
convergence. The total runtime of Algorithm \ref{alg:sub_fw_min} until termination is 
about 1.8 seconds. This is
much faster than a direct minimization (i.e. without employing subspace frameworks)
 of $\alpha_{\epsilon}(P(\cdot ; \nu))$ via {\EIGOPT} coupled with Algorithm~\ref{alg:criss-cross} to compute 
 the objective, which takes about 87.5 seconds. 
Even a direct minimization of $\alpha_{\epsilon}(P(\cdot ; \nu))$ 
via {\EIGOPT} coupled with the
subspace framework (i.e. Algorithm \ref{alg:sub_fw}) to compute the $\epsilon$-pseudospectral abscissa
takes about 25.4 seconds and is still substantially slow.
 The distribution of the runtime among the different parts  of Algorithm \ref{alg:sub_fw_min} is presented in the second, third, fourth columns of Table \ref{lar:ex1}.
 Most of the runtime of Algorithm~\ref{alg:sub_fw_min} is taken by the solutions of the
 reduced problems in line \ref{siter_start_sf_min}, and the computation of the rightmost point
 in $\Lambda_{\epsilon}(P(\cdot ; \nu))$  via Algorithm \ref{alg:sub_fw} 
 in lines \ref{line:absk_sf_min0} and \ref{line:absk_sf_min}. Among these two ingredients, 
solving the reduced problems contributes more to the total runtime of 
 Algorithm \ref{alg:sub_fw_min}.


\begin{table}
\begin{center}
\begin{tabular}{|c||c|c|c|}
        \hline
	       $k$ 	& 	$\nu^{(k)}$  	&  	$| \nu^{(k)} - \nu_\ast |$	&	$\alpha_{\epsilon}(\nu^{(k)})$	\\
	\hline
	\hline 
	1 &  \phantom{2}9.15901    &  $\, 3.29 \cdot 10^{1}$	&	-0.20179812	  \\ 
        2 & \underline{42.106}85   & $\;\;\; 7.66 \cdot 10^{-3}$	&	\phantom{-}\underline{0.0019}8921 \\ 
 	3 & \underline{42.10761}  & $ \: < 10^{-5}$	&	\phantom{-}\underline{0.00199163} \\ 
        \hline 
\end{tabular}
\end{center}
	\caption{ Iterates of Algorithm \ref{alg:sub_fw_min}
	to solve the damping optimization problem
	in Example \ref{lar:ex1}. The underlined digits indicate correct decimal digits.}
	\label{tab:sf_iterates}
\end{table}


%


\begin{table}
	\begin{center}
	\begin{tabular}{|c|c|c|c|c|c|c|}
        \hline
        $\:$ iter $\:$ 	&
	        $\;\,$total$\;\,$ 	 	& 		reduced  &  	\phantom{a} psa \phantom{a}  	&  	inner & direct (Alg.~\ref{alg:criss-cross})	& direct (Alg.~\ref{alg:sub_fw})		\\
	\hline
	\hline
	4	&	1.8 		& 		1.6  		&  	0.2  		& 	4, 6		&		87.5		&	25.4	   	\\ 	
        \hline 
	\end{tabular}
	\end{center}
	\caption{Number of iterations and runtimes in seconds to solve the damping problem
					in Example \ref{lar:ex1}. The total time (the column `total'),
					time taken by the reduced problems inside Algorithm \ref{alg:sub_fw_min} 
							(the column `reduced'),
							and time to compute $\alpha_{\epsilon}(P(\cdot ; \nu))$,
							corresponding rightmost point inside Algorithm \ref{alg:sub_fw_min} 
							(the column `psa')
							are also listed. 
		The column of `iter' corresponds to the number of (outer) iterations of Algorithm 	
		\ref{alg:sub_fw_min}, 
		while, in the column of `inner', the average number (rounded to nearest integer)
		and maximal number of 
		(inner) iterations performed by Algorithm~\ref{alg:sub_fw} during several calls
		throughout the minimization process via Algorithm \ref{alg:sub_fw_min} are given.
		The last two columns are times for direct minimization coupled
						with Algorithm~\ref{alg:criss-cross} or Algorithm~\ref{alg:sub_fw}
							to compute $\alpha_{\epsilon}(P(\cdot ; \nu))$.}
	\label{tab:sf_runtimes}
\end{table}

The inner subspace framework Algorithm \ref{alg:sub_fw} is called several times 
during this minimization via Algorithm \ref{alg:sub_fw_min}. 
The average number of iterations (rounded to nearest integer) and 
maximal number of iterations of Algorithm \ref{alg:sub_fw} 
during these calls are four and six, respectively.
We illustrate a typical convergence behavior of the inner 
subspace framework in Table \ref{tab:inner_iterates}, which lists the iterates of 
Algorithm \ref{alg:sub_fw} to compute $\alpha_{\epsilon}(P(\cdot; \nu_\ast))$.
A superlinear convergence of the iterates is apparent from the table.
Here, the inner subspace framework terminates after four iterations with a restriction
subspace of dimension four. This is a typical convergence behavior which seems
to be independent of the size of the matrix polynomial; the inner iteration terminates
with a subspace of dimension less than ten in all of our experiments. See in particular
the number of inner iterations reported below for the subsequent two examples.

\begin{table}
\begin{center}
\begin{tabular}{|c||c|}
        \hline
	       $k$ 	& 	$z^{(k)}$  	\\
	\hline
	\hline 
	1 &  \underline{0.00}041904 + \underline{0.230}22018$\mathbf{i}$	  \\ 
        2 &  \underline{0.001991}51 + \underline{0.23009}220$\mathbf{i}$ \\ 
 	3 &  \underline{0.00199163} + \underline{0.23009178}$\mathbf{i}$ \\ 
        \hline 
\end{tabular}
\end{center}
	\caption{The iterates of Algorithm \ref{alg:sub_fw} on the damping problem
	in Example \ref{lar:ex1} to compute $\alpha_{\epsilon}(P(\cdot ; \nu_\ast))$ at the
	global minimizer $\nu_\ast$.}
	\label{tab:inner_iterates}
\end{table}}


\end{example}

\begin{example}\label{lar:ex2}{\rm
To illustrate the behavior of the framework with increasing dimension $n$, this example 
concerns damping optimization problems with $n\times n$ mass, damping, 
stiffness matrices of the form
\begin{equation*}
\begin{split}
	&
	M
		=
	\mathrm{diag}(     
	1,2, \dots, n)	\:	,	\;\;
	C(\nu)
		=
	C_{\mathrm{int}}	\,	+	\,	\nu R^T R	\:	,	\;\;
	K
		=
	\mathrm{tridiag}(-400,800,-400) \, ,		\\[.7em]
	&
	\text{with}	\;\;
	R
		=
	\left[
		\begin{array}{rrrrrrrr}
		\; 0	&	\;\; 1	&	-1	&	\;\; 0	&	\;\; 0	&	\;\; 0	&	\dots		&	 0 	\\[.6em]	
		0	&	0	&	0	&	1	&	-1	&	0	&	\dots		&	 0 
		\end{array}
	\right]	\in {\mathbb R}^{2\times n}
\end{split}
\end{equation*}
and $C_{\mathrm{int}} \, = \, 2 \xi M^{1/2} \sqrt{M^{-1/2} K M^{-1/2}} M^{1/2}, \: \xi = 0.005 \:$
for $\: n = 80, 200, 400, 1200, 1400$.	
For every $n$, using Algorithm~\ref{alg:sub_fw_min}, we minimize $\alpha_{\epsilon}(\nu)$ over 
$\nu \in [0, 250]$ for 
$\epsilon = 0.03$ and two different choices for the weights, namely 
$(w_m, w_c , w_k) = (1, 1, 1)$ and $(w_m, w_c , w_k) = (0.7, 1, 0)$.
Note that for the latter choice, perturbations of the stiffness matrix are not allowed. 
Physically, the parameter-dependent matrix polynomial again corresponds to a mass-spring-damper system
consisting of $n$ masses connected with springs such that the second and third mass, as well as
the fourth and fifth mass are tied through dampers. Moreover, both of the two dampers are required
to have the same viscosity.

 The computed results of Algorithm~\ref{alg:sub_fw_min}  are reported in Table \ref{tab:sf_large}.
 In each case, the $\epsilon$-pseudospectral abscissa of the optimally damped system is considerably smaller than 
 that of the original undamped system. In some cases, even the $\epsilon$-pseudospectral abscissa becomes negative
 after minimization, indicating not only the optimal system but also nearby systems are made asymptotically stable
 through optimization. The number of subspace iterations of and runtimes of Algorithm~\ref{alg:sub_fw_min}  
 are also listed in Table \ref{tab:sf_large}. An important 
 observation is that the number of subspace iterations of Algorithm~\ref{alg:sub_fw_min}
 until termination remains small, indeed never exceeds three, in all cases. 
Similarly, according to the figures in the last column of Table \ref{tab:sf_large},
 the average and maximal number of inner iterations of Algorithm \ref{alg:sub_fw}
 throughout an execution of Algorithm~\ref{alg:sub_fw_min} never exceeds 
 five and eight, respectively. 
 The dimension of the restriction subspace for the outer 
 subspace framework (Algorithm~\ref{alg:sub_fw_min})
 is bounded by $\eta + \text{iter} - 1$, which, recalling that $\eta = 5$ 
 initial interpolation points are used, is less than eight in all cases. Also,
 the dimension of the restriction subspace for the inner
 subspace framework (Algorithm~\ref{alg:sub_fw}) never exceeds eight
 according to the maximal number of inner iterations
 listed in the last column of Table \ref{tab:sf_large}.
 
 As we have observed before, also looking at the times reported in the columns 
 of `total', `reduced', `psa' of Table \ref{tab:sf_large}, the total runtime 
 is more or less determined by (i) the time to solve the reduced problems 
 and (ii) the time to locate the rightmost points  
 in the $\epsilon$-pseudospectrum of the full problems. In these test examples for smaller
 matrix polynomials,
 the former contributes significantly more to the computation time. However, 
 the contribution of the latter becomes substantial as the sizes of the matrices increase.
 For larger matrix polynomials with $n > 1000$, according to Table \ref{tab:sf_large}, 
 computing the rightmost points in the $\epsilon$-pseudospectrum of the full 
 problems via Algorithm~\ref{alg:sub_fw} dominates the runtime.
 In such a call to Algorithm~\ref{alg:sub_fw} for the computation of 
 $\alpha_{\epsilon}(P(\cdot ; \nu))$ $\exists \nu \in \underline{\Omega} \,$
 and the parameter-dependent matrix polynomial 
 $P$ with $n = 1200$ or $n = 1400$, at least about $\% 95$ of the 
 total time is taken by the large-scale eigenvalue and singular value computations in lines  
 \ref{alg:init_points2_sf}, \ref{line:init_sub}, \ref{line:expand} of Algorithm~\ref{alg:sub_fw}.

Both the outer and the inner subspace frameworks 
are key ingredients for an efficient minimization of $\alpha_\epsilon(P(\cdot ; \nu))$. 
To illustrate this point, for the example with $n = 80$, 
we report the total runtimes in Table \ref{tab:only_one_sf}
when both of the frameworks are employed, as opposed to when only one of the two subspace 
frameworks is adopted, while the other one is abandoned. The column~`Alg.~\ref{alg:sub_fw_min} (with Alg.~\ref{alg:sub_fw})' in the table is the approach proposed
here when both of the subspace frameworks are employed, while the two columns to the
right are when the inner framework Algorithm \ref{alg:sub_fw} is omitted, and when 
the outer framework Algorithm \ref{alg:sub_fw_min} is omitted. 
Not using either one of the frameworks increases the runtime considerably,
but the effect of excluding the outer framework (i.e. Algorithm \ref{alg:sub_fw_min}) 
on the runtime is more pronounced.


\begin{table}
	\begin{center}
	\begin{tabular}{|r||c|c|c|c|c|c|c|c|}
        \hline
        		$n$, $\: w$	&
        		$\nu_\ast$		&	$\alpha_{\epsilon}(\nu_\ast)$  	&	$\alpha_{\epsilon}(0)$  	&
				iter		&	total		&	reduced  &  	psa		&	inner		\\
	\hline 
	\hline	
	$80$, $\mathbf{1}$		&	122.48	&	\phantom{-}0.00223	&	0.25226	&	3	&	12.3		&	11.7		&	0.5	&	3, 6		\\	
	\hline
$80$, $\mathbf{u}$		&	226.67	&  -0.00037		&	0.13030	&	3	&	18.4		&	17.9	 &	0.5	&	3, 3			\\ 
	\hline
		$200$, $\mathbf{1}$	&	123.50	&	\phantom{-}0.00438		&	0.25226		&	3	&	73.2	&	67.9	& 	5.0	&	4, 7		\\	
	\hline
		$200$, $\mathbf{u}$	&	227.20	&	-0.00002		&	0.13030		&	3		&	111.4	&	107.1		&		4.2	&	3, 3		\\   
	\hline
		$400$, $\mathbf{1}$		&	124.56	&	\phantom{-}0.00661	 &  	0.25226	&	3	&	260.1		&	227.1	&	33	&	5, 8			\\	
	\hline
		$400$, $\mathbf{u}$		&	227.25	&	\phantom{-}0.00001		&	0.13030	&	2	&	142.7	&	120.1	&	22.4		&	3, 3		\\ 
	\hline
		$1200$, $\mathbf{1}$		&	125.48	&	\phantom{-}0.00850	 &  	0.25226	&	2	&	820.3		&	98.3	&	692.8	&	5, 7		\\	
	\hline
		$1200$, $\mathbf{u}$		&	227.25	&	\phantom{-}0.00001	 &  	0.13030	&	2	&	774.6		&	134.9	&	618.5	&	3, 3	\\	
	\hline
		$1400$, $\mathbf{1}$		&	125.48	&	\phantom{-}0.00850	 &  	0.25226	&	2	&	1209.1		&	98.2	&	1091.1	&	5, 6		\\	
	\hline
		$1400$, $\mathbf{u}$		&	227.25	&	\phantom{-}0.00001	 &  	0.13030	&	2	&	1136.8		&	137.4	&	989.6	&	3, 3	\\	
	\hline
	\end{tabular}
	\end{center}
	\caption{ Application of Algorithm~\ref{alg:sub_fw_min} to solve the 
	damping problems of size $n = 80, 200, 400, 1200, 1400$ in Example \ref{lar:ex2},
	and with the weights $w = (w_m, w_c, w_k)$ equal to $\mathbf{1} := (1, 1, 1)$ or
	$\mathbf{u} := (0.7, 1, 0)$. The second, and
	third columns list the computed global minimizer, and
	the corresponding minimal value of $\alpha_{\epsilon}(\nu)$.
	The last five columns are as in Table \ref{tab:sf_runtimes}.}
	\label{tab:sf_large}
\end{table}

\begin{table}
	\begin{center}
	\begin{tabular}{|c||c|c|c|}
		\hline
		$w$	&
		Alg.~\ref{alg:sub_fw_min} (with Alg.~\ref{alg:sub_fw})	&	
		Alg.~\ref{alg:sub_fw_min} (with Alg.~\ref{alg:criss-cross})	&
		direct (with Alg.~\ref{alg:sub_fw})	\\
		\hline
		\hline
		$\mathbf{1}$	&	12.3		&		41.4		&	258.4		\\
		\hline
		$\mathbf{u}$	&	18.4		&		45.9		&	251.5		\\
		\hline
	\end{tabular}
	\end{center}
	\caption{Runtimes in seconds to minimize $\alpha_{\epsilon}(P(\cdot ; \nu))$ for
	the damping problem with $n = 80$ in Example \ref{lar:ex2}
	and the weights $w = (w_m, w_c, w_k)$ equal to either ${\mathbf 1} = (1, 1, 1)$
	or ${\mathbf u} = (0.7, 1, 0)$.}
	\label{tab:only_one_sf}
\end{table}

}
\end{example}

\begin{example}\label{ex:mul_par}{\rm
We finally discuss two damping optimization problems that depend on two parameters.
Both of the problems are variations of Example \ref{lar:ex1}. In the first problem,
the mass, damping, stiffness matrices are $20\times 20$ and given by
\[
	M
		=
	\mathrm{diag}(     
	1,\dots, 20)		,	\;
	C(\nu_1 , \nu_2)
		=
	C_{\mathrm{int}}		+		\nu_1 e_2 e_2^T		+		\nu_2 e_{19} e_{19}^T
			,	\;
	K
		=
	\mathrm{tridiag}(-25,50,-25),
\] 
where $C_{\mathrm{int}}$ is as in Example \ref{lar:ex1}. Thus, there are now two
dampers on the second and nineteenth mass instead of only one damper
on the second mass in Example \ref{lar:ex1}. The optimization parameters $\nu_1$ and $\nu_2$,
the viscosities of the dampers, are constrained to lie in the intervals $[0, 50]$ and $[0,100]$,
respectively, i.e. $\nu = (\nu_1 , \nu_2) \: \in \: \underline{\Omega} = [0,50]\times [0,100]$. 
An application of Algorithm~\ref{alg:sub_fw_min} to minimize $\alpha_{\epsilon}(\nu)$
for $\epsilon = 0.05$ and $(w_m , w_c , w_k) = (1, 1, 1)$ 
over $\nu \in \underline{\Omega} \,$ yields $\nu_\ast = (27.5958 , 62.1559)$ 
as the optimal values of the parameters. 
The accuracy of this computed minimizer is confirmed by the left-hand plot
in Figure \ref{fig:2parameter}. The quantities related to this application
of Algorithm~\ref{alg:sub_fw_min} are summarized in Table \ref{tab:2param_1}.
The computed globally minimal value of 
the $\epsilon$-pseudospectral abscissa is $\alpha_{\epsilon}(\nu_\ast) = -0.0186$.
Recall from the discussions following Example \ref{lar:ex1} that the $\epsilon$-pseudospectral
abscissa of the original undamped system is $0.1324$, while with only one damper on the
second mass the smallest $\epsilon$-pseudospectral abscissa possible is $0.0020$.
Using a second damper moves all of the components of the $\epsilon$-pseudospectral
abscissa completely to the open left-half of the complex plane, i.e. all
of the systems at a distance of $\epsilon = 0.05$ or closer are asymptotically stable for the
optimal choices for the two dampers. This robust asymptotic stability is not attainable with only 
one damper on the second mass as illustrated in Example \ref{lar:ex1}.

\begin{figure}
 \centering
	\begin{tabular}{cc}
		\hskip -2.2ex
			\includegraphics[width = .5\textwidth]{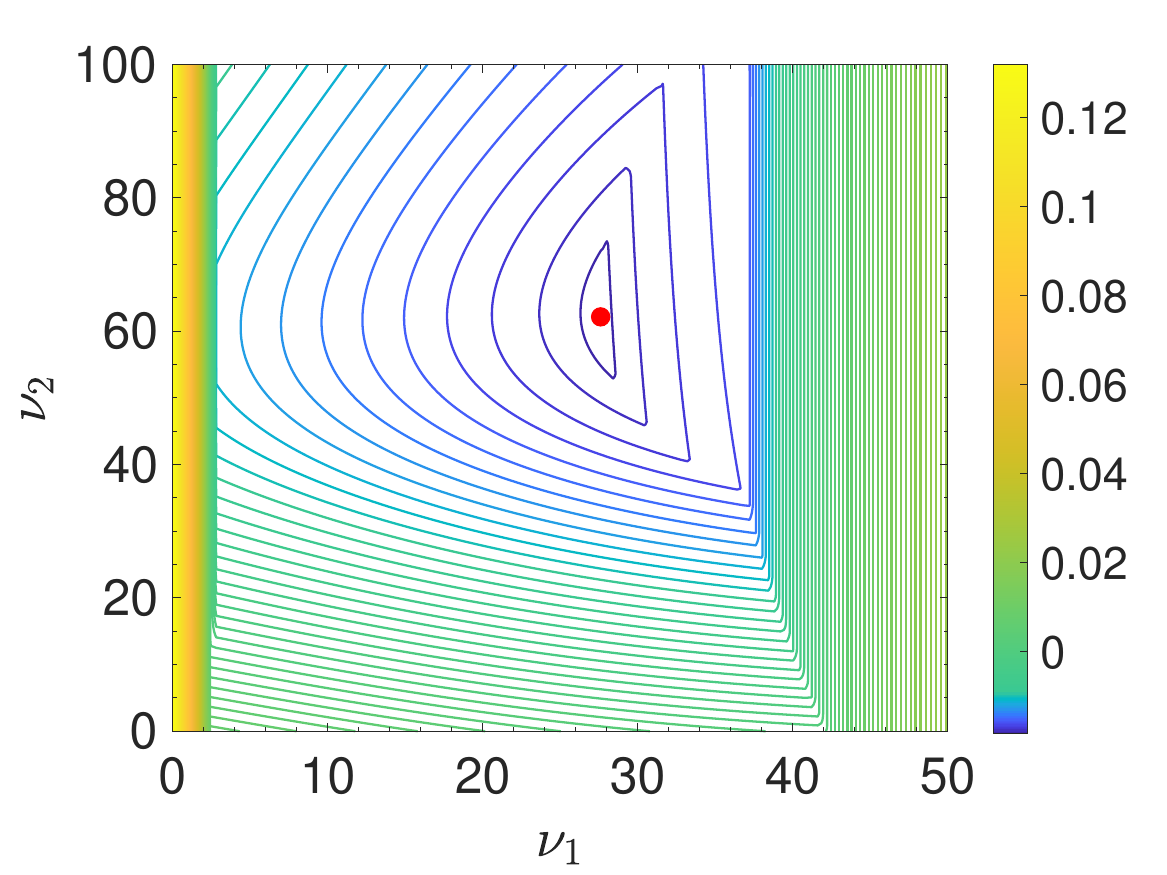} & 
			\hskip .7ex
			\includegraphics[width = .5\textwidth]{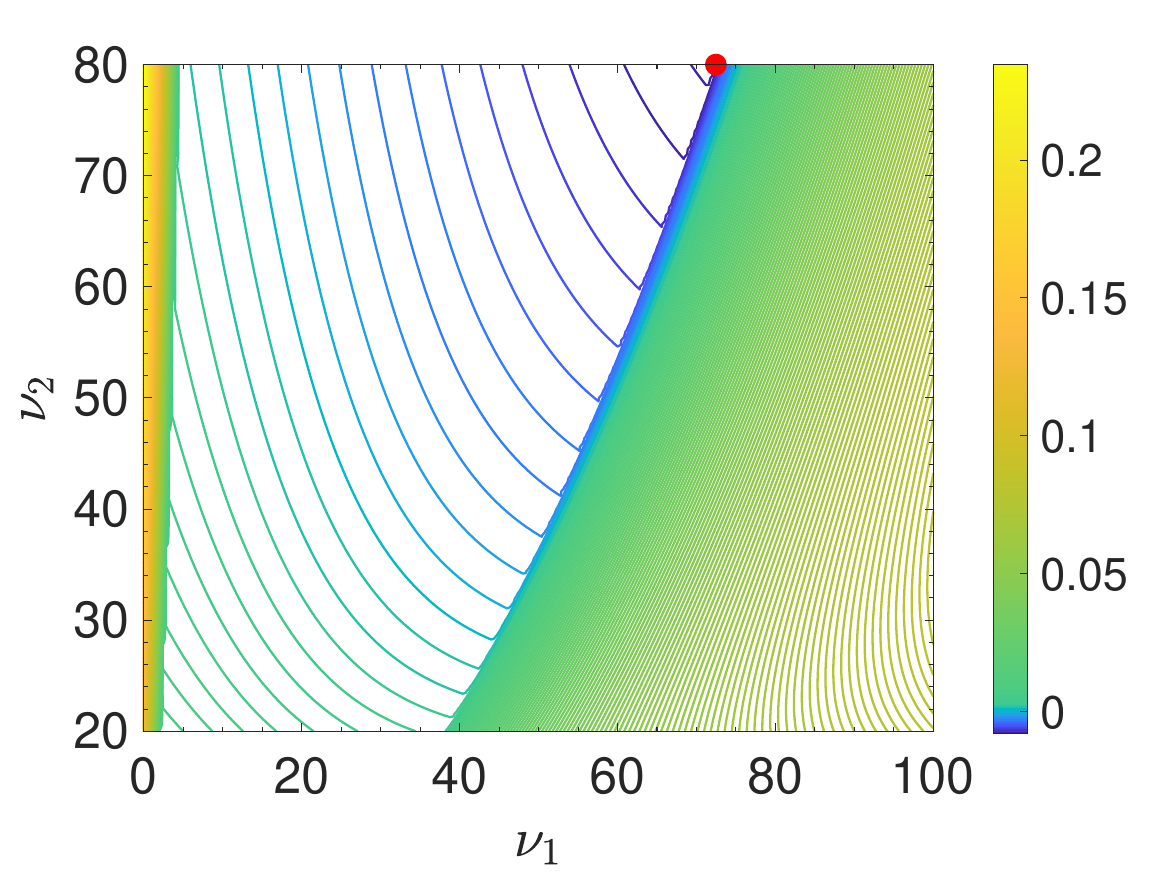}  	\\[-1em]
	\end{tabular}
		\caption{Contour diagrams of the objective function
		$\nu = (\nu_1 , \nu_2) \mapsto \alpha_{\epsilon}(\nu)$ for the
		two damping optimization problems in Example~\ref{ex:mul_par}
		depending on two parameters.
		In each plot, the red dot marks the computed global minimizer.
		Left and right plots correspond to first and second damping problems, respectively,
		considered in Example~\ref{ex:mul_par}.
		}
		\label{fig:2parameter}
\end{figure}

\begin{table}
	\begin{center}
	\begin{tabular}{|c|c|c|c|c|c|c|}
        \hline
        		$\nu_\ast$		&	$\alpha_{\epsilon}(\nu_\ast)$  	&	
			$\:$ iter $\:$		&	$\:$ total $\:$		&	reduced  &  	$\:$ psa $\:$	&	inner	\\
	\hline
	\hline
		(27.60, 62.16)		&	 -0.01865		&	5	&	54.7		&	54.3	&	0.4	&	4, 6		\\
	\hline	
	\end{tabular}
	\end{center}
		\caption{ Damping optimization problem with two dampers
		in Example \ref{ex:mul_par}.
		The computed global minimizers, globally smallest value of $\alpha_{\epsilon}(\nu)$ 
		by Algorithm~\ref{alg:sub_fw_min}, as well as the number of outer and inner
		subspace iterations, runtimes of Algorithm~\ref{alg:sub_fw_min} in seconds
		are listed as in Table \ref{tab:sf_large}.}
	\label{tab:2param_1}
\end{table}

In the second problem, the mass, damping, stiffness matrices are again $20\times 20$
but now given by
\[
	M
		=
	\mathrm{diag}(     
	1,\dots, 20)	\:	,	\;\,
	C(\nu_1 , \nu_2)
		=
	C_{\mathrm{int}}(\nu_2)	\,	+	\,	\nu_1 e_2 e_2^T	
		\:	,	\;\,
	K(\nu_2)
		=
	\nu_2
	\mathrm{tridiag}(-1,2,-1)
\]
with 
$C_{\mathrm{int}}(\nu_2) \, = \,  \, 2 \xi M^{1/2} \sqrt{M^{-1/2} K(\nu_2) M^{-1/2}} M^{1/2}, \: \xi = 0.005 \:$.
Thus, now there is only one damper on the second mass, whose viscosity $\nu_1$ is an optimization parameter.
Additionally, the spring constant $\nu_2$ affecting the stiffness matrix
is the second optimization parameter. We minimize
$\alpha_{\epsilon}(\nu)$ for $\epsilon = 0.05$ and $(w_m , w_c , w_k) = (1, 1, 1)$ over 
$\nu = (\nu_1, \nu_2) \: \in \: [0,100] \times [20,80]$ using Algorithm~\ref{alg:sub_fw_min}.
The computed global minimizer is $\nu_\ast = (72.4622, 80)$. The right-hand plot
in Figure \ref{fig:2parameter} illustrates the accuracy of this computed minimizer.
Additionally, quantities related to the application of Algorithm~\ref{alg:sub_fw_min} are provided in
Table \ref{tab:2param_2}. The computed globally smallest value of the $\epsilon$-pseudospectral
abscissa is now $\alpha_{\epsilon}(\nu_\ast) = -0.0081$, and again 
indicates uniform asymptotic stability
in an $\epsilon$ neighborhood of the optimal system,
unlike the minimal value $0.0020$ when minimization is performed only with respect to the
damping parameter on the second mass.


\begin{table}
	\begin{center}
	\begin{tabular}{|c|c|c|c|c|c|c|}
        \hline
        		$\nu_\ast$		&	$\alpha_{\epsilon}(\nu_\ast)$  	&	
				$\:$ iter $\:$		&	$\:$ total $\:$		&	reduced  &  	$\:$ psa $\:$		&	inner		\\
	\hline
	\hline
		(72.46, 80)		&	 -0.00805		&	3	&	15.1		&	14.8	 &	0.3	&	4, 6			\\
	\hline	
	\end{tabular}
	\end{center}
		\caption{ This table is similar to Table \ref{tab:2param_1} but for the application
				of Algorithm~\ref{alg:sub_fw_min}
				to the second damping problem in Example \ref{ex:mul_par}
				with the damping viscosity and the spring constant as parameters. }
	\label{tab:2param_2}
\end{table}


}
\end{example}

\section{Minimizing the pseudospectral abscissa of linearizations}\label{sec:psa_linearizations}  
In this section we discuss the minimization of the pseudospectral abscissa for the case 
when the quadratic eigenvalue problem is first linearized. 
Let us assume for this that the leading coefficient $M$ of the quadratic polynomial 
$P(\lambda) = \lambda^2 M + \lambda C + K$ is invertible. This is often the case
in real applications such as the damping optimization and
brake squealing problems; see, e.g. \cite{GraMQ2015,Tom23}. Then $P(\lambda) v = 0$ holds 
if and only if
\begin{equation}\label{eq:linearize}
	{\mathcal A}
	\left[
		\begin{array}{c}
			\lambda v	\\
			v
		\end{array}
	\right]
			=	
	\lambda
	\left [
		\begin{array}{c}
			\lambda v	\\
			v
		\end{array}
	\right]\:	,
	\;\:
	\text{where}	\;
	{\mathcal A}
		:=
	\left[
		\begin{array}{cc}
			-M^{-1}	&	0	\\
				0	&	I
		\end{array}
	\right]
	\left[
		\begin{array}{cc}
			C	&	K	\\
			I	&	0
		\end{array}
	\right]\in {\mathbb C}^{2\sn \times 2\sn}.
\end{equation}
This corresponds to the companion linearization of the polynomial $P(\lambda)$.
There are many alternative
linearizations considered in the literature (see, e.g. \cite{Higham2006,Mackey2006b,Mackey2006}).
We adopt the one above, but the ideas here are applicable to other linearizations as well. 

Clearly, the polynomial $P$ and the matrix ${\mathcal A}$ have the same set of
eigenvalues, and the relation between their eigenvectors is apparent from (\ref{eq:linearize}).
Hence, it seems plausible to consider the $\epsilon$-pseudospectrum of the matrix ${\mathcal A}$
given by \cite{Trefethen2005}
\[
	\Lambda^{\stan}_{\epsilon}({\mathcal A})	\:		:=	\:\;
	\bigcup_{\Delta \in {\mathbb C}^{2\sn \times 2\sn} \, , \; \| \Delta \|_2 \leq \epsilon} \, \Lambda({\mathcal A} + \Delta)	\\
				\quad	=	\quad
	\{ z \in {\mathbb C} \; | \;	\sigma_{\min}({\mathcal A} - zI)	\leq	\epsilon	\}	\:				
\]
with $\Lambda({\mathcal A} + \Delta)$ denoting the set of eigenvalue of the matrix 
${\mathcal A} + \Delta$, and the corresponding $\epsilon$-pseudospectral abscissa
\begin{equation*}
\begin{split}
	\alpha^{\stan}_{\epsilon}({\mathcal A})
		\:	&	:=	\:
	\max\{	\text{Re}(z)	\;	|	\;	z \in \Lambda^{\mathsf{st}}_{\epsilon}({\mathcal A})	\}		\\[.1em]
			&	\phantom{:}=	\;
	\max\{ 	\text{Re}(z)	\;	|	\;	z\in {\mathbb C}	\text{ s.t. }	
											\sigma_{\min}({\mathcal A} - zI)	\leq	\epsilon	\}	\:	.
\end{split}		
\end{equation*}

The next theorem relates $\Lambda_{\epsilon}(P)$ with 
$\Lambda^{\stan}_{\epsilon}({\mathcal A})$, as well as
$\alpha_{\epsilon}(P)$ with $\alpha^{\stan}_{\epsilon}({\mathcal A})$.
\begin{theorem}\label{thm:psa_poly_vs_stan}
Consider the quadratic matrix polynomial $P(\lambda)$ with $M$  invertible,
and let $\Lambda_{\epsilon}(P)$, $\alpha_{\epsilon}(P)$ be the $\epsilon$-pseudospectrum,
$\epsilon$-pseudospectral abscissa of $P$ with the weights $(w_m, w_c, w_k) = (0, 1, 1)$.
The following assertions hold for every $\epsilon > 0$: \\
1. $\Lambda^{\stan}_{\underline{\epsilon}}({\mathcal A})	\supseteq		\Lambda_{\epsilon}(P)$;$\:$
2. $\alpha^{\stan}_{\underline{\epsilon}}({\mathcal A})		\geq		\alpha_{\epsilon}(P)$,
$\:$where $\underline{\epsilon} := \epsilon\| M^{-1} \|_2 $.
\end{theorem}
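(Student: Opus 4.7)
The plan is to prove the inclusion in assertion~1 directly from the definition of $\Lambda_\epsilon(P)$ with weights $(0,1,1)$ by passing perturbations of the polynomial $P$ to perturbations of the companion matrix ${\mathcal A}$, and then deduce assertion~2 immediately by taking real parts.

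To carry out assertion~1, I would start with an arbitrary $\lambda \in \Lambda_\epsilon(P)$. By the definition in (\ref{eq:defn_psa}) with $(w_m,w_c,w_k)=(0,1,1)$, there exist perturbations $\Delta C, \Delta K \in {\mathbb C}^{\sn\times\sn}$ satisfying $\| [\Delta C \;\; \Delta K] \|_2 \leq \epsilon$ such that $\det\bigl(\lambda^2 M + \lambda(C+\Delta C) + (K+\Delta K)\bigr) = 0$. Since $M$ is invertible, the perturbed quadratic polynomial admits a companion linearization of the form in (\ref{eq:linearize}) whose eigenvalues coincide with the roots of the perturbed polynomial. Writing this linearization as $\widetilde{{\mathcal A}} = {\mathcal A} + \Delta{\mathcal A}$, a direct computation gives
\[
    \Delta {\mathcal A}
    \;=\;
    \begin{bmatrix}
        -M^{-1} \Delta C & -M^{-1} \Delta K \\
        0 & 0
    \end{bmatrix}.
\]
Thus $\lambda$ is an eigenvalue of ${\mathcal A} + \Delta{\mathcal A}$, and it only remains to estimate $\| \Delta{\mathcal A}\|_2$.

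For the norm estimate, I would use the observation that the zero block row does not change the spectral norm, so
\[
    \| \Delta {\mathcal A}\|_2
    \;=\;
    \| M^{-1}[\Delta C \;\; \Delta K] \|_2
    \;\leq\;
    \|M^{-1}\|_2 \cdot \|[\Delta C \;\; \Delta K]\|_2
    \;\leq\;
    \epsilon \| M^{-1}\|_2
    \;=\;
    \underline{\epsilon}.
\]
Hence $\lambda \in \Lambda^{\stan}_{\underline{\epsilon}}({\mathcal A})$, proving $\Lambda_\epsilon(P) \subseteq \Lambda^{\stan}_{\underline{\epsilon}}({\mathcal A})$.

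Assertion~2 is then an immediate consequence of assertion~1: taking the supremum of $\mre(z)$ over the smaller set $\Lambda_\epsilon(P)$ cannot exceed the supremum over the larger set $\Lambda^{\stan}_{\underline{\epsilon}}({\mathcal A})$. There is no real obstacle in this argument; the only step worth attention is the norm estimate, whose slack -- bounding $\|M^{-1}[\Delta C \; \Delta K]\|_2$ by $\|M^{-1}\|_2\|[\Delta C\;\Delta K]\|_2$ rather than treating the product directly -- is precisely what forces the factor $\|M^{-1}\|_2$ in the definition of $\underline{\epsilon}$, and explains why the two pseudospectra in general do not coincide at the same level $\epsilon$.
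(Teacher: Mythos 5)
Your proof is correct and follows essentially the same route as the paper: pass the admissible perturbation $(\Delta C,\Delta K)$ of $P$ to the rank-structured perturbation $\Delta{\mathcal A}$ of the companion matrix and bound $\|\Delta{\mathcal A}\|_2\leq\|M^{-1}\|_2\,\|[\Delta C\;\;\Delta K]\|_2\leq\underline{\epsilon}$, with assertion~2 following immediately from the inclusion. The only (harmless) difference is cosmetic: you write $\Delta{\mathcal A}$ explicitly blockwise and note that the zero block row leaves the spectral norm unchanged, whereas the paper keeps it as a product of two block matrices and bounds it by submultiplicativity.
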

\begin{proof} 
1. Suppose that  $z  \in \Lambda_{\epsilon}(P)$. Then there must exist $\Delta C, \Delta K$
	such that  $\| [ \Delta C \; \Delta K ]  \|_2 \leq \epsilon$ and
	$z$ is an eigenvalue $P(\lambda) + \Delta P(\lambda)$ for
	$\Delta P(\lambda) = \lambda \Delta C + \Delta K$.
	But then $z$ is also an eigenvalue of the matrix
\begin{equation*}
	\begin{split}
		&
		\left[
		\begin{array}{cc}
			-M^{-1}	&	0	\\
				0	&	I
		\end{array}
		\right]
		\left[
		\begin{array}{cc}
			C + \Delta C	&	K + \Delta K	\\
			I	&	0
		\end{array}
		\right]
					=	{\mathcal A}
			+	\Delta		\\
		&
		\quad\quad
		\text{for}	\quad
		\Delta :=
		\left[
		\begin{array}{cc}
			-M^{-1}	&	0	\\
				0	&	I
		\end{array}
		\right]
		\left[
		\begin{array}{cc}
			\Delta C	&	 \Delta K	\\
			0	&	0
		\end{array}
		\right]	\:	,
	\end{split}
	\end{equation*}
	where 
$
		\| \Delta \|_2	\;	\leq	\;	
		\| M^{-1} \|_2	\| [ \Delta C \; \Delta K ]  \|_2 	\;	\leq	\; 
		\epsilon\| M^{-1} \|_2 	\:	.
$
This shows that $z \in \Lambda^{\stan}_{\underline{\epsilon}}({\mathcal A})$
for $\underline{\epsilon} := \epsilon\| M^{-1} \|_2 $. Hence, the inclusion
$\Lambda^{\stan}_{\underline{\epsilon}}({\mathcal A})	\supseteq		\Lambda_{\epsilon}(P)$ holds.
	
\noindent	
2. This is immediate from the inclusion 
	$\Lambda^{\stan}_{\underline{\epsilon}}({\mathcal A})	\supseteq		\Lambda_{\epsilon}(P)$.
\end{proof}
Returning to the parameter-dependent matrix polynomial 
$P(\lambda ; \nu) = \lambda^2 M(\nu) + \lambda C(\nu) + K(\nu)$ under the assumption 
that $M(\nu)$ remains invertible for all $\nu \in \underline{\Omega}$, 
letting
\[
	{\mathcal A}(\nu)
		:=
	\left[
		\begin{array}{cc}
			-M(\nu)^{-1}	&	0	\\
				0	&	I
		\end{array}
	\right]
	\left[
		\begin{array}{cc}
			C(\nu)	&	K(\nu)	\\
			I	&	0
		\end{array}
	\right] \: ,
\]
one may alternatively minimize $\alpha^{\stan}_{\epsilon}({\mathcal A}(\nu))$ over $\nu \in \underline{\Omega}$
based on the stability considerations. This involves the minimization of the $\epsilon$-pseudospectral
abscissa of the parameter-dependent matrix ${\mathcal A}(\nu)$, so falls into the setting of 
\cite{AliM22, AliM22b}.

If the weights in the definitions of $\Lambda_{\epsilon}(P(\cdot;\nu))$
and $\alpha_{\epsilon}(P(\cdot;\nu))$ are chosen as  $(w_m, w_c, w_k) = (0, 1, 1)$,
and $M(\nu) = M$ is independent of the parameters $\nu$,
then, according to Theorem \ref{thm:psa_poly_vs_stan}, we have
\[
	\min_{\nu \in \underline{\Omega}} \:  \alpha_{\epsilon}(P(\cdot;\nu))
		\;\;	\leq		\;\;
	\min_{\nu \in \underline{\Omega}} \:	\alpha^{\stan}_{\underline{\epsilon}}({\mathcal A}(\nu))
\]
with $\underline{\epsilon} =\epsilon \| M^{-1} \|_2 $. However, the gap between the optimal
values of the two minimization problems above can be significant, and the corresponding
optimal parameter values can be considerably different; see in particular Example \ref{ex_linearization}
given next.

\begin{example}\label{ex_linearization}{\rm
For a comparison of $\alpha_{\epsilon}(P(\cdot;\nu))$ and  $\alpha^{\stan}_{\epsilon}({\mathcal A}(\nu))$,
consider the $20\times 20$ damping optimization problem in Example \ref{lar:ex1}
with $\epsilon = 0.05$ and 
the interval $\underline{\Omega} = [0,100]$. In the definition of $\alpha_{\epsilon}(P(\cdot;\nu))$,
the weights are chosen as $(w_m, w_c, w_k) = (0, 1, 1)$. Moreover, as $\| M^{-1} \|_2 = 1$,
we have $\underline{\epsilon} = \epsilon\| M^{-1} \|_2  = \epsilon$, so
it follows from Theorem \ref{thm:psa_poly_vs_stan} for that all $\nu\in \underline{\Omega}$ we have
$\alpha_{\epsilon}(P(\cdot;\nu)) \leq \alpha^{\stan}_{\epsilon}({\mathcal A}(\nu))$.

Minimizing $\alpha_{\epsilon}(P(\cdot;\nu))$ over $\nu \in \underline{\Omega}$ using Algorithm~\ref{alg:sub_fw_min} yields
$\nu_\ast = 66.42$ as its global minimizer, 
and $\alpha_{\epsilon}(P(\cdot;\nu_\ast)) = 0.0012$
as the globally smallest value of $\alpha_{\epsilon}(P(\cdot;\nu))$.
On the other hand, the minimization of $\alpha^{\stan}_{\epsilon}({\mathcal A}(\nu))$ over $\nu \in \underline{\Omega}$
using the framework in \cite{AliM22}, in particular the software in \cite{AliM22b}, leads to 
$\nu^{\stan}_\ast = 55.47$ as the global minimizer, 
and $\alpha^{\stan}_{\epsilon}({\mathcal A}(\nu^{\stan}_\ast)) = 0.1155$
as the globally smallest value of $\alpha^{\stan}_{\epsilon}({\mathcal A}(\nu))$.

The accuracy of these computed results is illustrated by the plots of 
$\alpha_{\epsilon}(P(\cdot;\nu))$ and  $\alpha^{\stan}_{\epsilon}({\mathcal A}(\nu))$
in Figure \ref{fig:compare_psa}. The computed global minimizers $\nu_\ast = 66.42$ and 
$\nu^{\stan}_\ast = 55.47$
are away from each other. Moreover, $\alpha^{\stan}_{\epsilon}({\mathcal A}(\nu))$
is considerably larger than $\alpha_{\epsilon}(P(\cdot;\nu))$ by a magnitude of at least ten globally
for all $\nu \in \underline{\Omega}$. A comparison of the spectral abscissa $\alpha(\nu_\ast) = -0.00844$
and $\alpha(\nu^{\stan}_\ast) = -0.00843$ at the computed global minimizers indicates that
the system $M \ddot{x}(t) + C(\nu) \dot{x}(t) + K x(t) = 0$ has slightly better asymptotic behavior 
at $\nu = \nu_\ast$ compared to $\nu = \nu^{\stan}_\ast$.

\begin{figure}
 \centering
	\begin{tabular}{cc}
		\hskip -1.2ex
			\includegraphics[width = .49\textwidth]{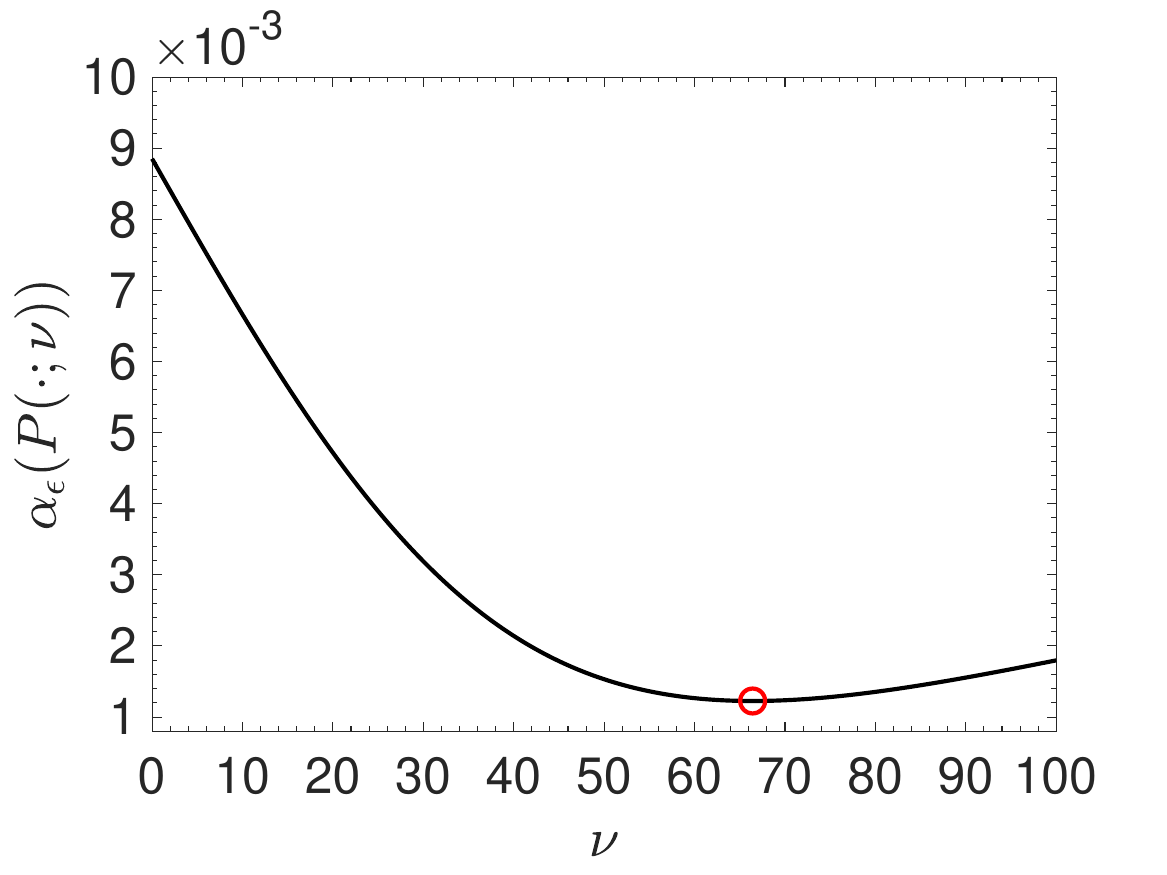} & 
			\includegraphics[width = .49\textwidth]{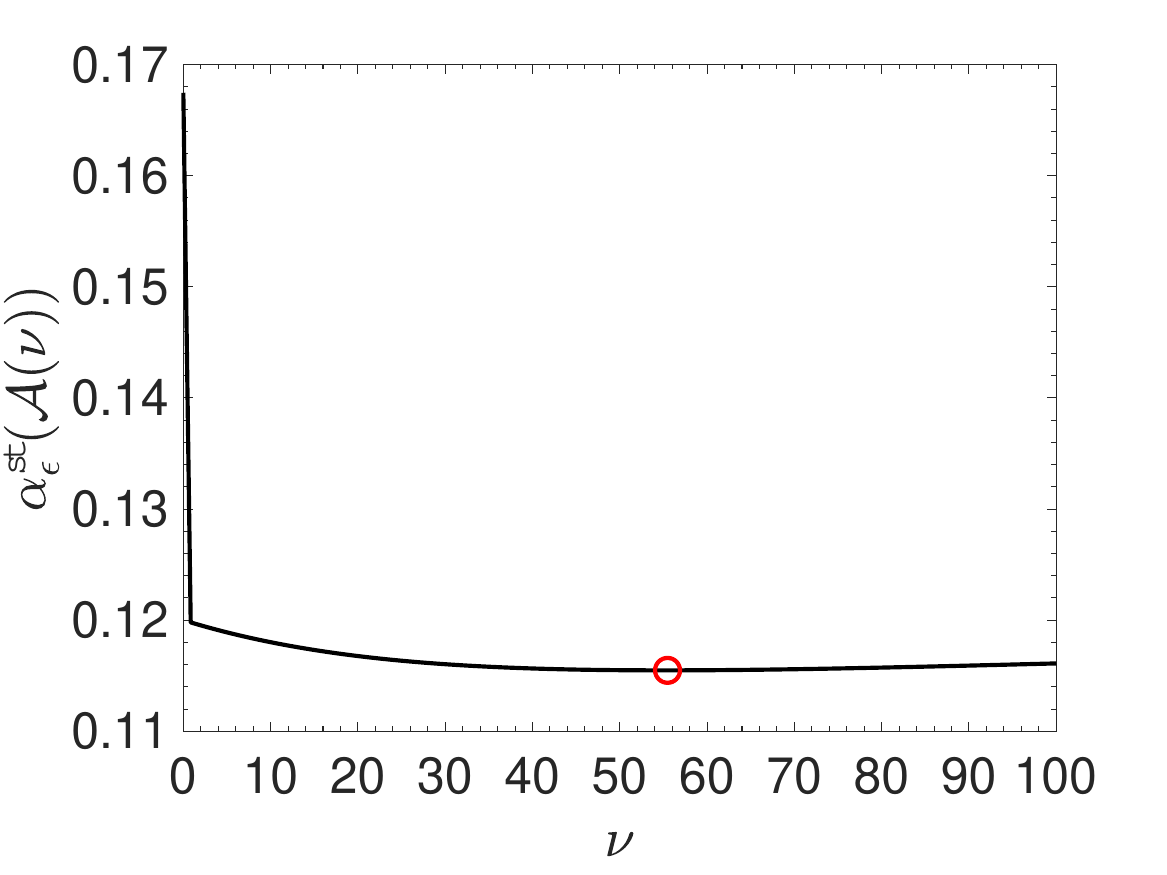} 
	\end{tabular}
		\caption{  Comparison of $\alpha_{\epsilon}(P(\cdot ; \nu))$
		with $\alpha_{\epsilon}^{\stan}({\mathcal A}(\nu))$ on the 
		damping optimization problem in Example \ref{lar:ex1}. In both plots, the red circle marks the
		computed global minimizer. Left Plot: $\alpha_{\epsilon}(P(\cdot ; \nu))$ as a function
		of $\nu$ for $(w_m , w_c , w_k) = (0, 1, 1)$.
		Right Plot: $\alpha_{\epsilon}^{\stan}({\mathcal A}(\nu))$ as a function of $\nu$. 
		}
		\label{fig:compare_psa}
\end{figure}
}
\end{example}

\section{Concluding Remarks}\label{sec:conclusion}
We have proposed numerical approaches for the minimization of the
$\epsilon$-pseudospecrtral abscissa $\alpha_{\epsilon}(P(\, \cdot \, ; \nu))$
over the parameters $\nu$ that belong to a compact set $\underline{\Omega} \in {\mathbb R}^{\sd}$
for a given parameter-dependent quadratic matrix polynomial $P(\, \cdot \, ; \nu)$ 
and for a prescribed $\epsilon > 0$. When $P(\, \cdot \, ; \nu)$ is of small size,
the minimization of $\alpha_{\epsilon}(P(\, \cdot \, ; \nu))$ is performed by means of a
global optimization technique that can cope with the nonsmoothness of 
$\alpha_{\epsilon}(P(\, \cdot \, ; \nu))$ (or alternatively with BFGS if there are
many optimization parameters) coupled with a globally convergent criss-cross
algorithm to compute the objective $\alpha_{\epsilon}(P(\, \cdot \, ; \nu))$.
Otherwise, when $P(\, \cdot \, ; \nu)$ is not small, it is minimized by applying
a subspace framework that operates on a restriction 
of $P(\, \cdot \, ; \nu)$ to a small subspace ${\mathcal V}$.
We have provided formal arguments that indicate the convergence of this framework
to a global minimizer of $\alpha_{\epsilon}(P(\, \cdot \, ; \nu))$. Numerical
experiments on damping optimization examples
show that the proposed methods can cope with 
the minimization of $\alpha_{\epsilon}(P(\cdot ; \nu))$
accurately and efficiently for problems of size at least a few hundreds in practice.

Efficient and reliable $\epsilon$-pseudospectral abscissa computations in the large-scale
setting are one of the main obstacles standing in the way to deal with even larger $\epsilon$-pseudospectral
abscissa minimization problems for quadratic eigenvalue problems, e.g. those related to the
robust stability of finite element discretizations of a disk brake \cite{GraMQ2015}.
The framework we rely on (i.e. Algorithm~\ref{alg:sub_fw}) for these larger $\epsilon$-pseudospectral
abscissa computations is a remedy, but still needs improvements to avoid local convergence
such as a clever initialization strategy, possibly making use of the perturbation theory for quadratic 
eigenvalue problems. 

\smallskip

\noindent
\textbf{Acknowledgements.} The authors are grateful to two anonymous referees
for their comments and suggestions, which helped the authors to improve the manuscript.



\bibliography{poly_pspa_refs}


\end{document}